\documentclass[numbook,envcountsame,smallextended]{svjourN}

\usepackage{pb-diagram,amsfonts,amsmath}

\newcommand\D{\mathbb{D}}
\newcommand\N{\mathbb{N}}
\newcommand\R{\mathbb{R}}
\newcommand\C{\mathbb{C}}

\renewcommand\H{\mathcal{H}}
\newcommand\T{\mathcal{T}}
\newcommand\M{\mathcal{M}}
\renewcommand\S{\mathcal{S}}

\newcommand\AT{\overline{\mathcal{T}}}
\newcommand\AM{\overline{\mathcal{M}}}
\renewcommand\P{\mathbb{P}}
\newcommand\Sphere{{\mathbb{S}^2}}

\newcommand{\id}{\mbox{\rm id}}
\renewcommand{\mod}{\mbox{\rm mod }}

\newcommand\mh{\hat{m}}
\renewcommand\th{\hat{\tau}}

\renewcommand\tt{\tilde{\tau}}
\newcommand{\hh}{\tilde{h}}
\newcommand{\sigmat}{\tilde{\sigma}}

\newcommand{\sm}{\setminus}
\newcommand{\eps}{\varepsilon}

\spnewtheorem*{proofof}{Proof}{\bf}{\rm}

\def\Teich{Teich\-m\"uller }

\begin{document}

\title{Thurston's pullback map on the augmented Teichm\"uller space and applications}
\author{Nikita Selinger}
\institute{Jacobs University Bremen, Campus Ring 1, 28759 Bremen, Germany\\\email{n.selinger@jacobs-university.de}}

\date{September 13, 2011}
\maketitle

\begin{abstract}

Let $f$ be a postcritically finite branched self-cover of a 
2-dimensional topological sphere. Such a map induces an analytic self-map 
$\sigma_f$ of a finite-dimensional Teichm\"uller space. We 
prove that this map extends continuously to the  
augmented Teichm\"uller space and give an explicit construction for 
this extension. This allows us to characterize the dynamics of 
Thurston's pullback map near invariant strata of the boundary of the 
augmented Teichm\"uller space. The resulting classification of 
invariant boundary strata is used to prove a conjecture by Pilgrim 
and to infer further properties of Thurston's pullback map. Our 
approach also yields new proofs of Thurston's theorem and Pilgrim's 
Canonical Obstruction theorem.
\end{abstract}

\tableofcontents

\section{Introduction}

     In the early 1980's Thurston proved one of the most
important theorems in the field of Complex Dynamics. His
characterization theorem provides a topological criterion of whether a given
combinatorics can be realized by a rational map, and also provides
a rigidity statement: two rational maps are equivalent if and only
if they are conjugate by a Moebius transformation.

     The original proof from \cite{DH} of Thurston's characterization theorem (Theorem~\ref{thm:Thurston}) 
     relates the original question to whether or not Thurston's pullback map on a \Teich space has a fixed point.
In \cite{DH} the authors study the behavior of Thurston's pullback map near infinity without specifying any structure at the boundary of the considered \Teich space. The question of finding the notion of the boundary for the \Teich space that would be appropriate for the problem is very natural, and was in the air since the first proof of the theorem came out.
     
     One obvious candidate to consider is the Thurston boundary which has been successfully used by Thurston to give a similar but considerably simpler proof of the characterization theorem for surface diffeomorphisms. The analytic self-map of the \Teich space investigated in this case extends continuously to the Thurston boundary of the \Teich space. This yields a continuous self-map of a topological ball. The proof then uses Brouwer's fixed point theorem as an essential ingredient.

     In the case when the \Teich space is one dimensional, Thurston's pullback map is a self-map of the unit disk $\D$. One immediately notices that for the map $f(z)=3z^2/(2z^3+1)$ considered in \cite{BEKP}, Thurston's pullback map can not be continued to a self-map of $\overline{\D}$ which is naturally homeomorphic to the Thurston compactification of the \Teich space. Note that in some cases, such an extension is possible. For instance, in the same article \cite{BEKP} the authors present an example by McMullen of branched covers with constant Thurston's pullback map; other examples are Latt\`es maps for which the pullback maps are automorphisms of $\D$. In Section~\ref{sec:ThurstonBoundary} we give a more conceptual construction of postcritically finite branched covers $f$ such that Thurston's pullback map does not extend to the Thurston boundary of the \Teich space (see Theorem~\ref{thm:noextension}).
     
    From our point of view, Theorem~\ref{thm:noextension} just says that the Thurston boundary is not the right boundary notion for the task. The next theorem has significantly more consequences and is in the heart of the whole article.
  
\begin{theorem} 
\label{thm:extensionA}
Thurston's pullback map extends continuously to a self-map of the augmented \Teich space.
\end{theorem}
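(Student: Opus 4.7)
The plan is to construct the extension $\bar\sigma_f\colon\AT\to\AT$ explicitly on boundary strata, then verify continuity at each boundary point. A point $\sigma$ of the boundary of $\AT$ corresponds to a noded Riemann surface obtained by pinching a multicurve $\Gamma$ on $(\Sphere,P_f)$. Given such a $\sigma$, I would define $\bar\sigma_f(\sigma)$ as follows: consider the full preimage $f^{-1}(\Gamma)$, discard the components that bound disks containing at most one marked point, and let $\Gamma'$ denote the resulting essential multicurve on the source sphere. The map $f$ then restricts to branched covers between the complementary components of $\Gamma'$ and those of $\Gamma$. Pulling back the complex structure on each piece of the target noded surface through these restrictions gives a noded Riemann surface on the source lying in the stratum indexed by $\Gamma'$. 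This is the candidate image. The marking is obtained from the given marking of the target via the combinatorics of how $f$ permutes and covers the pieces.

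Next I would check that $\bar\sigma_f$ is continuous. Let $\tau_n\in\T$ be a sequence converging to $\sigma$ in $\AT$, so that $\ell_{\tau_n}(\gamma)\to 0$ for $\gamma\in\Gamma$ while other essential curves stay at bounded length. Because $f$ is a finite-degree branched self-cover, the hyperbolic length of any component $\tilde\gamma$ of $f^{-1}(\gamma)$ in $\sigma_f(\tau_n)$ is controlled by the degree of $f|_{\tilde\gamma}$ times $\ell_{\tau_n}(\gamma)$; in particular it tends to $0$ precisely for the essential components of $f^{-1}(\Gamma)$, which are exactly the curves of $\Gamma'$. This verifies that the pinching pattern on the source converges to the one prescribed by the definition of $\bar\sigma_f(\sigma)$.

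The main work is to show that the conformal structures on the thick parts of $\sigma_f(\tau_n)$ converge to the ones appearing in $\bar\sigma_f(\sigma)$. Here I would invoke a Mumford-type compactness argument for noded surfaces together with normal-family techniques: on each component of the complement of $\Gamma'$, the restrictions $f_n$ of $f$ from $\sigma_f(\tau_n)$ to $\tau_n$ form a uniformly bounded family of holomorphic branched covers, so any subsequential limit is holomorphic and its image coincides with the limiting piece of the target noded surface. Uniqueness of Thurston's pullback then forces the subsequential limit to equal $\bar\sigma_f(\sigma)$. The technical obstacle — and the step I expect to be hardest — is controlling the behavior of $f$ near the degenerating curves, where critical points of $f$ may approach the forming nodes. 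I would handle this by combining the collar lemma with the local normal form $z\mapsto z^k$ of $f$ near a critical point, showing that the geometric annuli of $\sigma_f(\tau_n)$ around curves of $\Gamma'$ are mapped by $f$ into the collars of $\Gamma$ in $\tau_n$ by uniformly quasiconformal maps, so that no modulus is lost in the limit. Assembling the convergence of thick parts with the correct degeneration of lengths yields convergence $\sigma_f(\tau_n)\to\bar\sigma_f(\sigma)$ in $\AT$, which completes the proof.
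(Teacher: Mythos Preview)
Your approach is essentially the paper's explicit construction in Section~\ref{sec:extension} (culminating in Theorem~\ref{thm:Continuity}): define the extension stratum-by-stratum via pullback of complex structures on pieces, then verify continuity by tracking lengths of curves and convergence on thick parts. The paper carries this out by using the embedding $l\colon\AT_f\to[0,\infty]^{\H}$ (Proposition~\ref{prop:embedding}) to reduce continuity to $l(\gamma',\sigma_f(\tau_n))\to l(\gamma',\sigma_f(\tau))$ for each homotopy class, and it handles thick-part convergence through the normalized-projection argument of Proposition~\ref{prop:UniformConvergence} together with the hyperbolic-metric estimate of Proposition~\ref{prop:MetricEstimate}, rather than the quasiconformal collar control you sketch. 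Your worry about critical points drifting toward nodes is not really the crux; once the marked points are correctly normalized, the projections $p_n^{C'}$ converge uniformly to the identity and the hyperbolic metrics converge on compacta, which is all that is needed.

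However, the paper's \emph{primary} proof of Theorem~\ref{thm:extensionA} is much shorter and genuinely different: it shows (Proposition~\ref{prop:Lipschitz}) that $\|(d\sigma_f)^*\|_{WP}\le\sqrt{d_f}$, so $\sigma_f$ is Lipschitz for the Weil--Petersson metric; since by Masur's theorem (Theorem~\ref{thm:masur}) the augmented \Teich space is the Weil--Petersson completion of $\T_f$, the Lipschitz map extends continuously to the completion automatically. Your explicit construction is still needed --- the paper uses it too, to identify the abstract completion-extension with a concrete geometric map and to analyze the stratum dynamics --- but the bare existence and continuity of \emph{some} extension comes for free once the Lipschitz bound is established.
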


     The topology of the augmented \Teich space is by far more complicated than the topology of the  compactification of the \Teich space with the Thurston boundary. It is not compact or even locally compact, so that we can not apply tools like Brouwer's fixed point theorem. It is not true that Thurston's pullback map must always have a fixed point in the augmented \Teich space (the simplest counterexample is a Latt\`es map corresponding to a matrix with distinct real eigenvalues, see \cite{DH} for a definition), however, this is true in many cases (see Theorem~\ref{thm:fixedpts}).
     
     In Section~\ref{sec:extension} we define the extension to the boundary of the augmented \Teich space explicitly in a way that is similar to the definition of the action of Thurston's pullback map on the \Teich space. This brings, in our opinion, new insights in understanding the behavior of Thurston's pullback map. In Section~\ref{sec:strata} we characterize the dynamics of Thurston's pullback map near invariant strata on the boundary of the augmented \Teich space. In Section~\ref{sec:proofs} we use the obtained classification to simplify the proofs of Thurston's theorem and Canonical Obstruction theorem due to Pilgrim (see Theorem~\ref{thm:CanonicalObstruction}).
     
     In Section~\ref{sec:Pilgrim}  
     an application of our approach is given: we prove a conjecture by Kevin Pilgrim \cite{P1} (see Theorem~\ref{thm:KevinConjecture}).

\section{Basic definitions}

The main setup is the same as in \cite{DH}.

Consider  an orientation-preserving branched self-cover $f$ of degree $d_f \ge2$ of the 2-dimensional topological sphere $\Sphere$. The \emph{critial set} $\Omega_f$ is the set of all points $z$ in $\Sphere$ where the local degree of $f$ is greater than 1. The \emph{postcritial set} $P_f$ is the union of all forward orbits of $\Omega_f$ : $P_f= \cup_{i\ge1} f^i(\Omega_f).$  A branched cover $f$ is called \emph{postcritially finite} or a \emph{Thurston map} if $P_f$ is finite. Denote $p_f= \#P_f$.

\begin{remark} 
\label{rk:GenDef} We can generalize the definition of Thurston maps as follows. A pair $(f,P)$ of a branched cover $f \colon \Sphere \to \Sphere$ and a finite set $P \subset \Sphere$ is called a Thurston map if $P$ is forward invariant and contains all critical values of $f$ (and, hence, contains $P_f$). All results discussed in the present article hold for this general definition of Thurston maps.
\end{remark}

Two Thurston maps $f$ and $g$ are \emph{Thurston equivalent} if and only if there exist two homeomorphisms $h_1,h_2\colon \Sphere \to \Sphere$ such that the diagram
\[
\begin{diagram}
\node{(\Sphere,P_f)} \arrow{e,t}{h_1} \arrow{s,l}{f} \node{(\Sphere,P_g)}
\arrow{s,l}{g}
\\
\node{(\Sphere,P_f)} \arrow{e,t}{h_2} \node{(\Sphere,P_g)}
\end{diagram}
\]
commutes, $h_1|_{P_f}=h_2|_{P_f}$, and
$h_1$ and $h_2$ are homotopic relative to $P_f$.

A simple closed curve $\gamma$ is called \emph{essential} if every component of $\Sphere \sm \gamma$ contains at least two points of $P_f$. We consider essential simple closed curves up to free homotopy in $\Sphere\sm P_f$. A \emph{multicurve} is a finite set of pairwise disjoint and non-homotopic essential simple closed curves. Denote by $f^{-1}(\Gamma)$ the multicurve consisting of all essential preimages of curves in $\Gamma$. A multicurve $\Gamma=(\gamma_1,\ldots,\gamma_n)$ is called \emph{invariant} if each
component of $f^{-1}(\gamma_i)$ is either non-essential, or it is
homotopic (in $\Sphere\sm P_f$) to a curve in $\Gamma$ (i.e. $f^{-1}(\Gamma) \subseteq \Gamma$).
We say that $\Gamma$ is \emph{completely invariant} if $f^{-1}(\Gamma) = \Gamma$. 

Every
invariant multicurve $\Gamma$ has its associated \emph{Thurston
matrix} $M_\Gamma=(m_{i,j})$ with
\[
m_{i,j}=\sum_{\gamma_{i,j,k}} (\deg f|_{\gamma_{i,j,k}}\colon
\gamma_{i,j,k}\to \gamma_j)^{-1}
\]
where $\gamma_{i,j,k}$ ranges  through all preimages of $\gamma_j$
that are homotopic to $\gamma_i$. Since all entries of $M_\Gamma$
are non-negative real, the leading eigenvalue $\lambda_\Gamma$ of
$M_\Gamma$ is real and non-negative. The multicurve $\Gamma$ is a
\emph{Thurston obstruction} if $\lambda_\Gamma\ge 1$.

We call $\Gamma$ a \emph{simple} obstruction (compare \cite{P}) if no permutation of the curves in $\Gamma$ puts $M_\Gamma$ in the block form
$$ M_\Gamma = \left( 
\begin{array}{cc}
	M_{11}  & 0 \\
	M_{21}  & M_{22}
\end{array}
 \right),
$$ 
where  the leading eigenvalue of $M_{11}$ is less than $1$. If such a permutation exists, it follows that $M_{22}$ is a Thurston matrix of an invariant multicurve with the same leading eigenvalue as $M_\Gamma$. It is thus evident that every obstruction contains a simple one. Every simple obstruction is automatically completely invariant. For two real vectors $a=(a_1,\ldots,a_n)^T$ and $b=(b_1,\ldots,b_n)^T$, we write $a \ge b$ (or $a > b$) if  $a_i \ge b_i$ (respectively, $a_i > b_i$) for all $i=\overline{1,n}$. The following is an exercise in linear algebra.

\begin{proposition}
\label{prop:positive}
  An invariant multicurve $\Gamma$ is a simple obstruction if and only if there exists a  vector $v>0$ such that $M_\Gamma v \ge v$.
\end{proposition}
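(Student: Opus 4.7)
The statement is an equivalence between a combinatorial structure of $\Gamma$ and a linear inequality for $M_\Gamma$; I will prove the two directions separately.

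\emph{Suppose there exists $v > 0$ with $M_\Gamma v \ge v$.} Pair the inequality with a nonnegative left Perron eigenvector $u$ of $M_\Gamma$ (guaranteed by Perron--Frobenius): $\lambda_\Gamma u^T v = u^T M_\Gamma v \ge u^T v > 0$, so $\lambda_\Gamma \ge 1$ and $\Gamma$ is an obstruction. If a permutation put $M_\Gamma$ in the block form of the definition with the leading eigenvalue of $M_{11}$ less than $1$, partitioning $v$ accordingly would give $M_{11} v_1 \ge v_1$ with $v_1 > 0$, and the same Perron pairing applied to $M_{11}$ would force its leading eigenvalue to be at least $1$, a contradiction. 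Hence $\Gamma$ is simple.

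\emph{Suppose $\Gamma$ is a simple obstruction.} I proceed by strong induction on the number of curves in $\Gamma$. Take $u \ge 0$, $u \ne 0$, with $M_\Gamma u = \lambda_\Gamma u$; this exists by Perron--Frobenius, and $\lambda_\Gamma \ge 1$. If $u > 0$ (which happens whenever $M_\Gamma$ is irreducible, in particular for a single curve), take $v := u$, so $M_\Gamma v = \lambda_\Gamma v \ge v$ and we are done. Otherwise let $Z := \{i : u_i = 0\}$ and $P := \{i : u_i > 0\}$, both nonempty. For $i \in Z$, the equation $(M_\Gamma u)_i = 0$ together with $m_{ij}, u_j \ge 0$ forces $m_{ij} = 0$ for every $j \in P$, so $Z$ is a proper nonempty forward-closed subset of $\Gamma$ realizing the block form of the definition. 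Simplicity descends to $M_\Gamma|_Z$: any proper nonempty $A \subsetneq Z$ forward-closed in $M_\Gamma|_Z$ is also forward-closed in $M_\Gamma$ (the extra vanishing from $A \subset Z$ to $P$ is free), so $\lambda(M_\Gamma|_A) \ge 1$ by simplicity of $\Gamma$; moreover $\lambda(M_\Gamma|_Z) \ge 1$ since $Z$ itself is such a subset of $\Gamma$. The inductive hypothesis applied to $M_\Gamma|_Z$ on the strictly smaller index set $Z$ yields $\tilde v > 0$ on $Z$ with $M_\Gamma|_Z \tilde v \ge \tilde v$. Define $v := u$ on $P$ and $v := \tilde v$ on $Z$, so $v > 0$. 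For $i \in Z$, forward-closedness gives $(M_\Gamma v)_i = (M_\Gamma|_Z \tilde v)_i \ge \tilde v_i = v_i$; for $i \in P$, dropping the nonnegative $Z$-contribution gives $(M_\Gamma v)_i \ge \sum_{j \in P} m_{ij} u_j = (M_\Gamma u)_i = \lambda_\Gamma u_i \ge u_i = v_i$, closing the induction.

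The main obstacle is that a Perron eigenvector of a reducible nonnegative matrix can have zero entries, so $u$ alone need not give the desired $v$. The key observation is that the zero set $Z$ is automatically forward-closed and that simplicity of $\Gamma$ descends to simplicity of $M_\Gamma|_Z$ on the strictly smaller index set; this makes the recursion well-founded and lets one glue $u|_P$ with the recursively produced $\tilde v$ on $Z$ into the required strictly positive $v$.
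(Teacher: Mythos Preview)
The paper does not actually prove this proposition; it is stated as ``an exercise in linear algebra'' and left to the reader. Your argument is a correct and complete solution to that exercise: the Perron pairing handles the easy direction cleanly, and for the converse your induction on the size of the index set is sound---the key points, that the zero set $Z$ of the Perron eigenvector is forward-closed and that simplicity descends to $M_\Gamma|_Z$, are both verified correctly, and the gluing $v = u|_P \oplus \tilde v|_Z$ checks out row by row.
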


Thurston's original characterization theorem is formulated as follows:

\begin{theorem}[Thurston's Theorem \cite{DH}]
\label{thm:Thurston}  A postcritically finite  bran\-ched cover
$f\colon\Sphere\to\Sphere$ with hyperbolic orbifold is either
Thurston-equivalent to a rational map $g$ (which is then
necessarily unique up to conjugation by a Moebius transformation), or
$f$ has a Thurston obstruction.
\end{theorem}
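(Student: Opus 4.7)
The plan is to reduce the theorem to a fixed-point problem for Thurston's pullback map $\sigma_f$ on $\T_f$, and then exploit the continuous extension to the augmented \Teich space $\AT_f$ guaranteed by Theorem~\ref{thm:extensionA} to analyze what happens when no fixed point exists. Recall that Thurston equivalence classes of rational maps realizing $f$ correspond to fixed points of $\sigma_f$ in $\T_f$, so the task is to show that either such a fixed point exists, or $f$ admits a Thurston obstruction.

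First I would fix a basepoint $\tau_0\in\T_f$ and consider the forward orbit $\{\sigma_f^n(\tau_0)\}_{n\ge 0}$. Two exhaustive cases arise. In the first, the orbit remains in a \Teich-bounded subset of $\T_f$. Under the hyperbolic orbifold hypothesis, $\sigma_f$ is weakly contracting for the \Teich metric (the classical Douady--Hubbard computation), so any accumulation point of the orbit in $\T_f$ must be a fixed point of $\sigma_f$, and the corresponding complex structure realizes $f$ by a rational map.

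In the second case, the orbit eventually leaves every \Teich-bounded subset of $\T_f$ and accumulates on $\partial\AT_f=\AT_f\sm\T_f$. Here I would use Theorem~\ref{thm:extensionA} together with the stratified structure of $\AT_f$: each point of $\partial\AT_f$ lies in a stratum $\S_\Gamma$ indexed by a multicurve $\Gamma$ on $(\Sphere, P_f)$. Because the set of multicurves on $(\Sphere, P_f)$ is combinatorially finite (up to the equivalence that labels strata), continuity of the extension of $\sigma_f$ to $\AT_f$ should allow me to pass to an iterate and to an accumulation point lying in a stratum $\S_{\Gamma^\ast}$ that is mapped into its own closure; refining $\Gamma^\ast$ if necessary produces an invariant multicurve together with an invariant stratum to which the orbit is attracted. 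At this point I would invoke the classification of the dynamics of the extension near invariant strata developed in Section~\ref{sec:strata}: the fact that an orbit in $\T_f$ accumulates on $\S_{\Gamma^\ast}$ forces the \Teich matrix $M_{\Gamma^\ast}$ to have leading eigenvalue at least one, so $\Gamma^\ast$ is a \Teich obstruction, which by Proposition~\ref{prop:positive} contains a simple one. Uniqueness of the realizing rational map up to Moebius conjugation then follows from the same weak contraction: two distinct fixed points of $\sigma_f$ would contradict the strict decrease of \Teich distance along iterates of $\sigma_f$ joining them.

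The main obstacle I expect is in the second case, specifically in promoting a generic accumulation point on $\partial\AT_f$ to one lying on a genuinely invariant stratum. A priori the orbit might wander through infinitely many boundary strata, so one must combine the continuity of the extension on all of $\AT_f$ with the finiteness of the combinatorial data of multicurves to pin the limit set down to a single periodic (hence, after replacing $f$ by an iterate, invariant) stratum. Once that is in place, the classification near invariant strata from Section~\ref{sec:strata} converts the statement ``the orbit escapes to infinity along $\S_{\Gamma^\ast}$'' directly into the eigenvalue condition $\lambda_{\Gamma^\ast}\ge 1$ that defines a \Teich obstruction.
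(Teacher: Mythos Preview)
Your overall plan matches the paper's, and your first case is essentially correct. The genuine gap is in the second case, and it is precisely the obstacle you flag but do not resolve. The augmented \Teich space $\AT_f$ is neither compact nor locally compact (the paper stresses this in the introduction), so an orbit leaving every bounded subset of $\T_f$ need not accumulate anywhere in $\AT_f$; continuity of the extension by itself cannot manufacture an accumulation point. The ``combinatorial finiteness'' you invoke is a finiteness of strata in the compactified \emph{moduli} space, not in $\AT_f$, where there are infinitely many multicurves and hence infinitely many strata $\S_\Gamma$. The paper's essential move, absent from your sketch, is to project the orbit through the intermediate cover of Proposition~\ref{prop:covers} to the \emph{compact} space $\AM'_f$, choose an accumulation point $m'$ there on a stratum $\S_{[\Gamma']}$ of \emph{minimal} dimension, and then argue---using the Lipschitz bound on $\log l(\gamma,\cdot)$ together with Proposition~\ref{prop:estimate}---that the short-curve multicurves $\Gamma_{n_k}$ seen along the orbit near $m'$ are completely invariant and, via the covering structure, all share the same Thurston matrix. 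The contradiction when $\lambda<1$ is then obtained by a backward-induction application of Proposition~\ref{prop:seminorm}, not by a direct appeal to the weakly-repelling classification: Corollary~\ref{cor:repelling} only says that each visit to a fixed neighbourhood eventually escapes, which does not by itself preclude infinitely many returns.

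In short, your outline correctly identifies the architecture (fixed point versus boundary accumulation, then Section~\ref{sec:strata}), but the passage from ``orbit escapes $\T_f$'' to ``orbit accumulates on an invariant boundary stratum'' requires the descent to $\AM'_f$ and the minimal-stratum and complete-invariance arguments that form the technical core of the paper's proof.
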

General rigorous definition of orbifolds and their Euler
characteristic can be found in \cite{M}. In our case, there is a unique and straightforward way to construct the minimal
function $v_f$ of all functions  $v:\Sphere \to \N \cup \{\infty\}$
satisfying the following two conditions:
\begin{enumerate}
 \item[(i)] $v(x)=1$ when $x \notin P_f$;
 \item[(ii)] $v(x)$ is divisible by $v(y) \deg_y f$ for all  $y \in f^{-1}(x)$.
\end{enumerate}
We say that $f$ has a hyperbolic orbifold $O_f=(\Sphere,v_f)$ if the
Euler characteristic of $O_f$
\[\chi(O_f)=2- \sum_{x \in P_f} \left( 1- \frac{1}{v_f(x)} \right) \] is less than 0, and a parabolic orbifold otherwise.

\section{\Teich space and Thurston iteration}
\label{sec:Teich}

Let $\T_f$ be the \Teich space modeled on the surface with marked points $(\Sphere,P_f)$ (see \cite[Section 2]{DH}) and $\M_f$  be the corresponding moduli space. 
 The space $\T_f$ can be defined as the quotient of the space of all diffeomorphisms from $(\Sphere,P_f)$ to the Riemann sphere $\P$ modulo post-composition with Moebius transformations and isotopies relative $P_f$. This is a $(p_f-3)$-dimensional complex manifold (in case $p_f \le 3$ it is a one point set). We write $\tau =\left\langle h \right\rangle$ if a point $\tau$ is represented by a diffeomorphism $h\colon (\Sphere,P_f) \to \P$. Correspondingly, points of $\M_f$ are represented by $h(P_f)$ modulo post-composition with Moebius transformations. Denote by $\pi:\T_f \to \M_f$ the canonical covering map which sends $h \mapsto h|_{P_f}$. The pure mapping class group of $(\Sphere,P_f)$ is canonically identified with the group of deck transformations of $\pi$. For more background on \Teich spaces see, for example, \cite{IT,H}.


Suppose $R$ is a  Riemann surface with marked points, denote by $P$ the set of marked points in $R$. If $R \sm P$ is hyperbolic,   we endow $R$ by default with the Poincar\'e metric of $R\sm P$ with constant curvature $-1$ .
 
The cotangent space at a point $\tau=\left\langle h \right\rangle$ in the \Teich space $T_f$ is canonically isomorphic to the space of all meromorphic quadratic differentials $Q(\P,h(P_f))$ on the Riemann surface with marked points corresponding to $\tau$ (which is $(\P,h(P_f))\;$) that are holomorphic on $\P \sm h(P_f)$ and have at worst simple poles in $h(P_f)$. The \Teich and Weil-Petersson norms for  $q \in Q(\P,h(P_f))$ are defined as follows:
$$ \|q\|_T = 2 \int_\P |q|$$ and 
$$ \|q\|_{WP} = \left( \int_\P \rho^{-2}|q|^2\right)^{1/2},$$
where $\rho$ is the hyperbolic distance element of $(\P,h(P_f))$. The duals of these two norms define Finsler metrics on $\T_f$ (the metric defined by the Weil-Petersson norm is not only Finsler but Hermitian). We write $d_T(\cdot,\cdot)$ and $d_{WP}(\cdot,\cdot)$ for the distances between points in the \Teich space with respect to the corresponding metric.

Consider an essential simple closed curve $\gamma$ in $(\Sphere,P_f)$. For each complex structure $\tau$ on $(\Sphere,P_f)$, there exists a unique geodesic $\gamma_\tau$ on $(\Sphere,P_f)$ in the homotopy class of $\gamma$ relative $P_f$.  We denote by $l(\gamma,\tau)$ the length of the geodesic $\gamma_\tau$ homotopic to $\gamma$ on the Riemann surface corresponding to $\tau \in \T_f$. This defines a continuous function from $\T_f$ to $\R_+$ for any given $\gamma$. Moreover, $\log l(\gamma,\tau)$ is a Lipschitz function with Lipschitz constant $1$ with respect to the \Teich metric (see \cite[Theorem 7.6.4]{H}; note that in \cite[Proposition 7.2]{DH} the constant is 2 because of a different normalization of the \Teich metric). We will use the same notation $l(\gamma, R)$ for the hyperbolic length of the geodesic homotopic to a  curve $\gamma$ in a hyperbolic surface $R$.

The length of a simple closed geodesic $\gamma$ on a hyperbolic Riemann surface $R$ is closely related to the supremum $M(\gamma,R)$ of moduli of all annuli on $R$ that are homotopic to this geodesic. The Collaring Lemma  \cite[Proposition~6.1]{DH} provides one estimate: $M(\gamma,R) > \frac{\pi}{l}-1$. The estimate $M(\gamma,R) \le \frac{\pi}{l}$ is trivial since the core curve of an annulus of modulus $m$ has length at most $\frac{\pi}{m}$.

 We will measure distances with respect to both the \Teich and the Weil-Petersson metrics in order to prove Theorem~\ref{thm:KevinConjecture}. For this we will need the following estimates \cite[Proposition 2.4 and Theorem 4.4]{McM}, which state that away from the boundary of the \Teich space, the two metrics are comparable. 

\begin{proposition}
\label{prop:metriccompairson}
\begin{itemize}
	\item[i.] There exists a constant $C^0$ such that for any tangent vector $v$ to $\T_f$ we have
		$$\|v\|_{WP} \le C^0 \|v\|_T.$$
	\item[ii.] If $l(\gamma,\tau)>\eps$ for all essential simple closed curves $\gamma$ then for any tangent vector $v$ to $\T_f$ at $\tau$ we have
		$$\|v\|_{WP} \ge C^1(\eps) \|v\|_T$$	
		where $C^1(\eps)$ is a constant depending on $\eps$.
\end{itemize}
\end{proposition}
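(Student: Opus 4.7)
The plan is to reduce both inequalities to dual statements on the cotangent space of meromorphic quadratic differentials. Writing $q=\phi(z)\,dz^2\in Q(\P,h(P_f))$ and introducing the well-defined function $F(z)=\rho(z)^{-2}|\phi(z)|$ on $(\P,h(P_f))$, one has
$$\tfrac{1}{2}\|q\|_T=\int_{\P} F\,dA \quad\text{and}\quad \|q\|_{WP}^2=\int_{\P} F^2\,dA,$$
where $dA=\rho^2\,dxdy$ is the hyperbolic area form. Thus the Teichm\"uller and Weil--Petersson cotangent norms are the $L^1$ and $L^2$ norms of $F$ with respect to the hyperbolic area, and the problem is to compare these. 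Standard duality then converts an inequality between cotangent norms into the reverse inequality between tangent norms: $\|q\|_T\le K\|q\|_{WP}$ for every $q$ gives $\|v\|_{WP}\le K\|v\|_T$ for every tangent vector $v$, and symmetrically.

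For part (i), Cauchy--Schwarz yields $\int F\,dA\le(\int dA)^{1/2}(\int F^2\,dA)^{1/2}$. By Gauss--Bonnet the hyperbolic area of the $p_f$-times punctured sphere equals $2\pi(p_f-2)$, a constant independent of $\tau\in\T_f$, so $\|q\|_T\le 2\sqrt{2\pi(p_f-2)}\,\|q\|_{WP}$. Dualizing gives the claim with $C^0=2\sqrt{2\pi(p_f-2)}$.

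For part (ii), I will prove the reverse comparison $\|q\|_{WP}\le K(\eps)\|q\|_T$. Writing $\|q\|_{WP}^2=\int F\cdot F\,dA\le\sup_z F(z)\cdot\int F\,dA=\tfrac{1}{2}\sup_z F(z)\cdot\|q\|_T$, it suffices to establish the pointwise bound $\sup_z F(z)\le K'(\eps)\|q\|_T$. The hypothesis $l(\gamma,\tau)>\eps$ for all essential simple closed curves $\gamma$, combined with the Collaring Lemma, implies that the injectivity radius of $(\P\sm h(P_f))$ is bounded below by some $r(\eps)>0$ at every point $z$ outside a system of standard cusp horoballs around the punctures, whose geometry depends only on $\eps$. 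At such a thick-part point $z$ there is an embedded hyperbolic ball $B(z,r(\eps))$ on which $\rho$ is comparable to $\rho(z)$; applying the mean-value inequality for the holomorphic function $\phi$ on $B(z,r(\eps))$ pulled back to the universal cover gives $|\phi(z)|\le c\,\rho(z)^2\int_{B}|\phi|\,dxdy\le c\,\rho(z)^2\cdot\tfrac{1}{2}\|q\|_T$, whence $F(z)\le K'(\eps)\|q\|_T$. In each cusp horoball, the same estimate follows in the standard coordinate $w$ near a puncture, where $\rho\,|dw|\asymp(|w|\,|\log|w||)^{-1}|dw|$ and $\phi=\psi(w)\,dw^2/w^2$ with $\psi$ holomorphic and bounded by $\|q\|_T$ times a universal constant, by direct computation.

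The main obstacle is producing the pointwise bound with a constant $K'(\eps)$ that is \emph{uniform in} $\tau$, and glueing the thick-part estimate to the cusp estimate across the horoball boundaries. Uniformity fails precisely when some simple closed geodesic becomes arbitrarily short, since such a geodesic creates an ever-longer thin collar whose injectivity radius degenerates and on which the mean-value argument loses control; the hypothesis $l(\gamma,\tau)>\eps$ is exactly what excludes this failure. Once this uniform pointwise bound is established, parts (i) and (ii) both follow by the duality and elementary $L^p$-inequalities outlined above.
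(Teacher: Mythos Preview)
The paper does not actually prove this proposition: it is quoted verbatim from McMullen (cited as \cite[Proposition~2.4 and Theorem~4.4]{McM}), so there is no in-paper argument to compare against. Your proof is the standard one and is correct in outline: part~(i) is exactly Cauchy--Schwarz plus Gauss--Bonnet, and part~(ii) is the usual $L^\infty$-$L^1$ comparison for the density $\rho^{-2}|\phi|$, obtained from the subharmonic mean-value inequality on embedded balls in the thick part and a direct local computation in the universal cusp model.

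One small correction: since $q\in Q(\P,h(P_f))$ has at worst \emph{simple} poles at the punctures, the local model near a puncture should read $q=\psi(w)\,dw^2/w$ with $\psi$ holomorphic, not $dw^2/w^2$. With the correct order, $\rho^{-2}|\phi|\asymp |w|\,(\log|w|)^2\,|\psi(w)|\to 0$ at the puncture, and the cusp estimate is immediate and uniform because the cusp geometry is universal (independent of~$\tau$). The ``glueing'' you flag as an obstacle is in fact routine: one fixes once and for all the horoball where the boundary horocycle has a definite length (say~$1$), so that on its boundary the thick-part bound already applies, and inside the horoball the explicit cusp computation takes over. The only place uniformity in~$\tau$ could fail is the thick-part mean-value step, and that is precisely what the hypothesis $l(\gamma,\tau)>\eps$ controls via the Collaring Lemma, as you say.
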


We define the key player in the proof of Theorem~\ref{thm:Thurston} --- the Thurston pullback $\sigma_f$ --- as follows. Suppose $\tau \in \T_f$ is represented by a homeomorphism $h_\tau$. Consider the following diagram:

\begin{equation}
\begin{diagram}
\node{(\Sphere,P_f)}  \arrow{s,l}{f} 
\\
\node{(\Sphere,P_f)} \arrow{e,t}{h_\tau} \node{(\P,h_\tau(P_f))}
\end{diagram}
\label{dg1}
\end{equation}

We can pull back the standard complex structure $\mu_0$ on $\P$ to an almost-complex structure $f^* h_\tau^* \mu_0$ on $(\Sphere,P_f)$. By the Measurable Riemann mapping theorem  \cite{AB}, it induces a complex structure on $(\Sphere,P_f)$. Let $h_1$ be a conformal isomorphism between $(\Sphere,P_f)$ endowed with the complex structure $f^* h_\tau^* \mu_0$ and $\P$. Set $\sigma_f(\tau)=\tau_1$ where $\tau_1$ is the point represented by $h_1$.

Now we can complete the previous diagram by setting $f_\tau=h_\tau \circ f \circ h_1^{-1}$ so that it commutes:

\begin{equation}
\begin{diagram}
\node{(\Sphere,P_f)} \arrow{e,t}{h_1} \arrow{s,l}{f} \node{(\P,h_1(P_f))}
\arrow{s,l}{f_\tau}
\\
\node{(\Sphere,P_f)} \arrow{e,t}{h_\tau} \node{(\P,h_\tau(P_f))}
\end{diagram}
\label{dg2}
\end{equation}

Note that from definition of $f_\tau$, it follows that $f_\tau$ respects the standard complex structure $\mu_0$ and, hence, is rational. When we choose a representing homeomorphism $h_\tau$, we have the freedom to post-compose $h_\tau$ with any Moebius transformation; similairly, the choice of $h_\tau$ defines $h_1$ up to a post-composition by Moebius transformation. Thus, $f_\tau$ is defined up to pre- and post-composition by Moebius transformations.

It has been shown in \cite{DH} that $\sigma_f$ is a holomorphic self-map of $\T_f$ and that the co-derivative of $\sigma_f$  satisfies $(d\sigma_f(\tau))^*=(f_\tau)_*$ where $(f_\tau)_*$ is the push-forward operator on quadratic differentials. It is straightforward to prove $\|d\sigma_f\|_T=\|(d\sigma_f)^*\|_T \le 1$, and with a little more effort one gets $\|(d\sigma_f^k)\|_T < 1$, for some $k\in \N$,  when $f$ has hyperbolic orbifold (see \cite{TanLei}),
hence, $\sigma_f$ is weakly contracting on $\T_f$ with respect to the \Teich metric for any such $f$. Since $\T_f$ is path-connected, it follows that $\sigma_f$ has at most one fixed point if $f$ has hyperbolic orbifold, and every forward orbit of $\sigma_f$ converges to the fixed point in the case there exists one.

The following proposition \cite[Proposition 2.3]{DH} relates dynamical properties of $\sigma_f$ to the original question.

\begin{proposition}
\label{prop:fixedpts}
A Thurston map $f$ is equivalent to a rational function if and only if $\sigma_f$ has a fixed point.
\end{proposition}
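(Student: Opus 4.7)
\bigskip

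\noindent\textbf{Proof plan for Proposition~\ref{prop:fixedpts}.} The plan is to translate the statement directly through the definitions of $\sigma_f$ (diagrams \eqref{dg1} and \eqref{dg2}) and of Thurston equivalence. The key observation that will be used in both directions is that, in diagram~\eqref{dg2}, the map $f_\tau = h_\tau \circ f \circ h_1^{-1}$ is \emph{always} rational, because by construction $h_1$ conjugates $f^*h_\tau^*\mu_0$ to the standard structure $\mu_0$, so $f_\tau$ pulls $\mu_0$ back to itself.

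For the ``if'' direction, suppose $\sigma_f(\tau)=\tau$ with $\tau=\langle h_\tau\rangle$ and $\sigma_f(\tau)=\langle h_1\rangle$ as in diagram~\eqref{dg2}. Since $h_1$ and $h_\tau$ represent the same point of $\T_f$, there is a Moebius transformation $M$ such that $M\circ h_1$ is isotopic to $h_\tau$ relative to $P_f$. Replacing $h_1$ by $M\circ h_1$ (equivalently, post-composing $f_\tau$ with $M^{-1}$, which keeps it rational), diagram~\eqref{dg2} exhibits a commuting square
\[
\begin{diagram}
\node{(\Sphere,P_f)} \arrow{e,t}{h_1} \arrow{s,l}{f} \node{(\P,h_1(P_f))} \arrow{s,l}{f_\tau} \\
\node{(\Sphere,P_f)} \arrow{e,t}{h_\tau} \node{(\P,h_\tau(P_f))}
\end{diagram}
\]
with $h_1$ and $h_\tau$ isotopic rel $P_f$, and in particular agreeing on $P_f$. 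Thus $(h_1,h_\tau)$ realises a Thurston equivalence between $f$ and the rational map $f_\tau$.

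For the ``only if'' direction, assume $f$ is Thurston equivalent to a rational map $g$ via homeomorphisms $h_1,h_2\colon(\Sphere,P_f)\to(\P,P_g)$ with $g\circ h_1=h_2\circ f$ and $h_1\simeq h_2$ rel $P_f$. Set $\tau=\langle h_2\rangle\in\T_f$ and apply the pullback construction with $h_\tau=h_2$. Because $g$ is holomorphic, $g^*\mu_0=\mu_0$, and hence
\[
f^*h_2^*\mu_0=(h_2\circ f)^*\mu_0=(g\circ h_1)^*\mu_0=h_1^*g^*\mu_0=h_1^*\mu_0.
\]
So $h_1$ is itself a conformal isomorphism from $(\Sphere,P_f)$ with the pulled-back structure $f^*h_2^*\mu_0$ to $\P$, which means we may take it as the map ``$h_1$'' in diagram~\eqref{dg2} and conclude $\sigma_f(\tau)=\langle h_1\rangle$. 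Since $h_1$ and $h_2$ are isotopic rel $P_f$, they represent the same point of $\T_f$, giving $\sigma_f(\tau)=\tau$.

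Both implications are essentially bookkeeping once the correct diagrams are matched up; the only mild subtlety is the quotient by Moebius transformations and isotopies rel $P_f$ built into the definition of $\T_f$, which is precisely what allows ``fixed point of $\sigma_f$'' to be upgraded from an equality of complex structures to a genuine conjugacy of topological dynamics by a rational map. I do not expect any serious obstacle.
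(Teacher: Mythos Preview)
Your argument is correct and is precisely the standard proof. Note that the paper does not actually supply its own proof of this proposition: it is quoted as \cite[Proposition~2.3]{DH} and used as input. Your write-up is essentially the Douady--Hubbard argument, so there is nothing to compare. One tiny wording slip: when you replace $h_1$ by $M\circ h_1$, the new rational map is $f_\tau\circ M^{-1}$, i.e.\ $f_\tau$ \emph{pre}-composed with $M^{-1}$, not post-composed; this does not affect the argument.
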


Moreover, it is not hard to see that non-conjugate rational functions that are Thurston equivalent to $f$ would correspond to  different fixed points of $\sigma_f$. Therefore, the uniqueness part of Theorem~\ref{thm:Thurston} is clear.

The \emph{canonical} obstruction $\Gamma_f$ is the set of all homotopy classes of curves $\gamma$ that satisfy $l(\gamma,\sigma_f^n(\tau)) \to 0$ for all (or, equivalently, for some) $\tau \in \T_f$. The following theorems are due to Kevin Pilgrim \cite{P}. We give alternative proofs of these statements below.
\begin{theorem}[Canonical Obstruction Theorem]
\label{thm:CanonicalObstruction}
\begin{itemize}
\item[(1)] If for a Thurston map with hyperbolic orbifold its canonical obstruction is empty then it is Thurston equivalent to a rational function.
\item[(2)]  If the canonical obstruction is not empty then it is a simple Thurston obstruction.
\end{itemize}
\end{theorem}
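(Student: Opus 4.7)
The plan is to leverage the continuous extension $\overline{\sigma}_f \colon \AT_f \to \AT_f$ provided by Theorem~\ref{thm:extensionA}, together with the classification of the dynamics near invariant boundary strata developed in Section~\ref{sec:strata}, to handle both parts in parallel. The guiding heuristic is that the canonical obstruction $\Gamma_f$ is precisely the pinching locus of the limit of the orbit in $\AT_f$, so its existence and nature are controlled by the boundary behaviour of $\overline{\sigma}_f$.

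For part~(1), assume $\Gamma_f = \emptyset$. By definition, no essential closed curve has length tending to zero along the orbit $(\sigma_f^n(\tau))$, so by Mumford's compactness criterion the image of the orbit in $\M_f$ stays in a compact set and the orbit itself accumulates at some $\tau_\infty \in \T_f$. Since $\sigma_f$ is weakly contracting, the sequence $d_n := d_T(\sigma_f^n(\tau), \sigma_f^{n+1}(\tau))$ is non-increasing and converges to some $L \ge 0$. Continuity of $\overline{\sigma}_f$ then forces $d_T(\overline{\sigma}_f^j(\tau_\infty), \overline{\sigma}_f^{j+1}(\tau_\infty)) = L$ for every $j \ge 0$. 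Invoking the strict contraction of some iterate $\sigma_f^k$ at $\tau_\infty$, which is guaranteed by the hyperbolic orbifold hypothesis via \cite{TanLei}, collapses $L$ to $0$, so $\tau_\infty$ is a fixed point of $\sigma_f$, and Proposition~\ref{prop:fixedpts} identifies $f$ with a rational map.

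For part~(2), assume $\Gamma_f \neq \emptyset$. I would first verify that $\Gamma_f$ is a multicurve: any two classes in $\Gamma_f$ have representatives of arbitrarily small length on some common $\sigma_f^n(\tau)$, so the Collar Lemma forces them to be disjoint in homotopy, and the total number is bounded by $p_f-3$. Complete invariance then follows from diagram~(\ref{dg2}): if $\gamma \in \Gamma_f$ and $\gamma'$ is an essential component of $f^{-1}(\gamma)$ on which $f$ has degree $d$, the pull-back of any annulus around $\gamma$ yields an annulus around $\gamma'$ of modulus at least $M(\gamma, \sigma_f^n(\tau))/d$, whence $l(\gamma', \sigma_f^{n+1}(\tau)) \to 0$ and $\gamma' \in \Gamma_f$.

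The remaining and most delicate step is to promote $\Gamma_f$ to a simple Thurston obstruction with $\lambda_{\Gamma_f} \ge 1$. Here I would invoke the dynamical classification of Section~\ref{sec:strata}: by construction the orbit converges under $\overline{\sigma}_f$ to the stratum $\S_{\Gamma_f}$, and iterating the modular pull-back inequality $M(\gamma_i, \sigma_f(\tau)) \ge \sum_j m_{i,j} M(\gamma_j, \tau)$ together with the two-sided Collar Lemma estimate $\pi/l - 1 < M \le \pi/l$ shows that the growth rate of the reciprocal length vector $(1/l(\gamma_i, \sigma_f^n(\tau)))_{i}$ is governed asymptotically by $M_{\Gamma_f}$. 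The fact that every component of this vector actually tends to infinity simultaneously translates, via Proposition~\ref{prop:positive}, into the existence of a strictly positive vector $v$ with $M_{\Gamma_f} v \ge v$, which is exactly the simple obstruction condition with $\lambda_{\Gamma_f} \ge 1$. The main obstacle will be controlling the error terms in this Perron--Frobenius comparison sharply enough to exclude the scenario where $M_{\Gamma_f}$ decomposes into blocks with one spectral radius below $1$; this is precisely the situation ruled out by the finer stratum dynamics of Section~\ref{sec:strata}.
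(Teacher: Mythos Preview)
Your proposal has a genuine gap that affects both parts and stems from a common source: you are implicitly assuming Theorem~\ref{thm:curvebound} (curves outside $\Gamma_f$ have length bounded away from zero along the orbit), but this is exactly what must be proved \emph{together with} Theorem~\ref{thm:CanonicalObstruction}.

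In part~(1), the inference ``$\Gamma_f=\emptyset$ implies the projection of the orbit to $\M_f$ stays in a compact set'' is not justified. The hypothesis $\Gamma_f=\emptyset$ says only that no \emph{single} homotopy class $\gamma$ satisfies $l(\gamma,\tau_n)\to 0$; it does not exclude a sequence of \emph{varying} curves $\gamma_n$ with $l(\gamma_n,\tau_n)\to 0$, which is what Mumford compactness requires. Even granting compactness in $\M_f$, your claim that the orbit accumulates at some $\tau_\infty\in\T_f$ is not automatic, since $\pi^{-1}(K)$ is never compact. In part~(2), the ``main obstacle'' you flag is the same issue in disguise: the asymptotic comparison $Z(\Gamma_f,\sigma_f(\tau))\approx M_{\Gamma_f}Z(\Gamma_f,\tau)$ underlying your Perron--Frobenius argument is only available with bounded error on the set $U(\Gamma_f)=\{\tau:\inf_{\gamma\notin\Gamma_f}l(\gamma,\tau)\ge L\}$ (this is precisely the hypothesis in Propositions~\ref{prop:contraction} and~\ref{prop:seminorm}), and you have no way to know the orbit stays in $U(\Gamma_f)$ without already having Theorem~\ref{thm:curvebound}. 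Your appeal to ``the finer stratum dynamics of Section~\ref{sec:strata}'' does not close this, because those results are all conditioned on membership in $U(\Gamma)$. (A smaller point: your invariance argument shows only $f^{-1}(\Gamma_f)\subseteq\Gamma_f$, not the reverse inclusion needed for complete invariance.)

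The paper breaks the circularity by a different mechanism. It works in the compactification $\AM'_f$ of the intermediate cover from Proposition~\ref{prop:covers} and picks an accumulation point $m'$ of $(\pi_1(\tau_n))$ lying on a boundary stratum $\S_{[\Gamma']}$ of \emph{minimal} dimension. The argument already carried out inside the proof of Thurston's theorem shows that the short curves near such a $\tau_n$ form a completely invariant multicurve $\Gamma\in[\Gamma']$ with a fixed Thurston matrix, and that if this were not a simple obstruction, Proposition~\ref{prop:seminorm} would push the orbit away---contradicting accumulation at $m'$. Hence $\Gamma$ \emph{is} a simple obstruction, and now Proposition~\ref{prop:attracting} traps the orbit near $\S_\Gamma$ forever, forcing $l(\gamma,\tau_n)\to 0$ for every $\gamma\in\Gamma$; the minimal-dimension choice guarantees no other curve gets short. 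This simultaneously identifies $\Gamma_f=\Gamma$, proves it is a simple obstruction, and yields Theorem~\ref{thm:curvebound}. The device you are missing is the passage to $\AM'_f$ and the minimal-stratum selection, which converts the uncontrolled ``whichever curve is short at time $n$'' into a fixed equivalence class $[\Gamma']$ that you can analyse once and for all.
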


\begin{theorem}[Curves Degenerate or Stay Bounded]
\label{thm:curvebound}
For any point $\tau \in \T_f$ there exists a bound $L=L(\tau, f)>0$ such that for any essential simple closed curve $\gamma \notin \Gamma_f$ the inequality $l(\gamma,\sigma_f^n(\tau)) \ge L$ holds for all $n$.  
\end{theorem}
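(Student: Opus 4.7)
The plan is to argue by contradiction, combining the extension of Thurston's pullback map to the augmented \Teich space (Theorem~\ref{thm:extensionA}) with Lipschitz control of hyperbolic length and the weak contraction of $\sigma_f$. Suppose the conclusion fails; then there exist essential simple closed curves $\gamma_k \notin \Gamma_f$ and indices $n_k$ with $l(\gamma_k, \sigma_f^{n_k}(\tau)) \to 0$. Since at any fixed point of $\T_f$ only finitely many essential simple closed curves have length below a given threshold and their minimum length is positive, one necessarily has $n_k \to \infty$.

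First I would use the compactness of the Deligne--Mumford moduli space $\AM_f$ together with Theorem~\ref{thm:extensionA}: after passing to a subsequence (adjusting representatives by pure mapping class group elements if necessary), we may assume $\sigma_f^{n_k}(\tau) \to x$ in $\AT_f$. The pinched multicurve $\Gamma(x)$ at $x$ is finite, and for $k$ sufficiently large every essential simple closed curve whose length at $\sigma_f^{n_k}(\tau)$ is small enough must lie in $\Gamma(x)$. Since $l(\gamma_k, \sigma_f^{n_k}(\tau)) \to 0$, we deduce $\gamma_k \in \Gamma(x)$ for $k$ large, and by pigeonhole we may pass to a further subsequence to assume $\gamma_k = \gamma$ is a fixed essential curve with $\gamma \in \Gamma(x) \sm \Gamma_f$.

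Next I would combine the Lipschitz property of $\log l(\gamma, \cdot)$, with Lipschitz constant $1$ in the \Teich metric, with the weak contraction of $\sigma_f$, which gives $d_T(\sigma_f^m(\tau), \sigma_f^{m+1}(\tau)) \le d_T(\tau, \sigma_f(\tau)) =: D$ for every $m \ge 0$. Iterating, this yields
\[
  l(\gamma, \sigma_f^{n_k + j}(\tau)) \le e^{jD} \, l(\gamma, \sigma_f^{n_k}(\tau))
  \quad \text{for every } j \ge 0.
\]
Fixing $j$ and letting $k \to \infty$, using continuity of the extension $\bar\sigma_f$, we obtain $l(\gamma, \bar\sigma_f^j(x)) = 0$ for every $j \ge 0$; that is, $\gamma$ is pinched at every point of the forward $\bar\sigma_f$-orbit of $x$.

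To close the argument I would appeal to the classification of dynamics on invariant boundary strata developed in Section~\ref{sec:strata}: the forward $\bar\sigma_f$-orbit of $x$ is eventually contained in an invariant stratum $\S(\Gamma')$ for some completely invariant multicurve $\Gamma'$, and by the Canonical Obstruction Theorem (Theorem~\ref{thm:CanonicalObstruction}) together with the fact that the orbit originates from a point in $\T_f$, such a $\Gamma'$ must be contained in $\Gamma_f$. The persistent pinching of $\gamma$ along the orbit then forces $\gamma \in \Gamma'$ and hence $\gamma \in \Gamma_f$, contradicting the choice of $\gamma$. The hard part will be this final step: it rests on the machinery of Section~\ref{sec:strata} to pin down the multicurve of the limiting invariant stratum and relate it, through the complete invariance and minimality properties encoded in Theorem~\ref{thm:CanonicalObstruction}, to the canonical obstruction $\Gamma_f$.
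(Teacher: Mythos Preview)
There are two genuine gaps. First, the compactness step does not work as written. The augmented \Teich space $\AT_f$ is not compact (it is not even locally compact), so you cannot extract a subsequence with $\sigma_f^{n_k}(\tau)\to x$ in $\AT_f$. What compactness of $\AM_f$ gives you is $g_k\cdot\sigma_f^{n_k}(\tau)\to x$ for suitable mapping class group elements $g_k$; but then the short curve on $g_k\cdot\sigma_f^{n_k}(\tau)$ is $g_k(\gamma_k)$, so after pigeonholing you only fix $g_k(\gamma_k)=\gamma$, while the original $\gamma_k=g_k^{-1}(\gamma)$ still varies. Moreover $\sigma_f$ is not equivariant under the mapping class group, so $\bar\sigma_f^j(x)$ is not the limit of (translates of) $\sigma_f^{n_k+j}(\tau)$, and your Lipschitz transfer to $l(\gamma,\bar\sigma_f^j(x))=0$ breaks down.

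Second, even granting that some limit $x\in\S_\Gamma$ exists with $\gamma\in\Gamma$, your closing step is not justified and is essentially circular. The map $\bar\sigma_f$ sends $\S_\Gamma$ into $\S_{f^{-1}(\Gamma)}$, so the orbit of $x$ runs through the sequence $\Gamma,f^{-1}(\Gamma),f^{-2}(\Gamma),\dots$; there is no reason this sequence stabilizes to a completely invariant $\Gamma'$, and nothing in Section~\ref{sec:strata} asserts it does. More importantly, invoking Theorem~\ref{thm:CanonicalObstruction} to conclude $\Gamma'\subseteq\Gamma_f$ amounts to assuming that the orbit of $\tau$ accumulates only on $\overline{\S_{\Gamma_f}}$, which is exactly (the moduli-space formulation of) what Theorems~\ref{thm:CanonicalObstruction} and~\ref{thm:curvebound} together assert. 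In the paper these two theorems are proved simultaneously: one works in the compact space $\AM'_f$, takes an accumulation point $m'$ in a stratum $\S_{[\Gamma']}$ of \emph{minimal} dimension, uses the argument from the proof of Thurston's theorem to show the corresponding $\Gamma$ is a simple obstruction, and then uses the attracting property (Proposition~\ref{prop:attracting}) to trap the orbit near $\S_\Gamma$. The minimal-dimension choice then yields both $\Gamma=\Gamma_f$ and the uniform lower bound $L$. Your outline bypasses precisely this ``minimal stratum $+$ attracting'' mechanism, which is where the actual content lies.
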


\begin{remark} In the terms to be defined in the next section, the previous two theorems can be reformulated as follows: the sequence $\{\sigma_f^n(\tau)\}$ tends to $\S_{\Gamma_f}$ (Theorem~\ref{thm:CanonicalObstruction}) and the accumulation set of  $\pi(\{\sigma_f^n(\tau)\})$ in the compactified moduli space is a compact subset of  $\S_{[\Gamma_f]}$ (Theorem~\ref{thm:curvebound}).
\end{remark}

\section{The augmented \Teich space}

\label{sec:Aug}

Very relevant and good surveys on augmented \Teich spaces can be found in \cite{W1,W2}. Here we remind the reader of the basics that we will need later on.

Let $S$ be a topological surface of finite type (i.e., a surface of genus $g$ with $n$ punctures) with $m$ marked points   such that the Euler characteristic $\chi(S)=2-2g-n-m$ is negative. Recall that the  \Teich space of $S$ is the space of all Riemann surfaces with $m$ marked points of the same type as $S$. Each point in $\T(S)$ can be represented by a homeomorphism between $S$ and a Riemann surface. One defines the augmented \Teich space $\AT(S)$ as the space of all stable  Riemann surfaces with marked points \emph{with nodes} of the same type as $S$. The \emph{type} of a noded surface is defined by its topological type (more precisely, by the topological type  of a  surface one obtains by opening up all nodes) and the number of marked points (excluding nodes). In our case, the case of the sphere with marked points, any \emph{surface with nodes}  and marked points $R$ (or a \emph{noded surface}) is a collection of \emph{components} that are topological spheres with marked points, so that two components intersect in at most one marked point, each marked point belongs to at most two components, and the union of all components is connected and simply connected. The marked points that belong to two components of $R$ are called \emph{nodes}. Any component of a noded  surface $R$ can be obtained from a connected component of the complement of nodes in $R$ by adding to it all incident nodes as marked points. The genus, in this setting, is always $0$ thus the type of such a noded surface is determined by the number of marked points that are not nodes. \emph{Stable} noded surfaces are those for which every component is hyperbolic. We represent points in $\AT(S)$ not only by homeomorphisms but also by continuous maps from $S$ to a noded Riemann surface that are allowed to send a whole simple closed curve (or, which is the same up to homotopy, a closed annulus) in the complement of marked points  to a node. In other words, we allow to pinch some of the curves (or closed annuli) on $S$ into nodes. Abusing terminology, we will also call these curves on $S$ nodes.
The same idea is used to construct $\AM(S)$ --- the Bers compactification of the moduli space \cite{B2,B1}. The canonical projection from $\T(S)$ to $\M(S)$ extends to the canonical projection from $\AT(S)$ to $\AM(S)$. In other words, the following holds (see \cite{abi,B1}).

\begin{theorem} The quotient of $\AT(S)$ by the action of the pure mapping class group is compact.
\end{theorem}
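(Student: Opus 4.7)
The plan is to show that every sequence in $\AM(S)$ has a convergent subsequence. Since $\M(S)$ is dense in $\AM(S)$, it suffices to treat a sequence $[R_n]$ with $R_n\in\M(S)$. The main external ingredients are the Collar Lemma (Proposition~6.1 of \cite{DH}) and Mumford's compactness theorem, which states that for each $\eps>0$ the locus in $\M(S)$ of surfaces whose systole is at least $\eps$ is compact.

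Fix $\eps_0$ smaller than the collar constant. On each $R_n$ let $\Gamma_n$ be the collection of simple closed geodesics of length at most $\eps_0$; by the Collar Lemma these have pairwise disjoint standard collars, so $\Gamma_n$ is a multicurve. Since the pure mapping class group of $S$ has only finitely many orbits on isotopy classes of multicurves, we may pass to a subsequence in which all $\Gamma_n$ lie in a single orbit, and after adjusting by PMCG elements assume $\Gamma_n=\Gamma$ for one fixed multicurve $\Gamma\subset S$. A further diagonal extraction yields $l(\gamma,R_n)\to L_\gamma\in[0,\eps_0]$ for each $\gamma\in\Gamma$; denote by $\Gamma_0\subseteq\Gamma$ the subset where $L_\gamma=0$. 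Cutting along the geodesic representatives of $\Gamma$ splits each $R_n$ into finitely many hyperbolic components $R_n^{(j)}$ of lower complexity on which every simple closed geodesic has length at least $\eps_0$. Mumford's theorem applied component-wise gives, after one more extraction, limits $R_n^{(j)}\to R_\infty^{(j)}$, where boundary circles corresponding to curves in $\Gamma_0$ degenerate to punctures. Re-gluing these limit components along the curves in $\Gamma\sm\Gamma_0$ (whose lengths and twist parameters converge by construction) yields a stable noded surface $R_\infty$ of the correct type whose nodes are exactly the curves of $\Gamma_0$, so $[R_\infty]\in\AM(S)$ is a candidate limit.

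The main obstacle is to verify that in fact $[R_n]\to[R_\infty]$ in the topology of $\AM(S)$. That topology is defined via representing continuous maps $S\to R_\infty$ that are allowed to collapse annuli to nodes, so one must build such approximating maps from the convergence established above. Using the Collar Lemma, the collar about each geodesic $\gamma\in\Gamma_0$ on $R_n$ is a long cylinder whose conformal modulus tends to infinity, and collapsing its core to the node produces the required map on that annular region; on the thick complement, the convergence $R_n^{(j)}\to R_\infty^{(j)}$ in each lower-dimensional moduli space supplies a quasi-conformal correction whose dilatation tends to $1$. Re-encoded in Fenchel--Nielsen coordinates extended to allow zero length parameters (with the corresponding twist parameters factored out), this amounts to the assertion that the pinching coordinates form a neighborhood basis of $[R_\infty]$, which is the standard way the topology on $\AT(S)$ is set up; from this the compactness of $\AM(S)$ follows.
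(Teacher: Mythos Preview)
The paper does not prove this theorem; it quotes it as a known result with references to Abikoff and Bers (\cite{abi,B1}). Your outline is the standard route to compactness of the Deligne--Mumford/Bers compactification and is essentially correct, so there is nothing to compare against in the paper itself.

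One step is not justified as written. You assert that the twist parameters along the curves of $\Gamma\setminus\Gamma_0$ ``converge by construction,'' but nothing in your construction controls them: you have extracted subsequences for the lengths $L_\gamma$ and for the isometry types of the pieces $R_n^{(j)}$, yet the twist about a curve $\gamma\in\Gamma\setminus\Gamma_0$ is a gluing datum, not an invariant of either piece, and it may run off to infinity along your subsequence. The fix is the expected one: Dehn twists about the curves of $\Gamma$ lie in the pure mapping class group and stabilise $\Gamma$, so after a further adjustment by such twists you may bring each twist parameter into a bounded fundamental domain and then pass to a convergent subsequence. (For the curves of $\Gamma_0$ the twist becomes irrelevant in the limit, but the same normalisation is convenient when verifying $[R_n]\to[R_\infty]$.) A smaller point: ``Mumford's theorem applied component-wise'' is being invoked for bordered surfaces with geodesic boundary of length in $(0,\eps_0]$; this version is true and routine, but it is not literally the statement you quoted for closed surfaces, so it deserves a word.
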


The augmented \Teich space $\AT_f$ is a stratified space with strata corresponding to multicurves on $(\Sphere,P_f)$. We denote by $\S_\Gamma$ the stratum corresponding to the multicurve $\Gamma$, i.e., the set of all noded surfaces for which the nodes come from pinching all elements of $\Gamma$ and there are no other nodes. In particular, $\T_f = \S_\emptyset$. Strata of $\AM_f$ are labeled by equivalence classes $[\Gamma]$ of multicurves, where two multicurves $\Gamma_1$ and $\Gamma_2$ are in the same class if and only if one can be transformed to the other by an element of the pure mapping class group or, equivalently, if the respective elements of $\Gamma_1$ and $\Gamma_2$ separate points of $P_f$ in the same way. We naturally denote $\partial \T_f = (\AT_f \sm \T_f)$.

Given a multicurve $\Gamma$, each point in the stratum $\S_\Gamma$ is a collection of complex structures on the components of the corresponding topological noded surface with marked points. Therefore, $\S_\Gamma$ is the product of \Teich spaces of these components. We will refer to the points in the \Teich spaces of components as \emph{coordinates} of a point in $\S_\Gamma$. Within each stratum one can define its own natural \Teich (as the $\infty$-product of \Teich metrics of components) or Weil-Petersson (as the $2$-product of Weil-Petersson merics of components) metrics. The following theorem \cite{Masur} shows the interplay between the notion of the augmented \Teich space and the Weil-Petersson metric.

\begin{theorem}
\label{thm:masur}
 The augmented \Teich space is homeomorphic to the completion of the \Teich space with respect to the Weil-Petersson metric. Moreover, the restriction of the completed Weil-Petersson metric to each stratum is the Weil-Petersson metric of this stratum.
\end{theorem}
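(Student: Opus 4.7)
The plan is to prove Theorem~\ref{thm:masur} following Masur's original approach, namely via Fenchel--Nielsen coordinates adapted to the pinching multicurve combined with an asymptotic expansion of the Weil--Petersson metric tensor near the boundary. First I would fix a multicurve $\Gamma$ on $(\Sphere,P_f)$ and extend it to a full pants decomposition $\mathcal{P}$, yielding real--analytic coordinates $(l_i,\tau_i)_{i=1}^{N}$ on $\T_f$ with $N=p_f-3$, where $l_i>0$ is the hyperbolic length of the $i$-th pants curve and $\tau_i\in\R$ is the associated twist. For the curves in $\Gamma$ I would switch to the new coordinate $u_i=l_i^{1/2}$, so that $u_i=0$ corresponds to pinching $\gamma_i$, while keeping the pairs $(l_j,\tau_j)$ for $\gamma_j\in\mathcal{P}\setminus\Gamma$ intact.

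The technical heart of the argument is Masur's asymptotic expansion: as the lengths $l_i$ for $\gamma_i\in\Gamma$ tend to zero, the Weil--Petersson metric written in the coordinates $(u_i,\tau_i)$ together with the remaining Fenchel--Nielsen parameters extends continuously across the face $\{u_i=0\}$, and on that face decouples into the product of the Weil--Petersson metrics of the components obtained by pinching $\Gamma$. The key ingredients are Wolpert's identity $\omega_{WP}=\tfrac12\sum dl_i\wedge d\tau_i$ for the symplectic form, which fixes the twist--length normalization, together with an $L^2$--estimate on the harmonic Beltrami differentials representing $\partial/\partial l_i$ and $\partial/\partial\tau_i$, carried out on the standard plumbing model $zw=t$ with the dictionary $l_i\sim\pi^2/\log(1/|t|)$. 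Producing this uniform expansion is what I expect to be the main obstacle, and is precisely what is carried out in \cite{Masur}.

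Granted the expansion, both assertions of the theorem follow with little further work. For a Weil--Petersson--Cauchy sequence $\{\tau_n\}\subset\T_f$, after passing to a subsequence the collection $\Gamma$ of essential simple closed curves whose length tends to $0$ is a multicurve; if $\Gamma=\emptyset$, Proposition~\ref{prop:metriccompairson}(ii) combined with completeness of the \Teich metric forces $\{\tau_n\}$ to converge inside $\T_f$, and otherwise the expansion controls the remaining coordinates as well and identifies the limit with a point of $\S_\Gamma$. Conversely, a straight--line path in the $u_i$--coordinates shows every point of $\AT_f$ is reached in finite Weil--Petersson length, so the natural inclusion $\AT_f\hookrightarrow\widehat{\T_f}^{WP}$ is surjective, and an inspection of the local model shows it is a homeomorphism onto. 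The product structure of the restricted metric on each stratum is built into the local model, giving the second sentence of the theorem.
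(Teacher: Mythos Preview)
The paper does not prove Theorem~\ref{thm:masur}; it is quoted from \cite{Masur} as a known result and used as a black box (immediately afterwards, Proposition~\ref{prop:Lipschitz} combined with this theorem yields Theorem~\ref{thm:extensionA}). So there is no ``paper's own proof'' to compare against.

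Your proposal is a faithful outline of Masur's original argument, and the structure you give --- Fenchel--Nielsen coordinates adapted to the pinching multicurve, the change of variable $u_i=l_i^{1/2}$, Wolpert's twist--length formula, and the plumbing model to control the WP pairing of the harmonic Beltrami basis --- is the correct skeleton. You also correctly isolate the hard step: the uniform asymptotic expansion of the WP inner product near the degeneration locus, which is exactly the content of \cite{Masur}. One remark: in the step where you handle a WP--Cauchy sequence with $\Gamma=\emptyset$, invoking Proposition~\ref{prop:metriccompairson}(ii) and completeness of the \Teich metric presupposes that you already know no curve is getting short along the sequence; Mumford's compactness criterion (or the asymptotic $d_{WP}\asymp \sum u_i$ near the boundary) is the cleaner way to rule out degeneration and should be stated explicitly. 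Otherwise your sketch is sound and, since the paper simply cites the result, there is nothing further to reconcile.
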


The following estimate on the Weil-Petersson norm of the coderivative of $\sigma_f$ shows that $\sigma_f$ is Lipschitz with respect to the Weil-Petersson metric on $\T_f$ and hence extends to its completion $\AT_f$. Combined with Theorem~\ref{thm:masur}, this  proves Theorem~\ref{thm:extensionA}.

\begin{proposition}
 $\|(d\sigma_f)^*\|_{WP} \le \sqrt{d_f}.$
\label{prop:Lipschitz}
\end{proposition}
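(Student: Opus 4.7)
The plan is to unpack the identity $(d\sigma_f(\tau))^* = (f_\tau)_*$ established via diagram \eqref{dg2} and estimate the Weil-Petersson norm of the push-forward operator directly. Recall that for $q \in Q(\P, h_1(P_f))$ (the cotangent space at $\sigma_f(\tau)$), the push-forward is given pointwise by
\[
((f_\tau)_* q)(w) = \sum_{z \in f_\tau^{-1}(w)} \frac{q(z)}{(f_\tau'(z))^2}, \qquad w \in \P \sm h_\tau(P_f),
\]
a sum of $d_f$ terms. Apply the elementary Cauchy--Schwarz bound $|a_1+\cdots+a_{d_f}|^2 \le d_f(|a_1|^2+\cdots+|a_{d_f}|^2)$ to get
\[
\|(f_\tau)_* q\|_{WP}^2 \le d_f \int_\P \sum_{z\in f_\tau^{-1}(w)} \frac{|q(z)|^2}{|f_\tau'(z)|^4}\,\rho_\tau(w)^{-2}\,dA(w),
\]
where $\rho_\tau$ is the hyperbolic density on $\P\sm h_\tau(P_f)$.

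Next I would change variables, using $dA(w) = |f_\tau'(z)|^2\,dA(z)$ locally on each sheet of $f_\tau$. Summing the local contributions replaces the integral over $w$ plus the sum over preimages by a single integral over $z\in \P$:
\[
\|(f_\tau)_* q\|_{WP}^2 \le d_f \int_\P \frac{|q(z)|^2}{|f_\tau'(z)|^2\, \rho_\tau(f_\tau(z))^2}\,dA(z).
\]
It now remains to compare the weight $|f_\tau'(z)|^2 \rho_\tau(f_\tau(z))^2$ with $\rho_1(z)^2$ (the hyperbolic density on $\P\sm h_1(P_f)$, appearing in $\|q\|_{WP}^2$).

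For this I would invoke the Schwarz--Pick / covering principle. The critical values of $f_\tau$ lie in $h_\tau(P_f)$, and from the commutativity of diagram \eqref{dg2} together with forward invariance of $P_f$ we have $h_1(P_f)\subseteq f_\tau^{-1}(h_\tau(P_f))$. Hence the restriction
\[
f_\tau \colon \P \sm f_\tau^{-1}(h_\tau(P_f)) \longrightarrow \P \sm h_\tau(P_f)
\]
is an unbranched holomorphic covering between hyperbolic surfaces, and is therefore a local isometry of the respective Poincar\'e metrics: $|f_\tau'(z)|\,\rho_\tau(f_\tau(z)) = \rho_{\mathrm{pre}}(z)$, where $\rho_{\mathrm{pre}}$ denotes the hyperbolic density on $\P\sm f_\tau^{-1}(h_\tau(P_f))$. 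By monotonicity of the hyperbolic metric under removal of more points, $\rho_{\mathrm{pre}}(z) \ge \rho_1(z)$. Substituting this bound into the previous display yields $\|(f_\tau)_* q\|_{WP}^2 \le d_f\,\|q\|_{WP}^2$, which is exactly the claim.

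The computation itself is short; the only genuinely delicate points are the two inclusions of punctured domains that enable the Schwarz--Pick step (critical values of $f_\tau$ are inside $h_\tau(P_f)$, and $h_1(P_f)\subseteq f_\tau^{-1}(h_\tau(P_f))$), both of which are direct consequences of diagram \eqref{dg2} and the postcritical finiteness of $f$. I expect this is the step a reader is most likely to want spelled out carefully; the change-of-variables calculation and the Cauchy--Schwarz step are routine.
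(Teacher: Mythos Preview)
Your argument is correct and is essentially the same proof as the paper's: both use the Cauchy--Schwarz bound on the $d_f$ summands of the push-forward, the fact that $f_\tau\colon \P\sm f_\tau^{-1}(h_\tau(P_f))\to \P\sm h_\tau(P_f)$ is an unbranched covering (hence a local isometry for the hyperbolic metrics), and the Schwarz-lemma monotonicity $\rho_{\mathrm{pre}}\ge \rho_1$ coming from the inclusion $\P\sm f_\tau^{-1}(h_\tau(P_f))\hookrightarrow \P\sm h_1(P_f)$. The paper phrases the computation locally via branches of $f_\tau^{-1}$ rather than a global change of variables, but the content is identical.
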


\begin{proof}
Let us prove the statement for an arbitrary point $\tau=\langle h \rangle \in \T_f$. To simplify notation, set $g=f_\tau, d=d_f$ and $P=h(P_f)$. We need to prove then that $\|g_*q\|_{WP} \le \sqrt{d} \|q\|_{WP}$ for any $q \in Q(\P,P')$ where $P'=h_1(P_f)$ is the image of $P_f$ for $\sigma_f(\tau)=\langle h_1 \rangle$ (see the commutative \ref{dg2}).

If we take some small domain $U \subset (\P \sm P)$ with local
coordinate $\zeta$ such that it has exactly $d$ disjoint preimages
$U_i, i=\overline{1,d}$, with $g_i \colon I=U \to U_i$ the local branches
of $g^{-1}$, then 
\[ g_* q |_U = \sum_i g_i^* q. \]

Let $\rho^2$ and $\rho_1^2$ stand for the hyperbolic area elements
on $\P \sm P$ and $\P \sm P'$. The hyperbolic area element on $\P
\sm g^{-1}(P)$ is given by $g^* \rho^2$. The inclusion map $I
\colon \P \sm g^{-1}(P) \to \P \sm P'$ is length-decreasing;
therefore $\rho_1^2 \le g^* \rho^2$.

Now we can locally estimate:
\[ \int_U \frac{|g_* q|^2}{\rho^2} =
   \int_U \frac{|\sum_i g_i^* q|^2}{\rho^2} \le
   d \sum_i \int_{U} \frac{|g_i^*q|^2}{\rho^2}=
   d \sum_i \int_{U_i} \frac{|q|^2}{g^*\rho^2}
   \le  d \int_{g^{-1}(U)} \frac{|q|^2}{\rho_1^2},
\]
 where the first inequality follows from the fact that
 $$\left|\sum_{i=1}^d a_i \right|^2 \le d \sum_{i=1}^d |a_i|^2. $$

Combining local estimates, we get
\[ \|g_* q\|_{WP}  \le \sqrt{d} \|q\|_{WP},\]
as required. 
\qed\end{proof}

In Section~\ref{sec:extension} we refine the statement of Theorem~\ref{thm:extensionA} by showing that  a certain  extension of $\sigma_f$ to the boundary of $\AT_f$, defined in terms of pullbacks of complex structures on noded Riemann surfaces,  is continuous (see Theorem~\ref{thm:Continuity}) and, hence, coincides with the extension given by metric completion.

Note that $l(\gamma,\tau)$ extends continuously to a function from $\AT_f$ to $[0,+\infty ]$. A curve of length zero corresponds to a node; a curve of infinite length has to pass through at least one node (it has positive intersection number with at least one curve of length zero). By definition, $\tau \in \S_\Gamma$ when $l(\gamma,\tau)=0$ for those and only those homotopy classes of curves $\gamma$ that belong to $\Gamma$.

We can also view $l$ as a map $l \colon \AT_f \to [ 0,+\infty ]^{\H}$ where $\H$ is the set of all homotopy classes of essential simple closed curves. We endow $[ 0,+\infty ]^{\H}$ with the product topology.

\begin{proposition}
\label{prop:embedding}
The map $l \colon \AT_f \to [ 0,+\infty ]^{\H}$ is an embedding.
\end{proposition}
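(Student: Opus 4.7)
The proposition requires three verifications: continuity of $l$, injectivity of $l$, and continuity of $l^{-1}$ on the image. Continuity of $l$ is immediate from the remark preceding the proposition: each coordinate $l(\gamma,\cdot)\colon \AT_f \to [0,+\infty]$ is continuous, so the product map is continuous into $[0,+\infty]^\H$.

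For injectivity, suppose $l(\tau_1)=l(\tau_2)$. The zero set of $l(\cdot,\tau_i)$ is exactly the multicurve $\Gamma_i$ whose pinching places $\tau_i$ in its stratum, so $\Gamma_1=\Gamma_2=:\Gamma$ and $\tau_1,\tau_2\in\S_\Gamma$. The stratum $\S_\Gamma$ decomposes as a product of Teichm\"uller spaces of the components $C_j$ of the associated noded surface, and every essential simple closed curve disjoint up to homotopy from $\Gamma$ lies in a unique $C_j$, where its length at $\tau_i$ equals the hyperbolic length of the corresponding curve in the $j$-th factor. By the classical fact that simple-closed-curve length functions separate points in any finite-dimensional Teichm\"uller space (Fenchel--Nielsen coordinates plus enough twist-resolving curves), the $j$-th coordinates of $\tau_1$ and $\tau_2$ must agree for every $j$, hence $\tau_1=\tau_2$.

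To prove $l^{-1}$ is continuous on the image, suppose $l(\tau_n)\to l(\tau_\infty)$ coordinate-wise, and let $\Gamma=\{\gamma : l(\gamma,\tau_\infty)=0\}$. The plan is to choose a pants decomposition $\mathcal{P}$ of $(\Sphere, P_f)$ refining $\Gamma$, together with a finite family $\mathcal{A}$ of auxiliary simple closed curves whose lengths at $\tau_\infty$ determine the Fenchel--Nielsen twists along the curves of $\mathcal{P}\setminus\Gamma$ (concretely, Dehn-twisted companions in each pair of pants). Bers' description of $\AT_f$ near $\tau_\infty$ provides a neighborhood basis cut out by inequalities on the Fenchel--Nielsen parameters along $\mathcal{P}$: curves in $\Gamma$ short, curves of $\mathcal{P}\setminus\Gamma$ with length and twist close to those of $\tau_\infty$. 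The coordinate-wise convergence of $l(\tau_n)$ on the finite family $\mathcal{P}\cup \mathcal{A}$ then forces all these parameters to converge, placing $\tau_n$ eventually inside every prescribed neighborhood, whence $\tau_n\to\tau_\infty$ in $\AT_f$.

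The main obstacle is this last step: lengths of simple closed curves do not directly record twists, so one must produce a \emph{finite} family of curves whose length functions together with the natural product coordinates of $\S_\Gamma$ give an open embedding of a neighborhood of $\tau_\infty$ into a finite product of intervals. The construction itself is classical, but must be executed component-by-component using the product decomposition of $\S_\Gamma$, while verifying that the length constraints mesh properly across strata of differing dimension at the noded boundary.
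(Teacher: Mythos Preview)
Your proposal is correct and follows essentially the same route as the paper. The paper's proof is terser: it cites Theorems~3.12 and~3.15 of Imayoshi--Taniguchi for the fact that $l$ restricted to $\T_f$ is a homeomorphism onto its image, then invokes the product structure on each stratum to extend this to every $\S_\Gamma$, observes that images of distinct strata are disjoint (exactly your zero-set argument), and finally asserts that continuity of the inverse is ``straightforward to check'' without elaboration. Your injectivity argument unpacks the same ingredients explicitly, and your Fenchel--Nielsen/Bers-neighborhood sketch for the inverse continuity is a legitimate way to make that ``straightforward'' claim precise---indeed, the obstacle you flag (recovering twist parameters from a finite family of auxiliary lengths, uniformly across nearby strata) is the one genuine technicality both treatments leave to the reader.
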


\begin{proof}
We know that $l$ is continuous on $\AT_f$ and that $l(\T_f)$ is mapped  onto its image by a homeomorphism (compare Theorems~3.12 and 3.15 in \cite{IT}). Similarly we can see, using the product structure of the boundary strata, that each of them is mapped to its image by a homeomorphism. Evidently images of different strata are disjoint. Hence the map is injective. It is straightforward to check that its inverse is also continuous.
\qed\end{proof}

In other words, the homotopy classes of nodes and the lengths of all simple closed geodesics uniquely define a point in $\AT_f$ and the topology on $\AT_f$ can be defined using the topology of $[ 0,+\infty ]^{\H}$.

\section{Technical background}
First we prove the following technical propositions.

\begin{proposition}
Let $X$ be an open hyperbolic subset of the Riemann sphere and $p$ be an isolated puncture of $X$.
Take a nested sequence $\{U_n\}$ of closed neighborhoods of $p$. Denote by $\rho_X$ the hyperbolic distance element on X and $\rho_n$ the hyperbolic distance element on the set $X \sm U_n$. If \, $\bigcap U_n = \{p\}$ then $\{\rho_n(x)\}$ tends to $\rho_X(x)$ for any point $x \in X$. Moreover, the convergence is uniform on compact subsets of  $X$.

\label{prop:MetricEstimate}
\end{proposition}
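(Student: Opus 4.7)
The plan is to deduce this from the classical monotone convergence of hyperbolic densities under an increasing exhaustion. Set $X_n := X \setminus U_n$. Since $p$ is a puncture of $X$ (so $p \notin X$), the nesting $U_{n+1} \subseteq U_n$ together with $\bigcap_n U_n = \{p\}$ makes $\{X_n\}$ an increasing sequence of open subsets of $X$ whose union equals $X \setminus \{p\} = X$. In particular, every $x \in X$ lies in $X_n$ for all sufficiently large $n$, so $\rho_n(x)$ is eventually defined.

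By the Schwarz--Pick principle applied to the inclusions $X_n \hookrightarrow X_{n+1} \hookrightarrow X$ one obtains the pointwise inequalities $\rho_X \le \rho_{n+1} \le \rho_n$ on $X_n$. Hence the sequence $\{\rho_n(x)\}$ is eventually monotone decreasing and bounded below by $\rho_X(x) > 0$, and the pointwise limit $\rho_\infty(x) := \lim_n \rho_n(x) \ge \rho_X(x)$ exists throughout $X$.

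The main step is the reverse inequality $\rho_\infty \le \rho_X$. In a local coordinate $z$, write $\rho_n = e^{u_n}|dz|$; the curvature $-1$ condition reads $\Delta u_n = e^{2 u_n}$. Since $\{u_n\}$ is a monotone decreasing sequence uniformly bounded above on compact subsets of $X$ by $u_1$, interior estimates for this semilinear elliptic equation give $C^\infty_{\mathrm{loc}}$-compactness and upgrade the monotone pointwise convergence to locally smooth convergence. Consequently $\rho_\infty$ is a smooth conformal metric of constant Gaussian curvature $-1$ on $X$, and the Ahlfors--Schwarz lemma yields $\rho_\infty \le \rho_X$. Combined with the previous paragraph this gives $\rho_n \to \rho_X$ pointwise.

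Uniform convergence on any compact $K \subset X$ then follows from Dini's theorem: $K$ is eventually contained in $X_n$, and the continuous functions $\rho_n|_K$ decrease monotonically to the continuous limit $\rho_X|_K$. The main technical obstacle is the elliptic-regularity step that promotes the naive pointwise limit $\rho_\infty$ to an honest smooth curvature $-1$ metric, since without this regularity one cannot directly invoke Ahlfors--Schwarz to close the argument.
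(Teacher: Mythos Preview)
Your argument is correct. Both proofs share the easy half (monotonicity via Schwarz--Pick), but they diverge on the hard inequality $\rho_\infty \le \rho_X$. The paper proceeds by a direct and entirely elementary trick using the Kobayashi description of the hyperbolic metric: pick an almost-extremal map $f \colon \D \to X$ with $f(0)=x$, observe that $f(\overline{D_{1-\eps}})$ is compact in $X$ and hence lands in $X_n$ for large $n$, and rescale to get $\rho_n(x) \le \frac{1+\eps}{1-\eps}\rho_X(x)$. No PDE machinery is invoked. Your route instead passes through elliptic regularity for $\Delta u_n = e^{2u_n}$ to upgrade the monotone limit to a smooth curvature $-1$ metric and then applies Ahlfors--Schwarz; this is heavier but perfectly valid, and your invocation of Dini for uniform convergence on compacta is actually more explicit than the paper's ``evident''. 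One cosmetic point: you bound $u_n$ above by $u_1$, which is only defined on $X_1$; for a compact $K \subset X$ you should fix $n_0$ with $K \subset X_{n_0}$ and bound by $u_{n_0}$ there, though this changes nothing in substance.
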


\begin{proof}
The identity inclusion of $X \sm U_n$ into $X \sm U_m$ for $n<m$ is obviously holomorphic, hence length-decreasing; the same is true for inclusions of $X \sm U_n$ into $X$. Therefore, $\{\rho_n(x)\}$ is decreasing and the limit is greater or equal than $\rho_X(x)$. 

Without loss of generality we assume that $p=\infty$. 
We know that $\rho_X(x) = 2/ \sup |f'(0)| $ where $f$ runs through the set of all holomorphic maps from the unit disk $\D$ to $X$ that send $0$ to $x$ (this is the definition of the Kobayashi  metric which is well known to coincide with the hyperbolic metric for hyperbolic Riemann surfaces). Let $\eps>0$. Pick such a map $f : \D \to X$ so that $2  / |f'(0)|  < (1 + \eps)\rho_X$. Set $D_t=\{|z| \le t, z \in \C\}$. We notice that $f({D_{1-\eps}})$ is compact in $\C$, hence bounded. Then $f(D_{1-\eps}) \subset X \sm U_n$ for all $n$ large enough. Hence $f_\eps (z):= f((1-\eps)z)$ is a holomorphic map from $\D$ to $X \sm U_n$ with $f(0)=x$ and $f_\eps'(0)=(1-\eps)f'(0)$. Thus by definition $$\rho_n(x) \le \frac{2}{f_\eps'(0)} = \frac{2}{(1-\eps)f'(0)} \le \frac{1+\eps}{1-\eps}\rho_X(x). $$
This shows that $\rho_n \to \rho_X$. Uniform convergence on compact subsets of $X$ is evident. 
\qed\end{proof}


\begin{proposition}

Let  $\{U_n\}$ be an increasing nested sequence of open subsets of $\P$ such that the complement of the union $U$ of all $U_n$ consists of finitely many points 
and $\{f_n\colon\P\to\P\}$ be a sequence of continuous maps that fix three points $a_1,a_2,a_3$ on $\P$. If $f_n^{-1}$ is well-defined (i.e. every point has exactly one preimage) and conformal on $U_n$ for all $n$, then $\{f_n\}$ converges uniformly to the identity mapping on $\P$.

\label{prop:ParameterDependence}\end{proposition}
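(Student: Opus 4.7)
The plan is to study the conformal inverse maps $g_n := f_n^{-1}|_{U_n} \colon U_n \to V_n$ (with $V_n := f_n^{-1}(U_n)$), show that $g_n \to \id$ uniformly on compact subsets of $U := \bigcup_n U_n$ by a normal-family argument, and then deduce $f_n \to \id$ uniformly on all of $\P$.

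First I would apply Montel's theorem to $\{g_n\}$. Each $g_n$ is injective holomorphic with $g_n(a_i) = a_i$, so for any compact $K \subset U$ disjoint from $\{a_1, a_2, a_3\}$, injectivity forces $g_n(K) \subset \P \setminus \{a_1, a_2, a_3\}$, giving normality. Diagonalizing over an exhaustion of $U \setminus \{a_1, a_2, a_3\}$ extracts a subsequential limit $g$, which extends holomorphically across each $a_i$ via Cauchy's integral formula: for $n$ large, $g_n$ is holomorphic on a disk about $a_i$ with boundary values converging uniformly. By Hurwitz, $g$ is constant or injective, and the three distinct fixed points rule out the former. Riemann removability then extends $g$ to a meromorphic self-map of $\P$, which is Moebius by injectivity and the identity by the fixed-point condition. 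Uniqueness of subsequential limits gives $g_n \to \id$ uniformly on compact subsets of $U$. A routine Rouch\'e argument applied to $g_n(z) - y$ on a compact set $L \supset K$ upgrades this to $f_n \to \id$ uniformly on compact subsets of $U$.

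To extend to all of $\P$, I would argue by contradiction. Suppose there exist $\eps > 0$, $n_k \to \infty$, and $x_{n_k} \to p_i$ (say $p_1$) with $d(f_{n_k}(x_{n_k}), x_{n_k}) \geq \eps$. Pick $\delta, \eta, r_0 > 0$ sufficiently small that $2\delta + \eta/2 < \eps$, $r_0 \leq \delta + \eta/2$, and the closed disks $\overline{D_{\delta + \eta/2}(p_1)}, \overline{D_{r_0}(p_2)}, \dots, \overline{D_{r_0}(p_m)}$ are pairwise disjoint (where $\{p_1,\dots,p_m\} = \P \setminus U$). On the compact set $K_{r_0} := \P \setminus \bigcup_i D_{r_0}(p_i) \subset U$, $g_n \to \id$ uniformly, so for $n$ large and $y \in K_{r_0} \cap U_n$ with $d(y, p_1) \geq \delta + \eta/2$, $g_n(y) \notin \overline{D_\delta(p_1)}$. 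Since $g_n(y)$ is the unique preimage of $y$ under $f_n$, this forces $y \notin f_n(\overline{D_\delta(p_1)})$. Combined with the Hausdorff convergence $\P \setminus U_n \to \{p_1,\dots,p_m\}$ (decreasing compact sets whose intersection is the finite puncture set), which ensures $\P \setminus U_n \subset \bigcup_i D_{r_0}(p_i)$ for $n$ large, this gives
\[
f_n(\overline{D_\delta(p_1)}) \;\subset\; \overline{D_{\delta + \eta/2}(p_1)} \;\cup\; \bigcup_{j \geq 2} \overline{D_{r_0}(p_j)},
\]
a disjoint union of closed sets. Since $f_n(\overline{D_\delta(p_1)})$ is connected and $f_n(\partial D_\delta(p_1)) \subset \overline{D_{\delta + \eta/2}(p_1)}$ (from uniform convergence on the boundary circle), the image lies entirely in $\overline{D_{\delta + \eta/2}(p_1)}$. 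Hence $d(f_n(x), x) < 2\delta + \eta/2 < \eps$ for all $x \in \overline{D_\delta(p_1)}$, contradicting the choice of $x_{n_k}$.

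The main obstacle is the last step, a topological bootstrap from convergence on $U$ to uniform convergence across the punctures. It hinges on three ingredients: the unique-preimage hypothesis, which lets statements about $f_n$ be rephrased in terms of $g_n$; the Hausdorff convergence of $\P \setminus U_n$ to the finite puncture set; and the connectedness of $f_n(\overline{D_\delta(p_1)})$. Bookkeeping the scales $\delta, \eta, r_0$ is the technical heart.
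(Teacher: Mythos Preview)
Your proof is correct and shares the paper's core strategy (Montel on the conformal inverses $g_n$, identify the limit as the identity, then upgrade to uniform convergence of $f_n$). Two points of genuine difference are worth noting.

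First, the paper splits into cases according to whether the three normalization points $a_i$ lie in $V_n$. In the general case it replaces the $a_i$ by auxiliary points $b_i \in U_1$, sets $c_i^n=g_n(b_i)$, and conjugates by the Moebius map $\phi_n$ sending $b_i \mapsto c_i^n$ to reduce to the special case. You instead argue directly that $g_n$ omits the values $a_1,a_2,a_3$ on $U_n\setminus\{a_1,a_2,a_3\}$. Your stated justification ``$g_n(a_i)=a_i$, so injectivity forces\ldots'' tacitly assumes $a_i\in U_n$; if some $a_i$ lies in the finite complement $\P\setminus U$ this equality is never literally defined. The conclusion still holds, however: $f_n(a_i)=a_i$ together with the unique-preimage hypothesis on $U_n$ forces $a_i\notin g_n(U_n\setminus\{a_i\})$ in every case. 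So your direct route works once this sentence is patched; the paper's Moebius reduction simply sidesteps the issue.

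Second, the paper dismisses the passage from $g_n\to\id$ locally uniformly on $U$ to $f_n\to\id$ uniformly on $\P$ with ``it is easy to see'', whereas you supply a full argument: a Rouch\'e step for convergence of $f_n$ on compacta of $U$, followed by a connectedness argument trapping $f_n(\overline{D_\delta(p_i)})$ in a small disk about each puncture $p_i$. Your scale bookkeeping ($r_0\le\delta+\eta/2$, the disjointness of the disks, and $2\delta+\eta/2<\eps$) is correct, and this part of your write-up is more complete than the paper's.
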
 

\begin{proof}
Let $V_n=f_n^{-1}(U_n)$. Suppose first that the three points $a_1,a_2,a_3$ are inside $V_n$ for all $n$. Consider the sequence of conformal inverses $\{f^{-1}_n \colon U_n \to V_n\}$. All of these functions are conformal on $U_1 \sm \{a_1,a_2,a_3\}$ and do not assume values  $\{a_1,a_2,a_3\}$. Hence, by Montel's theorem, $\{f^{-1}_n\}$ forms a normal family on $U_1 \sm \{a_1,a_2,a_3\}$ and we can choose a subsequence $\{f^{-1}_{i(1,n)}\}$ converging locally uniformly on $U_1$ to a conformal map. By the same reasoning, we choose a subsequence $\{f^{-1}_{i(2,n)}\}$ of $\{f^{-1}_{i(1,n)}\}$ that converges locally uniformly on $U_{2}$ and so on. Then the diagonal subsequence $\{f^{-1}_{i(m,m)}\}$ converges locally uniformly on $ U$. Since the limit is conformal on $\P$ except for finitely many points, it is in fact conformal on the whole sphere, and since it fixes $a_1,a_2,a_3$ it is the identity. Note that the same reasoning implies that we can choose a subsequence converging locally uniformly to the identity from any subsequence of the sequence $\{f^{-1}_n\}$. Therefore $\{f^{-1}_n\}$ converges locally uniformly to the identity on $U$. Since every point in $U_n$ has a unique $f_n$-preimage, it is easy to see that $\{f_n\}$ converges uniformly to the identity  as required.

We now consider the general case when $a_1,a_2,a_3$ are chosen arbitrarily. Pick points $b_1,b_2,b_3$  in $U_1$ and set $c_i^n=f^{-1}_n(b_i)$ for $i=\overline{1,3}$ so that $c_i^n \in V_n$ for all $n$. Set $g_n=f_n \circ \phi_n $ where $\phi_n$ is the Moebius transformation that sends $b_i$ to $c_i^n$. By the earlier arguments, $\{g_n\}$ uniformly converges to the identity. In particular, $g_n(\phi_n^{-1}(a_i))=f_n(a_i)=a_i$ implies that $\phi_n^{-1}(a_i) \to a_i$ for $i=\overline{1,3}$. Therefore, $\{\phi_n^{-1}\}$ converges uniformly to the identity and the general case follows.
\qed\end{proof}

\section{Extension of $\sigma_f$ to the augmented \Teich space}

\label{sec:extension}
 
 Let $\tau =\langle {h_\tau} \rangle \in \S_\Gamma$ where $\Gamma \neq \emptyset$. Let $R$ be the Riemann surface with nodes corresponding to $\tau$. We look again at the  commutative diagram~\eqref{dg1}, only this time on the bottom-right we have not the Riemann sphere but  a Riemann surface $R$ with nodes and marked points that consists of several Riemann spheres touching at the nodes with the image of $P_f$ on it (see the commutative diagram~\eqref{dg5} below). Consider the full preimage $f^{-1}\circ {h_\tau}^{-1}(N)$ of the set of nodes $N$ of $R$ on $\Sphere$. We obtain a topological noded surface $T^0$ with nodes $f^{-1}\circ {h_\tau}^{-1}(N)$ which is not necessarily stable. For example, it is not stable if a component of $f^{-1}(\gamma)$ is non-essential for some node $\gamma$. We consider the stabilisation $T^1$ of $T^0$ which is defined as follows. Every non-hyperbolic component of $(T^0,P_f)$ is a sphere with at most two marked points and nodes, hence is obtained by pinching either 
\begin{itemize}
	\item[i.] a  simple closed curve that is non-essential in $\Sphere \sm P_f$
	\item[ii.] or a pair of  simple  closed curves that are homotopic to each other in $\Sphere \sm P_f$.
\end{itemize}
In both cases, these components have a unique possible complex structure, thus they do not carry any information, and we collapse each of them to a point to produce a stable noded surface $(T^1,P_f)$. In the first case, we obtain an ordinary point of $T^1$ if the pinched non-essential closed curve was null-homotopic and a marked point of $T^1$ otherwise. In the second case, we obtain a node of $T^1$ if the pinched curves are essential, otherwise we get an ordinary point or a marked point as in the first case. Note that with this construction several adjacent components might be pinched to a single point.

Denote by $T$ the topological noded surface model of $R$.  In other words, $T$ is $R$ viewed as a topological surface. Let $\id\colon R\to T$ be the canonical homemorphism between the two surfaces.
Let $\hh =  \id \circ  h_\tau$ be the canonical projection map $\hh \colon (\Sphere,P_f) \to T$; we see that $\hh$ sends any connected component of $ {h_\tau}^{-1}(N)$ to a point and maps any connected component of $\Sphere \sm {h_\tau}^{-1}(N)$  homeomorphically onto a component of $T$ without finitely many points. Similarly, let $\hh_1$ be the canonical projection map $\hh_1  \colon (\Sphere,P_f) \to T^1$ that sends any connected component of $f^{-1}\circ {h_\tau}^{-1}(N)$ to a point; any connected component of $\Sphere \sm f^{-1}\circ {h_\tau}^{-1}(N)$  is mapped by $\hh_1$  to a point, if that component is non-hyperbolic, otherwise it is homeomorphically mapped onto a component of $T^1$ without finitely many points.
 The maps $\hh$ and $\hh_1$ are evidently injective on $P_f$. 

Each component $C^1_i$ of $T^1$ is mapped to some component $C_j$ of $T$ by a branched cover $f^{C^1_i}$  that can be defined using the following diagram which commutes on each component of $T^1$.
 
$$
\begin{diagram}
\node{(\Sphere,P_f)} \arrow{e,t}{\hh_1} \arrow{s,l}{f} \node{(T^1,\hh_1(P_f))}
\arrow{s,l}{\{f^{C^1_i}\}}
\\
\node{(\Sphere,P_f)} \arrow{e,t}{{\hh}} \node{(T,{\hh}(P_f))}
\end{diagram}
$$
 
We define maps $h_\tau^{C_j}\colon C_j \to R_j$, where $R_j$ are the corresponding components of $R$, to be the unique maps that make the following diagram commute:

$$
\begin{diagram}
\node[2]{(T,{\hh}(P_f))} \arrow{se,t}{\{h_\tau^{C_j}\}} 
\\
\node{(\Sphere,P_f)} \arrow{ne,t}{{\hh}}  \arrow[2]{e,t}{{h_\tau}} \node[2]{(R,{h_\tau}(P_f))}
\end{diagram}
$$


 We can now define a complex structure on $T^1$ component-wise by pulling back  complex structures of corresponding components of $R$ by $\{f^{C^1_i}\}$ in the same manner as we did when we defined $\sigma_f$ on $\T_f$. 
For every component $C^1_i$ we construct a commutative diagram analogous to the commutative diagram~(\ref{dg2})
\begin{equation}
\begin{diagram}
\node{C^1_i} \arrow{e,t}{h^{C^1_i}_1} \arrow{s,l}{f^{C^1_i}} \node{R^1_i}
\arrow{s,r}{f^{C^1_i}_\tau}
\\
\node{C_j} \arrow{e,t}{h^{C_j}_\tau} \node{R_j}
\end{diagram}
\label{dg3}
\end{equation}
 Note that all components on the left are topological spheres and all components on the right are Riemann spheres. Thus, the situation is exactly as above with one important exception: $f^{C^1_i}$ is not a self-map but a map between two different topological spheres.

The topological noded surface $T^1$ is now endowed with complex structure and becomes a Riemann surface with nodes $R^1$ of the same type as $R$. Let $h_1 \colon T^1 \to R^1$ be the map that acts on every component $C^1_i$ as $h_1^{C^1_i}$. The following diagram summarizes  the aforesaid.

\begin{equation}
\label{dg5}
\begin{diagram}
\node{(\Sphere,P_f)} \arrow{e,t}{\hh_1} \arrow[2]{s,l}{f} \node{(T^1,\hh_1(P_f))} \arrow{s,l}{\{f^{C^1_i}\}}  \arrow{e,t}{h_1} \node{(R^1,h_1\circ\hh_1(P_f))}  \arrow[2]{s,r}{\{f_\tau^{C^1_i}\}}
\\
\node[2]{(T,\hh(P_f))} \arrow{se,t}{\{h^{C_j}_\tau\}}
\\
\node{(\Sphere,P_f)} \arrow{ne,t}{\hh} \arrow[2]{e,t}{{h_\tau}} \node[2]{(R,{h_\tau}(P_f))}
\end{diagram}
\end{equation}

We  set $\sigma_f(\tau)=\langle \hh_1\circ h_1\rangle$. It is straightforward to check that $\sigma_f$ is now well-defined as a self-map of $\AT_f$. 


Note that this  definition of  $\sigma_f$ on the boundary of the augmented \Teich space immediately implies the following.

\begin{proposition}
\label{prop:invariantstrata}
  A stratum $\S_\Gamma$ is mapped by $\sigma_f$ into the stratum $\S_{f^{-1}(\Gamma)}$. In particular, $\sigma_f$-invariant boundary strata are in one-to-one correspondence with completely invariant multicurves.
\end{proposition}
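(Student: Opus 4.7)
The plan is to trace through the explicit construction of $\sigma_f$ given above and read off which simple closed curves on $(\Sphere,P_f)$ are pinched by the representing map of the image. Starting from $\tau=\langle h_\tau\rangle\in\S_\Gamma$, the noded Riemann surface $R$ corresponding to $\tau$ has its set of nodes $N$ arising, up to homotopy, from the multicurve $\Gamma$. The construction forms the topological noded surface $T^0$ with nodes $f^{-1}(h_\tau^{-1}(N))$, then its stabilization $T^1$, and finally equips $T^1$ with a complex structure (pulled back from $R$ component-wise via the branched covers $\{f^{C_i^1}\}$) to produce $R^1$. By definition $\sigma_f(\tau)=\langle \hh_1\circ h_1\rangle$ as a point of $\AT_f$.

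The main step is to verify that the homotopy classes of curves on $(\Sphere,P_f)$ which are sent to nodes of $R^1$ under $\hh_1\circ h_1$ form exactly the multicurve $f^{-1}(\Gamma)$. Since $h_1\colon T^1\to R^1$ is a component-wise conformal isomorphism, its nodes correspond bijectively to those of $T^1$; so it suffices to track $\hh_1$. By construction, $\hh_1$ collapses connected components of $f^{-1}(h_\tau^{-1}(N))$, and the stabilization procedure further collapses exactly the non-hyperbolic components of $T^0$, namely those coming from (i) non-essential preimage curves, which disappear entirely, and (ii) pairs of homotopic essential preimage curves, which are identified as a single node of $T^1$. The nodes that survive in $T^1$ are therefore in bijection with the homotopy classes of essential components of $f^{-1}(h_\tau^{-1}(N))$, which by the very definition at the start of Section~\ref{sec:Teich} is the multicurve $f^{-1}(\Gamma)$. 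Hence $\sigma_f(\tau)\in\S_{f^{-1}(\Gamma)}$.

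The second assertion then follows formally: by the first part, $\sigma_f(\S_\Gamma)\subseteq\S_{f^{-1}(\Gamma)}$, and since distinct multicurves label distinct strata, the invariance condition $\sigma_f(\S_\Gamma)\subseteq\S_\Gamma$ is equivalent to $f^{-1}(\Gamma)=\Gamma$, i.e., to complete invariance of $\Gamma$.

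The only delicate point is purely bookkeeping: one must check carefully that the two cases (i) and (ii) in the stabilization implement precisely the passage from the set-theoretic full preimage $f^{-1}(h_\tau^{-1}(N))$ to the multicurve $f^{-1}(\Gamma)$ of essential preimage homotopy classes. With this identification in place, both statements are immediate consequences of the explicit definition of $\sigma_f$ on $\partial\T_f$.
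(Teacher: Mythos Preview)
Your proposal is correct and matches the paper's approach: the paper gives no separate proof, stating only that the proposition follows immediately from the explicit definition of $\sigma_f$ on the boundary, and your argument is precisely the careful unwinding of that definition. One minor correction: the definition of $f^{-1}(\Gamma)$ as the multicurve of essential preimage classes appears in the ``Basic definitions'' section, not in Section~\ref{sec:Teich}.
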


Fix a  component $C':=C^1_i$ of the noded surface $T^1$ and the corresponding cover $f^{C'}$ that sends $C'$ to a component $C:=C_j$ of the noded surface $T$. Select points $a,b$ and $c$ from $P_f$ in such a way that no two points from $\{a,b,c\}$ are separated from $C$ by any single curve from $\Gamma$ (we say that $a,b,c$ \emph{single out} the component $C$). Obviously, any  triple of points (that are not on nodes) on a noded surface of genus 0 singles out exactly one of its components. Similarly, choose (possibly different) points $a',b'$ and $c'$ in $P_f$ that single out $C'$ in $T^1$. For all points $\tau$ in $\T_f$, we normalize the homeomorphisms $h_\tau$ and $h_1$ in the commutative diagram~(\ref{dg2}) so that  $h_\tau(a)=h_1(a')=0$ and $h_\tau(b)=h_1(b')=1$ and $h_\tau(c)=h_1(c')=\infty$ (or any other selected values). Since $f_{\tau}$ is defined up to pre- and post-compositions with Moebius transformations, fixing these normalization conditions defines all $f_{\tau}$ uniquely. 

Let $p^C$ be the naturally defined projection from $\Sphere$ to $C$ that sends connected components of the complement of $C$ to the nodes that separate these components from $C$ and $P_f^C:=p^C(P_f)$ be the set of nodes and marked points on $C$; define $p^{C'}$ and $P_f^{C'}$ in the same manner. Then $a$, $b$ and $c$ single out $C$ if and only if $p^C$ is injective on $\{a,b,c\}$.
  For any point $\tau \in \S_\Gamma$, we define $f^{C'}_{\tau}$ uniquely by imposing the same normalization on the functions in the commutative diagram~(\ref{dg3}): $h^C_\tau(p^C(a)) = h^{C'}_1(p^{C'}(a')) = 0$ and $h^C_\tau(p^C(b))=h^{C'}_1(p^{C'}(b'))=1$ and $h^C_\tau(p^C(c))=h^{C'}_1(p^{C'}(c'))=\infty$.

 \begin{proposition}
  Let $\{\tau_n\} \in \T_f$ be a sequence converging to a point $\tau \in \S_\Gamma$. With the normalizations as above $\{f_{\tau_n}\}$ converges uniformly to $f^{C'}_{\tau}$ on any compact set in the complement of the $h_1^{C'}$-image of the nodes.
   \label{prop:UniformConvergence}
 \end{proposition}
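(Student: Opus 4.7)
The plan is to build transition maps on the domain and codomain sides, show they converge uniformly to the identity by Proposition~\ref{prop:ParameterDependence}, and then realize $f_{\tau_n}$ as the conjugate of $f^{C'}_\tau$ by these transitions.

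On the codomain side, let $\Sigma_C \subset \Sphere$ denote the connected component of $\Sphere \sm \Gamma$ mapped homeomorphically by $\hh$ onto $C$ minus its nodes; the assumption that $a,b,c$ single out $C$ guarantees $\{a,b,c\} \subset \Sigma_C$. The convergence $\tau_n \to \tau$ in $\AT_f$ furnishes, for each compact $K \subset R_j$ disjoint from the nodes of $R_j$ and all sufficiently large $n$, a conformal embedding $\alpha_n \colon K \hookrightarrow \P$ (into the $\tau_n$-sphere) characterized by $\alpha_n \circ h_\tau^C \circ p^C = h_{\tau_n}$ on $(h_\tau^C \circ p^C)^{-1}(K) \subset \Sigma_C$. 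Evaluated at $a,b,c$, the chosen normalizations force $\alpha_n$ to fix $0$, $1$, and $\infty$. As $n \to \infty$ the domains of conformality exhaust $\P$ minus the finitely many $h_\tau^C$-images of nodes of $C$, so Proposition~\ref{prop:ParameterDependence} applies and gives $\alpha_n \to \id$ uniformly on $\P$. The identical construction on the domain side, with the triple $a',b',c'$ singling out $C'$ in $T^1$, produces conformal embeddings $\beta_n$ (on exhausting subsets of $R^1_i$) satisfying $\beta_n \circ h_1^{C'} \circ p^{C'} = h_1^{(n)}$, where $h_1^{(n)}$ denotes the top horizontal arrow of diagram~(\ref{dg2}) for $\tau_n$, and with $\beta_n \to \id$ uniformly on $\P$.

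A direct chase combining these two defining identities with $f_{\tau_n} \circ h_1^{(n)} = h_{\tau_n} \circ f$ from diagram~(\ref{dg2}), $f^{C'}_\tau \circ h_1^{C'} = h_\tau^C \circ f^{C'}$ from diagram~(\ref{dg3}), and the relation $p^C \circ f = f^{C'} \circ p^{C'}$ on $\Sigma_{C'}$ coming from the commutativity of diagram~(\ref{dg5}), yields the key identity
$$f_{\tau_n} \circ \beta_n = \alpha_n \circ f^{C'}_\tau$$
on the (exhausting) domain of $\beta_n$; equivalently $f_{\tau_n} = \alpha_n \circ f^{C'}_\tau \circ \beta_n^{-1}$ on the image of $\beta_n$. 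Since $\alpha_n, \beta_n \to \id$ uniformly on $\P$ and $f^{C'}_\tau$ is rational (hence uniformly continuous on every compact subset of $\P \sm h_1^{C'}(N)$, where $N$ denotes the set of nodes of $C'$), it follows that $f_{\tau_n} \to f^{C'}_\tau$ uniformly on every such compact subset, which is the desired conclusion.

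The main obstacle I anticipate is the precise construction of the transitions $\alpha_n$ and $\beta_n$ from the definition of the topology on $\AT_f$: one needs the thick-thin description of convergence to the boundary to produce conformal embeddings that preserve the marking and respect the chosen normalization, so that Proposition~\ref{prop:ParameterDependence} applies cleanly. Once this step is in place, the commutativity identity is merely a formal diagram chase through~(\ref{dg5}), and the convergence statement follows immediately.
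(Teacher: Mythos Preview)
Your proposal is correct and follows essentially the same strategy as the paper's proof. The paper defines $p_n^C = h_\tau^C \circ p^C \circ h_{\tau_n}^{-1}$ and $p_n^{C'} = h_1^{C'} \circ p^{C'} \circ (h_1^{(n)})^{-1}$, which are precisely the inverses of your $\alpha_n$ and $\beta_n$; it then organizes the relevant identities into a commutative cube whose right face is exactly your identity $f_{\tau_n}\circ\beta_n = \alpha_n\circ f^{C'}_\tau$, and applies Proposition~\ref{prop:ParameterDependence} to the $p_n$'s (note that the proposition is phrased for maps whose \emph{inverses} are conformal on exhausting domains, so it applies directly to $p_n^C$, $p_n^{C'}$ rather than to your $\alpha_n,\beta_n$, but this is a cosmetic difference). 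The obstacle you flag---producing the conformal transitions from the topology of $\AT_f$---is handled in the paper by observing that the marked points $h_{\tau_n}(q)$ converge to $h_\tau^C(p^C(q))$ (via continuity of cross ratios) and that the short geodesics $\gamma_{\tau_n}$ live in shrinking neighborhoods of the node images, so that representatives may be chosen making $p_n^C$ conformal onto the complement of a small neighborhood of $h_\tau^C(P_f^C)$.
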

 
 \begin{proof}
Given representing maps $h_\tau$, $h_{\tau_n}$ and  $h_1$, $h_{1,\tau_n}$, set $p^C_n=h_\tau^C\circ p^C \circ h_{\tau_n}^{-1}$ and $p^{C'}_n=h_1^{C'}\circ p^{C'} \circ h_{1,\tau_n}^{-1}$ to complete the following commutative cube, where the front side  of the cube is the commutative diagram~\eqref{dg2}, the back side of the cube is the commutative diagram~\eqref{dg3}, and the dotted arrows are the corresponding projection maps.
\begin{equation}
\begin{diagram}
 \node[2]{(C',P^{C'}_f)} \arrow[2]{e,t}{h^{C'}_1}  \arrow[2]{s,r,1}{f^{C'}} \node[2]{(\P, h_{1}^{C'}(P^{C'}_f))} \arrow[2]{s,r}{f^{C'}_\tau}
\\ \node{(\Sphere,P_f)}  \arrow[2]{e,t,1}{h_{1,\tau_n}}  \arrow{ne,t,..}{p^{C'}} \arrow[2]{s,r}{f} \node[2]{(\P, h_{1,\tau_n}(P_f))} \arrow[2]{s,r,1}{f_{\tau_n}} \arrow[1]{ne,t,..}{p^{C'}_n}
\\  \node[2]{(C,P^{C}_f)}  \arrow[2]{e,t,1}{h^{C}_\tau}  \node[2]{(\P, h^C_{\tau}(P^C_f))}
\\ \node[1]{(\Sphere,P_f)} \arrow[2]{e,t}{h_{\tau_n}} \arrow{ne,t,..}{p^{C}} \node[2]{(\P, h_{\tau_n}(P_f))} \arrow[1]{ne,t,..}{p^{C}_n}
\end{diagram}
\label{dgcube}
\end{equation}

For any point $q \in P_f$, we have $h_{\tau_n}(q) \to h^C_\tau(p^C(q))$. Indeed, for points $a,b,c$ this is by definition, for the rest of the marked points it follows from the fact that the cross ratios of marked points are continuous functions on $\AT_f$. By assumption, $l(\gamma,\tau_n) \to 0$ for all $\gamma \in \Gamma$, so with the chosen normalizations, the hyperbolic geodesics $\gamma_{\tau_n}$  on $\tau_n$ are contained in arbitrarily small spherical neighborhoods of the $h^C_\tau$ image of the corresponding nodes as $n$ goes to infinity.  Thus, since $h_{\tau_n}$ and $h_{\tau}$ are defined up to homotopy relative $P_f$, we can assume that $p_n^C$ is conformal onto $\P \sm U$ where $U$ is  a small neighborhood of  $ h^C_{\tau}(P^C_f)$.  Following the commutative diagram~(\ref{dgcube}), we see that $p_n^{C'}$ is conformal onto $\P \sm (f_\tau^{C'})^{-1}(U)$.
%
 %

 Proposition~\ref{prop:ParameterDependence} implies that both $\{p_n^{C}\}$ and $\{p_n^{C'}\}$ uniformly converge to the identity  which gives us the desired result.
 \qed\end{proof}

\begin{remark} Note that if we normalize our sequence in the same way as above, but choosing  $\{a,b,c\}$ and $\{a',b',c'\}$ so that these triples do not satisfy the conditions described above (which means that $\{a',b',c'\}$ singles out a component $C'$ of $\sigma_f(\tau)$ that does not map to the component $C$ of $\tau$ singled out by $\{a,b,c\}$), then $f_{\tau_n}$ converges uniformly on any compact set in the complement of the image of the nodes to a constant map. Indeed, with the chosen parametrization the component $C'$ becomes larger and larger as $n$ grows and is mapped into the complement of the component $C$ and this complement becomes smaller and smaller. 
\end{remark}

The last proposition is not only a useful tool for proving the next theorem but is also of interest in its own right. We see that under the normalization assumptions given above, as $\tau$ tends to a boundary point $\tau_0$ in $\AT_F$, the map $f_\tau$ deforms in a continuous fashion with respect to the locally uniform convergence in the complement of the nodes. All possible limits are either constant maps or rational  maps of possibly smaller degree that map components of $\sigma_f(\tau_0)$ to the corresponding components of $\tau_0$.

\begin{theorem}
  The map $\sigma_f$ as defined above is continuous on $\AT_f$.
  \label{thm:Continuity}
\end{theorem}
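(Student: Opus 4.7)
The plan is to reduce the continuity of $\sigma_f$ on $\AT_f$ to the continuity, in $\tau$, of the length functions $\tau \mapsto l(\gamma, \sigma_f(\tau))$ for every essential simple closed curve $\gamma \in \H$. This reduction is supplied by Proposition~\ref{prop:embedding}: the map $l\colon \AT_f \to [0,+\infty]^\H$ is a topological embedding, so $\mu_n \to \mu$ in $\AT_f$ if and only if $l(\gamma, \mu_n) \to l(\gamma, \mu)$ for every $\gamma \in \H$. Accordingly I would fix $\tau_0 \in \S_\Gamma$, a sequence $\tau_n \to \tau_0$ in $\AT_f$, and a curve $\gamma \in \H$, then verify $l(\gamma, \sigma_f(\tau_n)) \to l(\gamma, \sigma_f(\tau_0))$. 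Proposition~\ref{prop:invariantstrata} places $\sigma_f(\tau_0)$ in $\S_{\Gamma'}$, where $\Gamma'$ is the set of essential components of $f^{-1}(\Gamma)$, and I would split the analysis according to the position of $\gamma$ relative to $\Gamma'$.

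If $\gamma$ is homotopic to a curve in $\Gamma'$, then $\gamma$ is an essential preimage of some $\delta \in \Gamma$ and $f$ restricts to a cover $\gamma \to \delta$ of some degree $d\le d_f$. The Collaring Lemma gives an annulus around $\delta$ in $\tau_n$ of modulus at least $\pi/l(\delta,\tau_n)-1$; pulling it back by $f_{\tau_n}$, the component containing the image of $\gamma$ is a degree-$d$ cover, hence an annulus of modulus at least $(\pi/l(\delta,\tau_n)-1)/d$. Since $l(\delta,\tau_n) \to 0$, this modulus tends to $\infty$ and the trivial estimate $l \le \pi/M$ yields $l(\gamma,\sigma_f(\tau_n)) \to 0 = l(\gamma,\sigma_f(\tau_0))$. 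If instead $\gamma$ has positive geometric intersection with some $\delta' \in \Gamma'$, then the previous case, applied to $\delta'$, gives $l(\delta',\sigma_f(\tau_n)) \to 0$; the standard collar lemma then produces a collar around $\delta'$ of width tending to infinity that $\gamma$ must cross, forcing $l(\gamma,\sigma_f(\tau_n)) \to +\infty = l(\gamma,\sigma_f(\tau_0))$.

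The remaining case is the main content: $\gamma$ has zero intersection with every curve in $\Gamma'$ and is not homotopic to any of them, so it descends to an essential curve on a unique component $C' = C^1_i$ of $T^1$. Here I would pick triples $a',b',c'$ and $a,b,c$ in $P_f$ singling out $C'$ and its image $C$ in $T$, and apply Proposition~\ref{prop:UniformConvergence}: under these normalizations the marked points $h_{1,\tau_n}(P_f) \subset \P$ split into a subset converging to $h_1^{C'}(P_f^{C'})$ and a complementary part which collapses onto finitely many extra points (corresponding to the pinching directions in $C'$). Proposition~\ref{prop:MetricEstimate} then yields uniform convergence, on compact subsets of $\P \setminus h_1^{C'}(P_f^{C'})$, of the hyperbolic metric of $\P \setminus h_{1,\tau_n}(P_f)$ to that of the limit component $R^1_i$. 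Using the geodesic representative $\gamma^*$ of $\gamma$ on $R^1_i$ as a test curve immediately gives $\limsup l(\gamma,\sigma_f(\tau_n)) \le l(\gamma,\sigma_f(\tau_0))$.

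The matching $\liminf$ bound is the step I expect to be hardest: one must show that the geodesic representatives of $\gamma$ on $\sigma_f(\tau_n)$ stay in a compact subset of $\P \setminus h_1^{C'}(P_f^{C'})$ uniformly in $n$, so that the metric convergence can be transported into length convergence. The argument from the second case above shows that any substantial excursion of such a geodesic into the collar of a pinching curve $\delta' \in \Gamma'$ would blow up its length, contradicting the upper bound already obtained; consequently, the geodesics remain in the bulk region where the metrics converge. Making this collar-avoidance quantitatively uniform in the presence of several simultaneously pinching curves of $\Gamma'$ is the principal technical obstacle, but once handled one obtains $\liminf l(\gamma,\sigma_f(\tau_n)) \ge l(\gamma,\sigma_f(\tau_0))$, completing the proof.
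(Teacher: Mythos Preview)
Your approach is essentially the same as the paper's: reduce via Proposition~\ref{prop:embedding} to length convergence, then split into the three cases $l(\gamma,\sigma_f(\tau_0))\in\{0,\infty\}$ versus finite. Your treatment of the first two cases matches the paper's almost exactly.

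The one place where the paper is cleaner is your ``hardest step''. Rather than proving separate $\limsup$ and $\liminf$ inequalities, the paper sandwiches the hyperbolic metric $\rho_n$ of $\P\setminus h_{1,\tau_n}(P_f)$ between two auxiliary metrics: $\rho_n^1$, the metric on $\P\setminus U_n$ for a shrinking neighborhood $U_n$ of the limit marked set $h_1^{C'}(P_f^{C'})$ containing all of $h_{1,\tau_n}(P_f)$, and $\rho_n^2$, the metric on $\P$ minus one chosen point from each component of $U_n$. Schwarz gives $\rho_n^1\ge\rho_n\ge\rho_n^2$; Proposition~\ref{prop:MetricEstimate} handles $\rho_n^1\to\rho$, and $\rho_n^2\to\rho$ is just convergence in a fixed finite-dimensional \Teich space. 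This yields pointwise (hence locally uniform) convergence $\rho_n\to\rho$ directly, and then the Collaring Lemma confines all the relevant geodesics to a common compact set---exactly your collar-avoidance observation---so length convergence follows at once. Note also that your direct appeal to Proposition~\ref{prop:MetricEstimate} for full metric convergence is a little loose, since that proposition is stated for a single isolated puncture and a nested family; the sandwich is what makes it apply here. Finally, the paper explicitly reduces to sequences lying in $\T_f$ by first observing that $\sigma_f$ respects the product structure on each stratum, which you should mention to handle arbitrary $\tau_n\in\AT_f$.
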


\begin{proof}

We see that this map by definition preserves the product structure on every stratum in the following sense. If $\S_\Gamma \cong \T_1 \times \T_2 \times \ldots \times \T_n$ then for each point $\tau = (\tau^1, \tau^2, \ldots, \tau^n) \in \S_\Gamma$  we have 

$$\sigma_f(\tau)=(\sigma_1(\tau^{i_1}), \sigma_2(\tau^{i_2}) , \ldots, \sigma_m(\tau^{i_m}))\in \S_{f^{-1}(\Gamma)}$$
where $\sigma_1, \ldots, \sigma_m$ are pullback maps for the covers $f_1, \ldots, f_m$ from components of $\tau_1$ to components of $\tau$. It follows immediately that $\sigma_f$ is continuous on each stratum of $\T_f$. Since every stratum lies on the boundary of finitely many strata, 
it suffices to show sequential continuity for sequences that lie within a single stratum converging to a boundary point. Moreover, we can assume that the stratum is $\T_f$ itself; the other cases follow if we apply the same argument to $\sigma_1, \ldots, \sigma_m$.

We are going to show now that for any $\{\tau_n \}\in \T_f$ such that $\tau_n \to \tau \in \partial \T_f$, we have $l(\gamma', \sigma_f(\tau_n)) \to l(\gamma', \sigma_f(\tau))$ for every $\gamma'$, which will conclude the proof of the theorem by Proposition~\ref{prop:embedding}. Denote $\sigma_f(\tau_n)=\tau'_n$ and $\sigma_f(\tau)=\tau'$ to simplify the notation. We consider three cases: when $\gamma'$ is a node of $\tau'$, when $\gamma'$ intersects at least one of the nodes, and the rest of the homotopy classes of simple closed curves.

 First, let us look at the case when $\gamma'$ is a node of $\tau'$ (i.e. $l(\gamma', \tau')=0$). Then, by definition, $\gamma'$ is homotopic to at least one preimage of a curve $\gamma$ which is a node of $\tau$. Then $f$ maps this preimage onto $\gamma$ as a cover of some certain degree, say $d$.  The corresponding preimage $\delta$ of the geodesic homotopic to $\gamma$ in $\tau_n$ has length equal to $d \cdot l(\gamma,\tau_n)$ with respect to the Poincar\'e metric on $\P\sm f_{\tau_n}^{-1}(P_f)$. Since filling in some of the punctures decreases Poincar\'e metric we get that $l(\gamma',\tau'_n) \le l(\delta,\tau'_n) \le d \, l(\gamma,\tau_n)$. Since $l(\gamma,\tau_n) \to l(\gamma,\tau)=0$ we conclude that $l(\gamma',\tau'_n) \to 0 =l(\gamma',\tau')$.
 
 The second case is when $l(\gamma',\tau')= \infty$. In this case $\gamma'$ must have positive intersection number with at least one node $\delta$ of $\tau'$. We already know that $l(\delta,\tau'_n)$ tends to 0. By the Collaring Lemma, it follows that $l(\gamma',\tau'_n)$ tends to infinity.
 

We conclude the proof 
by showing that $l(\gamma',\tau_n') \to l(\gamma',\tau')$ for the rest of the curves (i.e., when $l(\gamma',\tau') \notin \{0,\infty\}$). Let $C'$ be the component  of $\tau'$ that contains $\gamma'$ and $C$ be the corresponding component of $\tau$. Fix normalization conditions for representing homeomorphisms $h_\tau, h_1$ and $h_{\tau_n}, h_{1,\tau_n}$ as above so that  Proposition~\ref{prop:UniformConvergence} applies. 
 
 
Denote by $P_{f,n}$ the $h_{1,\tau_n}$-image of $P_f$; let $\rho_n$ be the hyperbolic distance element  on $\P \sm P_{f,n}$ and $\rho$  be the hyperbolic distance element  on $\P \sm h^{C'}_1(P^{C'}_f)$.  Define $\rho_n^1$ to be the hyperbolic distance element on $\P \sm U_n$, where $U_n$ is a small neighborhood of  $h^{C'}_1(P^{C'}_f)$ that contains $P_{f,n}$. Recall that $p^{C'}_n=h_1^{C'}\circ p^C \circ h_{1,\tau_n}^{-1}$ converges uniformly to the identity  (see the proof of Proposition~\ref{prop:UniformConvergence}) which implies that $\{U_n\}$ can be chosen so that the intersection thereof is $h^{C'}_1(P^{C'}_f)$. Define $\rho_n^2$ to be the hyperbolic distance element on $\P \sm P'_{f,n}$ where $ P'_{f,n} \subset  P_{f,n}$ is such that $P'_{f,n}$ has exactly one point in every connected component of $U_n$. Then, by the Schwarz lemma, we have  $\rho_n^1 \ge  \rho_n \ge \rho_n^2$. 

Applying Proposition~\ref{prop:MetricEstimate}, we get that point-wise $  \rho^1_n \to \rho$. On the other hand,  we clearly have $\rho_n^2 \to \rho$ because $\{(\P,P'_{f.n})\}$ is  a converging sequence in the \Teich  space of $(\Sphere,P^{C'}_f)$.
This shows that $\rho_n \to \rho$ point-wise.

Note that geodesics on all $\P \sm P_{f,n}$ in the same homotopy class as $\gamma'$ live in a compact subset of $\P \sm h^{C'}_1(P^{C'}_f)$ by the Collaring Lemma. But on any compact set, the convergence of the hyperbolic length elements will be uniform. One easily deduces $l(\gamma',\tau_n') \to l(\gamma',\tau')$. 
Since $\gamma'$ was chosen arbitrarily, we conclude that $l(\gamma',\tau_n') \to l(\gamma',\tau')$ for all homotopy classes $\gamma'$  of simple closed curves in $(\Sphere,P_f)$. 
\qed\end{proof}

\section{Classification of invariant boundary strata}

\label{sec:strata}

As was mentioned above, every invariant boundary stratum corresponds to a completely invariant multicurve  $\Gamma$. We want to classify the topological behavior of $\sigma_f$ near invariant boundary strata $\S_\Gamma$ according to the value of $\lambda_\Gamma$. An invariant stratum $\S$ of $\AT_f$ will be called \emph{weakly attracting} if there exists a nested
decreasing sequence of neighborhoods $U_n$ such that
$\sigma_f(U_n) \subset U_n$ and $\bigcap U_n= {\S}$. An
invariant stratum $\S$ of $\AT_f$ will be called \emph{weakly
repelling} if for any compact set $K \subset \S$ there exists a
neighborhood $U \supset K$ such that every point of $U \cap \T_f$ escapes
from $U$ after finitely many iterations (for every $\tau \in U
\cap \T_f$, there exists an $n \in \N$ such that $\sigma_f^{n}(\tau) \notin U$).

\begin{proposition}
\label{prop:attracting}
   If $\Gamma=\{\gamma_1, \gamma_2, \ldots, \gamma_m \}$ is a simple obstruction, then $\S_\Gamma$ is weakly attracting.
\end{proposition}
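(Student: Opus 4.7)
The plan is to apply Proposition~\ref{prop:positive} to extract a positive vector $v=(v_1,\ldots,v_m)$ with $M_\Gamma v \ge v$, and then to produce the required nested neighborhoods $U_n$ as super-level sets of the extremal modulus functions $M(\gamma_i,\cdot)$, using the classical pullback estimate on moduli as the engine of forward invariance.

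The analytic input I would first verify is the inequality
\[
M(\gamma_i, \sigma_f(\tau)) \;\ge\; \sum_{j=1}^m m_{i,j}\, M(\gamma_j, \tau),
\]
valid for all $\tau \in \AT_f$, where $M(\gamma,\tau)$ denotes the supremum of moduli of annuli on the (possibly noded) surface representing $\tau$ that are homotopic to $\gamma$. On $\T_f$ this is standard: a near-extremal annulus $A_j$ around $\gamma_j$ lifts under the rational representative $f_\tau$ to annuli in $\sigma_f(\tau)$; the components homotopic to $\gamma_i$ have moduli $\mathrm{mod}(A_j)/d_{i,j,k}$ and, being disjoint and freely homotopic in $\sigma_f(\tau)$ across all $j$ and $k$ (because the $\gamma_j$ can be chosen with disjoint annular neighborhoods), stack inside a single annulus around $\gamma_i$. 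The inequality passes to $\AT_f$ via the continuity of $\sigma_f$ (Theorem~\ref{thm:Continuity}) and of the extended modulus.

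With this in hand I would set
\[
U_n \;=\; \bigl\{\tau \in \AT_f : M(\gamma_i, \tau) > n v_i \text{ for all } i=1,\ldots,m\bigr\}.
\]
Each $U_n$ is open: on $\T_f$ the modulus depends continuously on $\tau$, while at a noded point the Collaring Lemma yields $M(\gamma_i,\tau) > n v_i$ as soon as $l(\gamma_i,\tau) < \pi/(n v_i + 1)$, a condition that is automatic throughout a small enough neighborhood by continuity of $l$. Clearly $U_{n+1}\subset U_n$ and every $U_n$ contains $\S_\Gamma$, where the relevant moduli are all infinite. Forward invariance is then immediate from the key estimate: for $\tau\in U_n$,
\[
M(\gamma_i, \sigma_f(\tau)) \;\ge\; \sum_j m_{i,j} M(\gamma_j, \tau) \;>\; n\,(M_\Gamma v)_i \;\ge\; n v_i,
\]
so $\sigma_f(\tau) \in U_n$. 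Moreover, any $\tau$ outside the closure of $\S_\Gamma$ has some $l(\gamma_i,\tau)>0$, hence finite $M(\gamma_i,\tau)$, and escapes $U_n$ once $n$ is large enough.

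The delicate remaining point, which I expect to be the main obstacle, is that the intersection $\bigcap_n U_n$ as constructed is the set of $\tau$ with $l(\gamma_i,\tau)=0$ for every $i$, i.e., the closure $\overline{\S_\Gamma}$ including the higher strata $\S_{\Gamma'}$ with $\Gamma' \supsetneq \Gamma$. To pinpoint $\S_\Gamma$ exactly one must further cut each $U_n$ by finitely many open conditions of the form $\{l(\gamma,\cdot)>\eta_n\}$, one for each curve $\gamma$ that witnesses a strictly larger multicurve (and only finitely many such multicurves occur modulo the pure mapping class group); the bookkeeping step is to choose $\eta_n\to 0$ slowly enough that the same pullback estimate, combined with the fact that preimages of such witness curves either fall into $\Gamma$ or remain in the relevant length classes, preserves forward invariance of the refined $U_n$.
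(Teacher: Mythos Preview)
Your core argument---choosing $v>0$ with $M_\Gamma v\ge v$ and using the Gr\"otzsch-type pullback estimate on annulus moduli to get forward-invariant sets $V_n$ with $\bigcap V_n=\overline{\S_\Gamma}$---is exactly what the paper does. (A small technical point: the paper phrases $V_n$ via the existence of \emph{mutually disjoint} annuli $A_i$ with $\bmod h(A_i)>nv_i$, rather than via the individual extremal moduli $M(\gamma_i,\tau)$; this makes the stacking of preimages across different $j$ cleaner, since you need all the annuli $A_j$ disjoint at once before pulling back.)

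The place where you diverge, and where your proposal has a real gap, is the final step of cutting $\overline{\S_\Gamma}$ down to $\S_\Gamma$. Your suggestion---imposing extra open conditions $\{l(\gamma,\cdot)>\eta_n\}$ for witness curves $\gamma\notin\Gamma$ and choosing $\eta_n\to0$ ``slowly enough''---runs into trouble. There are infinitely many such $\gamma$ in $\AT_f$ (finiteness modulo the mapping class group does not help here), and more seriously, you have no mechanism guaranteeing that $l(\gamma,\sigma_f(\tau))>\eta_n$ follows from $l(\gamma',\tau)>\eta_n$ for all $\gamma'\notin\Gamma$: the available estimates (e.g.\ Proposition~\ref{prop:estimate}) give only an additive bound on $1/l$, which cannot preserve a fixed threshold $\eta_n$.

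The paper's fix is much simpler and you are missing it. One intersects $V_n$ with the open set $W=\bigcup_{\hat\Gamma\subseteq\Gamma}\S_{\hat\Gamma}$. This $W$ is $\sigma_f$-invariant for a one-line reason: a simple obstruction is automatically completely invariant, so $f^{-1}(\Gamma)=\Gamma$; then for any $\hat\Gamma\subseteq\Gamma$ one has $f^{-1}(\hat\Gamma)\subseteq f^{-1}(\Gamma)=\Gamma$, and Proposition~\ref{prop:invariantstrata} gives $\sigma_f(\S_{\hat\Gamma})\subseteq\S_{f^{-1}(\hat\Gamma)}\subset W$. Setting $U_n=V_n\cap W$ immediately yields $\sigma_f(U_n)\subset U_n$ and $\bigcap U_n=\S_\Gamma$, with no length bookkeeping needed.
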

\begin{proof}
By Proposition~\ref{prop:positive}, we can choose a vector $v>0$ such that $M_\Gamma v \ge v$. Consider $V_n \subset \AT_f$ for $n\in\N$, the set of all points $\tau= \left\langle h \right\rangle$
   of the augmented \Teich space for which there exist mutually disjoint annuli
   $A_i$ homotopic to $\gamma_i$ in $(\Sphere, P_f)$ such that $\mod h(A_i) > n v_i$ for $i=\overline{1,m}$. 
   
   Construct disjoint annuli $B_i$ for $i=\overline{1,m}$ in $(\Sphere, P_f)$ that contain the union of all components $A_{i,j,k}$ of the preimage of $A_j$ that are homotopic to $\gamma_i$. Pick a diffeomorphism $h_1$ that represents $\sigma_f(\tau)$ (i.e. $\sigma_f(\tau)=\langle h_1 \rangle$). By definition, $f_\tau =h \circ f \circ h_1^{-1}$ is holomorphic and non-ramified on all $h_1(A_{i,j,k})$. Therefore, $$\mod h_1(A_{i,j,k}) = (\deg f|_{A_{i,j,k}}\colon A_{i,j,k}\to A_j)^{-1} \mod h(A_j).$$
   It follows from the Gr\"otzsch inequality that 
   $$\mod h_1(B_i) \ge \sum_{A_{i,j,k}} \mod h_1(A_{i,j,k}) =  \sum_{A_{i,j,k}} (\deg f|_{A_{i,j,k}}\colon A_{i,j,k}\to A_j)^{-1} \mod h(A_j).$$
   If we write this inequality in vector form we get simply $$\mod h_1(B_i) \ge M_\Gamma \mod h(A_i) > M_\Gamma (nv) 
   \ge nv.$$
   Thus $\sigma_f(\tau) \in V_n$ for all $\tau \in V_n$.
   
   As mentioned above, large annuli in homotopy classes of $\gamma_i$ exist if
    and only if the lengths $l(\gamma_i,\tau)$ are short. It follows that  $\bigcap V_n =
     \overline{\S_\Gamma}$. Denote by $U_n$ the intersections of $V_n$ with the union of all strata
      $\S_{\hat{\Gamma}}$  where $\hat{\Gamma}\subseteq \Gamma$.  Then clearly $\bigcap U_n =
       \S_\Gamma$, and $U_n$ are still invariant since $\hat{\Gamma}\subseteq \Gamma$ implies
        $f^{-1}(\hat{\Gamma})\subseteq f^{-1}(\Gamma)=\Gamma$.    
\qed\end{proof}

 We will need the following (see Proposition 1.2.2 in \cite{KH})
 
\begin{proposition}
\label{prop:norm} Let $M \in \C^{m \times m}$ be a matrix such that all
eigenvalues of $M$ have absolute value strictly less than $\delta$.
Then there exists a norm $\| \cdot \|$ on $\C^m$ such that the 
operator norm satisfies $\|M\| \le \delta$. 
\end{proposition}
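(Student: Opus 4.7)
The plan is to reduce the statement to a standard normal form computation via the Jordan decomposition of $M$, and then use a diagonal rescaling trick to make the off-diagonal Jordan entries arbitrarily small. Let $\rho = \max_i |\lambda_i| < \delta$ denote the spectral radius, and fix any $\eps > 0$ with $\rho + \eps \le \delta$.

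First I would write $M = PJP^{-1}$ where $J$ is in Jordan normal form, a block-diagonal matrix whose blocks are of the form $\lambda_i I + N$ with $N$ the nilpotent super-diagonal shift. The key observation is that for the diagonal matrix $D_\eps = \operatorname{diag}(1,\eps,\eps^2,\ldots,\eps^{m-1})$, conjugation by $D_\eps$ rescales the super-diagonal $1$'s of $J$ to $\eps$'s while preserving the diagonal entries. Thus the matrix $\widetilde{J} := D_\eps^{-1} J D_\eps$ has diagonal entries $\lambda_i$ and off-diagonal entries equal to $0$ or $\eps$; in particular, each row of $\widetilde{J}$ has $\ell^1$-sum at most $|\lambda_i| + \eps \le \rho + \eps \le \delta$, so $\|\widetilde{J}\|_\infty \le \delta$ in the operator norm induced by the $\ell^\infty$ vector norm.

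Next I would transport this back to a norm on $\C^m$ itself by defining
\[
\|v\| := \bigl\|D_\eps^{-1} P^{-1} v\bigr\|_\infty.
\]
This is indeed a norm because $D_\eps^{-1} P^{-1}$ is invertible. A direct computation gives
\[
\|Mv\| = \bigl\|D_\eps^{-1} P^{-1} M v\bigr\|_\infty = \bigl\|\widetilde{J} (D_\eps^{-1} P^{-1} v)\bigr\|_\infty \le \|\widetilde{J}\|_\infty \cdot \|v\| \le \delta \|v\|,
\]
which yields $\|M\| \le \delta$ as required.

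There is no real obstacle here: the only subtlety is the bookkeeping to verify that the conjugated Jordan blocks really have $\ell^\infty$ operator norm bounded by $\rho + \eps$, which is a one-line row-sum check once the rescaling identity $D_\eps^{-1} N D_\eps = \eps N$ is in place. The entire argument is independent of the rest of the paper and is included only for the sake of later reference, as already indicated by the citation to \cite{KH}.
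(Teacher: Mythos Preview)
Your argument is correct and is the standard Jordan-form-plus-rescaling proof. Note that the paper does not supply its own proof of this proposition at all: it simply states the result and refers the reader to \cite[Proposition~1.2.2]{KH}, so there is nothing to compare against beyond observing that your write-up is essentially the argument found in that reference.
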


Recall the following analytic tool from \cite[Theorem 7.1]{DH}.

\begin{proposition}
\label{prop:estimate}
Let $X$ be a Riemann surface and $P \subset X$ a finite set. Set $X'=X \sm P$, $p=\#P$, and choose $L<2\log(\sqrt{2}+1)$. Let $\gamma$ be a simple closed geodesic on $X$, and $\{\gamma'_1,\ldots,\gamma'_s\}$ be the closed geodesics of $X'$ homotopic to $\gamma$ in $X$ and of length $<L$. Then
$$ 1/l-2/\pi-(p+1)/L < \sum_{i=1}^s 1/l'_i < 1/l +2(p+1)/\pi.$$
\end{proposition}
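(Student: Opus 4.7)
The plan is to combine the Collaring Lemma (relating the length $l$ of a short closed geodesic to the modulus of its canonical collar via $M(\gamma,X)\approx \pi/l$) with the Gr\"otzsch inequality for disjoint homotopic annuli, thereby passing between moduli on $X$ and on $X'$.

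For the upper bound I would observe that the hypothesis $L<2\log(\sqrt{2}+1)$ is the 2-dimensional Margulis constant, guaranteeing that the simple closed geodesics $\gamma'_i$ are pairwise disjoint on $X'$ and that the Collaring Lemma produces for each one an embedded collar $A'_i\subset X'$ with $\mod A'_i > \pi/l'_i - 1$, with the $A'_i$ themselves pairwise disjoint. Each $A'_i$ avoids $P$ and is freely homotopic to $\gamma$ in $X$, so the Gr\"otzsch inequality gives $\sum_i \mod A'_i \le M(\gamma,X) \le \pi/l$. Substituting and rearranging yields $\sum 1/l'_i < 1/l + s/\pi$. A topological count of pairwise non-homotopic simple closed curves in $X'$ that are parallel to $\gamma$ in $X$ --- such curves are forced to be nested parallel copies of $\gamma$ separated by points of $P$, on either side of $\gamma$ --- gives $s \le 2(p+1)$, completing the upper bound.

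For the lower bound I would reverse the picture: fix a maximal annulus $A\subset X$ homotopic to $\gamma$, so that $\mod A > \pi/l - 1$ by the Collaring Lemma, and identify $A$ conformally with a round annulus $\{1<|z|<R\}$. The set $A\cap P$ has at most $p$ points; a generic choice of at most $p+1$ separating radii decomposes $A$ into disjoint sub-annuli $E_1,\ldots,E_N \subset X'$ whose moduli sum to $\mod A$ and whose cores realize pairwise distinct free homotopy classes in $X'$. Each $\mod E_j$ is bounded above by $\pi/\ell_j$, where $\ell_j$ is the length of the geodesic representative in $X'$; if $\ell_j<L$ the class equals some $[\gamma'_i]$ and contributes $\pi/l'_i$, otherwise $\ell_j\ge L$ and the contribution is at most $\pi/L$, with at most $p+1$ such long classes. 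Summing gives $\pi/l-1 < \mod A \le \pi\sum_i 1/l'_i + (p+1)\pi/L$, which after division by $\pi$ is slightly stronger than the stated lower bound.

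The main obstacle will be executing the radial decomposition cleanly --- choosing the separating radii so that they avoid the values $|z_j|$ for $z_j\in A\cap P$, that distinct $E_j$'s realize distinct $X'$-classes, and that the total modulus equals $\mod A$ up to negligible error. This is an elementary genericity argument, but the bookkeeping of which sub-annuli fall into short versus long homotopy classes is where the error term $(p+1)\pi/L$ enters, and where care is needed to match the stated constants.
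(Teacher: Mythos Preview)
The paper does not prove this proposition at all: it is quoted verbatim as \cite[Theorem~7.1]{DH} and used as a black box. So there is no ``paper's own proof'' to compare against; your proposal is a reconstruction of the Douady--Hubbard argument, and the Collar Lemma plus Gr\"otzsch strategy you outline is indeed the standard route.

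Your sketch is essentially correct. Two small comments. First, in the upper bound your count $s\le 2(p+1)$ is looser than necessary and the justification ``on either side of $\gamma$'' is slightly muddled: curves in $X'$ homotopic to $\gamma$ in $X$ are all mutually parallel and nested, and any two non-homotopic ones must be separated by at least one point of $P$, so in fact $s\le p+1$. Either bound suffices for the stated inequality. Second, in the lower bound you do not need a \emph{maximal} annulus (which may not be attained); any embedded annulus $A\subset X$ with $\mod A>\pi/l-1$, as supplied by the Collaring Lemma, is enough. The radial decomposition then goes through exactly as you describe: with $k\le p$ points of $P$ inside $A$, the $k$ concentric circles through them cut $A$ into at most $p+1$ open sub-annuli lying in $X'$, in pairwise distinct $X'$-homotopy classes, with moduli summing to $\mod A$; bounding each modulus by $\pi/\ell_j$ and splitting into short and long classes gives the lower inequality (in fact with $1/\pi$ in place of $2/\pi$).
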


 Define $Z(\Gamma,\tau )=(1/l(\tau,\gamma_1),\ldots,1/l(\tau,\gamma_m))^T$. Then $\|Z(\Gamma,\tau)\|$  can be roughly thought as the inverse of distance between $\tau$ and the boundary of $\AT_f$, i.e. the larger $\|Z(\Gamma,\tau)\|$ is, the closer $\tau$ is to some stratum $\S_{\hat{\Gamma}}$  with $\hat{\Gamma}\subseteq \Gamma$. 

\begin{proposition}
\label{prop:contraction}
Let  $\Gamma=\{\gamma_1, \gamma_2, \ldots, \gamma_m \}$ be an invariant multicurve with $\lambda_\Gamma <1$. Pick a norm $\|\cdot\|$ for $M_\Gamma$ on $\R^m$ as in Proposition~\ref{prop:norm} with $\lambda_\Gamma<\delta<1$.
Take $$U(\Gamma)={\left\{ \tau \in \T_f | \inf_{ \gamma \notin \Gamma} l(\gamma,\tau) \ge L  \right\}},$$
where $0<L<2\log(\sqrt{2}+1)$. Then for every $\delta' > \delta$ there exists  $T(L,\delta,\delta')>0$ such that $\|Z(\Gamma,\sigma_f(\tau))\|<\delta'\|Z(\Gamma,\tau)\|$ for all $\tau \in U(\Gamma)$ with $\|Z(\Gamma,\tau)\|>T(L,\delta,\delta')$.
\end{proposition}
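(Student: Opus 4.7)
The plan is to establish a componentwise bound
$$\left|\frac{1}{l(\gamma_i, \sigma_f(\tau))} - (M_\Gamma Z(\Gamma, \tau))_i\right| \le C \qquad (i=1,\ldots,m),$$
with a constant $C = C(L, f)$ independent of $\tau \in U(\Gamma)$, and then to upgrade this into the desired norm contraction by absorbing the bounded error into the threshold $T$.

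Fix $\tau = \langle h \rangle \in U(\Gamma)$, write $\sigma_f(\tau) = \langle h_1 \rangle$, and put $X = (\P, h(P_f))$, $X' = (\P, h_1(P_f))$, $g = f_\tau \colon X' \to X$ as in diagram~\eqref{dg2}. Set $Y := \P \sm g^{-1}(h(P_f))$. Then $Y$ is a subsurface of $X'$ obtained by puncturing $X'$ at the finite set $Q := g^{-1}(h(P_f)) \sm h_1(P_f)$ of cardinality at most $(d_f-1)p_f$, and $g|_Y \colon Y \to X$ is an unramified holomorphic cover, hence a local isometry between the Poincar\'e metrics of $Y$ and $X$. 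Applying Proposition~\ref{prop:estimate} to the inclusion $Y \hookrightarrow X'$ and the geodesic representative of $\gamma_i$ on $X'$ gives
$$\left|\frac{1}{l(\gamma_i, \sigma_f(\tau))} - \sum_\alpha \frac{1}{l_Y(\alpha)}\right| \le C_1(L, d_f, p_f),$$
where $\alpha$ ranges over closed geodesics on $Y$ of length $<L$ freely homotopic on $X'$ to $\gamma_i$.

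The main step is to identify these $\alpha$'s with the preimage curves $\gamma_{i,j,k}$ used to define $M_\Gamma$. Since $g|_Y$ is a local isometry, every simple closed geodesic $\alpha$ on $Y$ projects onto a closed geodesic of $X$ covering a primitive closed geodesic $\beta$ with $l_X(\beta) = l_Y(\alpha)/d < L$ for some integer $d \ge 1$. The bound $L < 2\log(\sqrt{2}+1)$ forces short primitive closed geodesics on a hyperbolic surface to be simple (they lie in collars around simple closed geodesics), so $\beta$ is a simple closed geodesic and corresponds via $h$ to an essential simple closed curve on $(\Sphere, P_f)$. The hypothesis $\tau \in U(\Gamma)$ then forces $\beta$ to be the geodesic representative of some $\gamma_j \in \Gamma$, so $\alpha$ is a component of $g^{-1}(\gamma_j^X) \subset Y$, i.e., the geodesic representative of some $\gamma_{i,j,k}$ with $l_Y(\gamma_{i,j,k}) = d_{i,j,k}\,l(\gamma_j,\tau)$. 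Conversely, each $\gamma_{i,j,k}$ with $d_{i,j,k}\,l(\gamma_j, \tau) < L$ appears as such an $\alpha$. Hence
$$\sum_\alpha \frac{1}{l_Y(\alpha)} = \sum_{(j,k)\,:\, d_{i,j,k} l_j < L} \frac{1}{d_{i,j,k}\,l(\gamma_j,\tau)},$$
which differs from $(M_\Gamma Z(\Gamma, \tau))_i = \sum_{j,k} 1/(d_{i,j,k} l_j)$ by at most $d_f|\Gamma|/L$, since each omitted term is bounded by $1/L$ and there are at most $d_f|\Gamma|$ preimage components.

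Combining the two estimates yields the componentwise bound. All norms on $\R^m$ being equivalent, this translates into $\|Z(\Gamma, \sigma_f(\tau)) - M_\Gamma Z(\Gamma, \tau)\| \le C_3$ for some constant $C_3 = C_3(L, f, \|\cdot\|)$, and the triangle inequality with $\|M_\Gamma\| \le \delta$ gives
$$\|Z(\Gamma, \sigma_f(\tau))\| \le \delta\,\|Z(\Gamma, \tau)\| + C_3.$$
Setting $T(L, \delta, \delta') := C_3/(\delta'-\delta)$ then produces the strict inequality $\|Z(\Gamma, \sigma_f(\tau))\| < \delta'\,\|Z(\Gamma, \tau)\|$ whenever $\|Z(\Gamma, \tau)\| > T$. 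The main obstacle is the combinatorial identification of short geodesics on $Y$ with preimages of $\Gamma$-curves: the condition $\tau \in U(\Gamma)$ is precisely what rules out short geodesics on $Y$ that would project to essential curves of $X$ lying outside $\Gamma$, and the bound $L < 2\log(\sqrt{2}+1)$ is what ensures short primitive closed geodesics are simple so that the dichotomy ``either $\beta \in \Gamma$ or $l_X(\beta) \ge L$'' can be applied.
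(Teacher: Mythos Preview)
Your proof is correct and follows essentially the same route as the paper: apply Proposition~\ref{prop:estimate} to the inclusion $\P \sm f_\tau^{-1}(h(P_f)) \hookrightarrow \P \sm h_1(P_f)$, identify the short geodesics appearing in the estimate with the preimage curves $\gamma_{i,j,k}$ (using $\tau \in U(\Gamma)$ and the collar bound $L < 2\log(\sqrt 2 +1)$), and then absorb the bounded additive error into the threshold $T = C/(\delta'-\delta)$. The paper's version is terser---it only uses the upper inequality of Proposition~\ref{prop:estimate} and simply sums over all $\gamma_{i,j,k}$ rather than only the short ones---whereas you carry the two-sided bound and spell out more carefully why the short geodesics on $Y$ must project to simple curves in $\Gamma$; but the argument is the same in substance.
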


\begin{proof}
Let $\tau$ and $\tau_1=\sigma_f(\tau)$ be represented by $h$ and $h_1$ respectively.
Let us apply Proposition~\ref{prop:estimate} to the surface $X=\P \sm h_1(P_f)$ and its finite subset $P=h_1(f^{-1}(P_f) \sm P_f)$. Every geodesic on $X \sm P$ is mapped by a non-ramified cover $f_\tau$ onto a geodesic of $\P \sm h(P_f)$. Therefore, those geodesics on $X \sm P$ that are not preimages of geodesics with homotopy classes in $\Gamma$ have lengths at least $L$. Denote as before by $\gamma_{i,j,k}$ preimages of $\gamma_j$ that are homotopic to $\gamma_i$ for all pairs $i,j$. Then $l(\gamma_{i,j,k},X)=(\deg f|_{\gamma_{i,j,k}}\colon \gamma_{i,j,k}\to \gamma_j) l(\gamma_j,\tau)$.
 Also note that $\# (f^{-1}(P_f) \sm P_f) \le d_f \#P_f=d_f p_f$. Thus, we get for all $i=\overline{1,m}$:
		$$1/(l(\gamma_i,\tau_1))<2/\pi+(dp_f+1)/L+\sum_{\gamma_{i,j,k}} (\deg f|_{\gamma_{i,j,k}}\colon \gamma_{i,j,k}\to \gamma_j)^{-1} l(\gamma_j,\tau)^{-1}.$$
		Expressing these inequalities in vector form, we get $Z(\Gamma,\tau_1)<M_\Gamma Z(\Gamma,\tau)+c$, where $c$ is a constant vector depending only on $L$ with $\|c\|=C(L)$. By Proposition~\ref{prop:norm} we get: $$\|Z(\Gamma,\tau_1)\|<\|M_\Gamma Z(\Gamma,\tau)+c\| \le \|M_\Gamma\|\, \|Z(\Gamma,\tau)\|+\|c\| < \delta \|Z(\Gamma,\tau)\|+C(L).$$ It is evidently sufficient to take $T(L,\delta,\delta')=C(L)/(\delta'-\delta)$.
\qed\end{proof}

We can extend the setting of the previous proposition by considering any completely  invariant multicurve $\Gamma$ which is not a simple obstruction. We write $$ M_\Gamma = \left( 
\begin{array}{cc}
	M_{11}  & 0 \\
	M_{21}  & M_{22}
\end{array}
 \right),
$$ where the leading eigenvalue $\lambda_1$ of $M_{11}$ is less than 1. Denote by $s$ the dimensions of $M_{11}$. As before, using Proposition~\ref{prop:norm}, define a norm on $\R^s$ such that $\|M_{11}\|\le \delta<\lambda_1$. Extend this norm to a semi-norm on $\R^m$ which depends only on the first $s$ coordinates. It is straightforward to check that the proof of  Proposition~\ref{prop:contraction} works in this setting. We conclude with the following proposition.

\begin{proposition}
\label{prop:seminorm}
Let   $\Gamma=\{\gamma_1, \gamma_2, \ldots, \gamma_m \}$ be a completely invariant multicurve which is not a simple obstruction. Fix $L\in \R$ such that $0<L<2\log(\sqrt{2}+1)$ and define $$U(\Gamma)={\left\{ \tau \in \T_f | \inf_{ \gamma \notin \Gamma} l(\gamma,\tau) \ge L  \right\}}.$$
 Then there exists a semi-norm $\|\cdot\|$ on $\R^m$ and two real numbers $T(L)>0$  and $0<\delta<1$ such that  $\|Z(\Gamma,\sigma_f(\tau))\|<\delta\|Z(\Gamma,\tau)\|$ for all $\tau \in U(\Gamma)$ with $\|Z(\Gamma,\tau)\|>T(L)$.
\end{proposition}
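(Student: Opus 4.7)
The plan is to follow the recipe sketched in the paragraph immediately preceding the proposition, reducing everything to the already-established contraction argument of Proposition~\ref{prop:contraction} applied to the upper-left block. Concretely, the first step is to pick $\delta_0$ with $\lambda_1 < \delta_0 < 1$ and apply Proposition~\ref{prop:norm} to $M_{11}$ to obtain a genuine norm $\|\cdot\|_s$ on $\R^s$ whose operator norm satisfies $\|M_{11}\|_s \le \delta_0$. Extend this to a semi-norm on $\R^m$ by declaring $\|(x_1, x_2)\| := \|x_1\|_s$ whenever $x_1 \in \R^s$ and $x_2 \in \R^{m-s}$ are the natural splittings compatible with the block decomposition of $M_\Gamma$.

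Next, I would run verbatim the proof of Proposition~\ref{prop:contraction}: the hypothesis $\tau \in U(\Gamma)$ ensures that every geodesic homotopic to a curve not in $\Gamma$ on the relevant cover has length $\ge L$, so Proposition~\ref{prop:estimate} applies on the Riemann surface $\P \sm h_1(P_f)$ with finite subset $h_1(f^{-1}(P_f) \sm P_f)$ and yields the coordinate-wise vector inequality $Z(\Gamma, \sigma_f(\tau)) < M_\Gamma Z(\Gamma, \tau) + c$, where $c \in \R^m$ is a constant vector whose entries depend only on $L$, $d_f$, and $p_f$. No feature of the block structure has been used yet; this is the same input as in the previous proposition.

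The key observation, and the one that forces a semi-norm rather than a genuine norm, is that the first $s$ rows of $M_\Gamma$ are $(M_{11}\mid 0)$, so the projection of $M_\Gamma Z(\Gamma, \tau)$ onto its first $s$ coordinates equals $M_{11} Z_1$ alone — the potentially unruly blocks $M_{21}$ and $M_{22}$ contribute nothing to the semi-norm. Thus
\[
\|Z(\Gamma, \sigma_f(\tau))\| \le \|M_{11} Z_1 + c_1\|_s \le \delta_0 \|Z(\Gamma, \tau)\| + C(L),
\]
where $Z_1$ and $c_1$ are the projections onto the first $s$ coordinates and $C(L) := \|c_1\|_s$. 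Fixing any $\delta \in (\delta_0, 1)$ and setting $T(L) := C(L)/(\delta - \delta_0)$ yields the claimed strict contraction whenever $\|Z(\Gamma, \tau)\| > T(L)$. There is no serious obstacle — the content of the proposition is precisely the structural observation that the semi-norm is blind to the bottom block, so the proof mechanically reduces to the contracting case handled earlier. The only point to verify with any care is that the semi-norm is genuinely sub-multiplicative with respect to $M_\Gamma$, which is immediate from the block-triangular form.
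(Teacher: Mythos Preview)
Your proposal is correct and follows exactly the approach the paper sketches in the paragraph preceding the proposition: construct a norm on $\R^s$ adapted to $M_{11}$ via Proposition~\ref{prop:norm}, extend it to a semi-norm on $\R^m$ depending only on the first $s$ coordinates, and observe that the proof of Proposition~\ref{prop:contraction} goes through verbatim because the block-triangular form makes the first $s$ coordinates of $M_\Gamma Z$ equal to $M_{11}Z_1$. You have simply written out in full what the paper declares ``straightforward to check.''
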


\begin{corollary} 
\label{cor:repelling} If $\Gamma=\{\gamma_1, \gamma_2, \ldots, \gamma_m \}$ is a completely invariant multicurve which is not a simple obstruction, then $\S_\Gamma$ is weakly repelling.
\end{corollary}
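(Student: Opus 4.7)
The plan is to show that, given any compact $K \subset \S_\Gamma$, there is a neighborhood $U \supset K$ in $\AT_f$ from which every orbit beginning in $\T_f$ must exit in finitely many iterations; Proposition~\ref{prop:seminorm} will then supply the required contraction.

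First I would construct an open neighborhood $V$ of $K$ in $\AT_f$ on which no essential simple closed curve outside $\Gamma$ becomes too short. Fix $L \in (0, 2\log(\sqrt{2}+1))$, to be made smaller as needed. At any $\tau_0 \in K \subset \S_\Gamma$, a curve $\gamma \notin \Gamma$ either crosses a node of $\Gamma$ (so $l(\gamma,\tau_0) = +\infty$) or represents a simple closed curve inside a single component of the noded surface; in the latter case $l(\gamma,\tau_0)>0$. By the collar lemma and the compactness of $K$, only finitely many homotopy classes of curves can be short near any point of $K$, which combined with the continuity of the length functions on $\AT_f$ (Proposition~\ref{prop:embedding}) provides, after possibly shrinking $L$, an open neighborhood $V \supset K$ with $l(\gamma,\tau) > L$ for every $\tau \in V$ and every $\gamma \notin \Gamma$. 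In particular, $V \cap \T_f \subset U(\Gamma)$.

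Next I would invoke Proposition~\ref{prop:seminorm} for this $L$, obtaining a semi-norm $\|\cdot\|$ on $\R^m$, a threshold $T = T(L) > 0$, and a contraction factor $\delta \in (0,1)$. Define
$$
U \;:=\; V \cap \bigl\{\tau \in \AT_f : \|Z(\Gamma,\tau)\| > T\bigr\}.
$$
Because the semi-norm restricts to a genuine norm on the first $s$ coordinates of $\R^m$ and every such coordinate of $Z(\Gamma,\tau)$ equals $+\infty$ when $\tau \in \S_\Gamma \supset K$, the open set $\{\|Z(\Gamma,\cdot)\| > T\}$ contains all of $\S_\Gamma$, and hence $U$ is an open neighborhood of $K$ in $\AT_f$.

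Finally I would verify the escape property. Fix $\tau \in U \cap \T_f$ and write $\tau_n := \sigma_f^n(\tau)$. If some $\tau_n \notin V$, then $\tau_n \notin U$ and we are done. Otherwise every $\tau_n$ lies in $V \cap \T_f \subset U(\Gamma)$, so as long as $\|Z(\Gamma,\tau_n)\| > T$ Proposition~\ref{prop:seminorm} yields $\|Z(\Gamma,\tau_{n+1})\| < \delta\|Z(\Gamma,\tau_n)\|$. Iterating and using that $\|Z(\Gamma,\tau)\|$ is finite (since $\tau \in \T_f$), the geometric decay $\|Z(\Gamma,\tau_n)\| < \delta^n\|Z(\Gamma,\tau)\|$ forces $\|Z(\Gamma,\tau_n)\| \le T$ at some finite $n$, at which point $\tau_n \notin U$. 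The main obstacle is the construction of $V$: one must rule out that \emph{any} curve outside $\Gamma$ becomes short near $K$, not just curves from some fixed finite list. This is the standard Teichmüller-theoretic input that near a point of $\S_\Gamma$ only curves in $\Gamma$ can become short (via the collar lemma and the fact that short geodesics are uniformly separated); once this is in place, the contraction of Proposition~\ref{prop:seminorm} drives the orbit out of $U$ routinely.
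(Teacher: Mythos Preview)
Your proposal is correct and follows essentially the same approach as the paper: both construct a neighborhood of $K$ on which curves outside $\Gamma$ stay bounded below by some $L$, then intersect with $\{\|Z(\Gamma,\cdot)\|>T\}$ and use the contraction of Proposition~\ref{prop:seminorm} to force escape. The only cosmetic difference is that the paper asserts $\inf_{\tau\in K,\,\gamma\notin\Gamma} l(\gamma,\tau)=k>0$ in one line (taking $L=\tfrac12\min\{k,2\log(\sqrt2+1)\}$) and defines $U$ directly as a subset of $\T_f$ whose closure contains $K$, whereas you spell out the construction of the open neighborhood $V$ in $\AT_f$ and flag the underlying collar-lemma reason why only finitely many curves can be short near a point.
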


\begin{proof}
 For any compact $K \subset \S_\Gamma$ we have $\inf_{\tau \in K, \gamma \notin \Gamma} l(\gamma,\tau)=k>0$.   Choose $L=(1/2)\min\{k,2\log(\sqrt{2}+1)\}$. Consider $$U=\{ \tau \in \T_f | \inf_{ \gamma \notin \Gamma} l(\gamma,\tau) \ge L  \:\: \text{and} \:\: \|Z(\Gamma,\tau)\|>T\},$$  where $T=T(L)$ is as in Proposition~\ref{prop:seminorm}. Clearly $\overline{U} \supset K$. Suppose that there exists a point $\tau \in \T_f$ that does not escape from $U$, i.e., $\sigma_f^n(\tau) \in U$ for all $n$. Then, on one hand, we have $\|Z(\Gamma,\sigma_f^n(\tau))\|>T$ for all $n$ and, on the other hand, $\|Z(\Gamma,\sigma_f^{n+1}(\tau))\|<\delta\|Z(\Gamma,\sigma_f^n(\tau))\|$ which is  a contradiction.
\qed\end{proof}

\section{Proofs of Thurston's and Canonical Obstruction Theorems}
\label{sec:proofs}

In the previous section we described the behavior of $\sigma_f$ near invariant boundary strata. The understanding of the action of $\sigma_f$ near infinity plays a key role in our proof of Thurston's theorem.

The following proposition is essentially \cite[Lemma 5.2]{DH} (property \emph{iii.} is not stated in \cite{DH}, but follows from the proof given there).

\begin{proposition}
\label{prop:covers}
 There exists an intermediate cover $\M'_f$ of $\M_f$ (so that
  $\T_f \stackrel{\pi_1}{\longrightarrow} \M'_f \stackrel{\pi_2}{\longrightarrow} \M_f$ 
 are covers and $\pi_2\circ\pi_1=\pi$) such that
 \begin{itemize}
	\item[i.] $\pi_2$ is finite,
	\item[ii.]  the diagram
\begin{equation}
\label{diag:covers}
\begin{diagram}
\node{\T_f} \arrow[2]{e,t}{\sigma_f} \arrow{se,l}{\pi_1} \arrow[2]{s,l}{\pi} \node[2]{\T_f}
\arrow[2]{s,l}{\pi}
\\
\node[2]{\M'_f} \arrow{se,t}{\sigmat_f} \arrow{sw,t}{\pi_2}
\\
\node{\M_f}  \node[2]{\M_f}
\end{diagram}
\end{equation}
 commutes for some map ${\sigmat_f} \colon \M'_f \to
 \M_f$,
 \item[iii.] If  $\pi_1(\tau_1)=\pi_1(\tau_2)$ then $f_{\tau_1}=f_{\tau_2}$ up to pre- and post-composition by Moebius transformations.
 
 In particular, for every $m \in \M_f$ there are only finitely many different $f_\tau$ when $\tau \in \pi^{-1}(m)$.
\end{itemize}

\end{proposition}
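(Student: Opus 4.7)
\begin{proofof}[of Proposition~\ref{prop:covers} (plan)]
The plan is to take $\M'_f$ to be a quotient of $\T_f$ by an appropriate finite-index subgroup $H$ of the pure mapping class group $G=\mathrm{PMod}(\Sphere,P_f)$, chosen so that $\sigma_f$ becomes ``descendable'' modulo $H$ on the source side but still modulo all of $G$ on the target side. Recall that $G$ acts on $\T_f$ by precomposition, $[\phi]\cdot\langle h\rangle=\langle h\circ\phi^{-1}\rangle$, with quotient $\M_f$. Define $H$ to be the subgroup of mapping classes $[\phi]\in G$ that lift through $f$, i.e.\ for which there is a (pure) mapping class $[\psi]\in G$ with $\phi\circ f\simeq f\circ\psi$ rel $P_f$. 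Set $\M'_f:=\T_f/H$, let $\pi_1\colon\T_f\to\M'_f$ and $\pi_2\colon\M'_f\to\M_f$ be the two covering maps coming from the tower of subgroups $\{1\}<H<G$; they are covers by construction and satisfy $\pi_2\circ\pi_1=\pi$.

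Next I would verify the equivariance that delivers $\tilde\sigma_f$ and property \emph{iii}. Given $[\phi]\in H$ with lift $[\psi]$, a direct computation with the definition of $\sigma_f$ shows $\sigma_f([\phi]\cdot\tau)=[\psi]\cdot\sigma_f(\tau)$. Indeed, writing $\tau=\langle h\rangle$ and $\sigma_f(\tau)=\langle h_1\rangle$ (so $h_1^*\mu_0=f^*h^*\mu_0$), the relation $\phi\circ f=f\circ\psi$ gives $f^*(\phi^{-1})^*=(\psi^{-1})^*f^*$ on measurable structures, hence
\[
f^*(h\circ\phi^{-1})^*\mu_0=(\psi^{-1})^*h_1^*\mu_0=(h_1\circ\psi^{-1})^*\mu_0,
\]
so $\sigma_f(\langle h\circ\phi^{-1}\rangle)=\langle h_1\circ\psi^{-1}\rangle$. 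Since $[\psi]\in G$, applying $\pi$ yields $\pi(\sigma_f([\phi]\cdot\tau))=\pi(\sigma_f(\tau))$ for every $[\phi]\in H$, which is exactly the condition needed for $\pi\circ\sigma_f$ to factor through $\pi_1$ as a well-defined $\tilde\sigma_f\colon\M'_f\to\M_f$ making the diagram commute. For property \emph{iii}, if $\pi_1(\tau_1)=\pi_1(\tau_2)$ then $\tau_2=[\phi]\cdot\tau_1$ for some $[\phi]\in H$ and the equivariance just established says that once $h_{\tau_i}$ and $h_{1,\tau_i}$ are chosen to represent $\tau_i$ and $\sigma_f(\tau_i)$, the two ensembles differ by precomposition with $\phi^{-1}$ and $\psi^{-1}$ respectively; plugging these into $f_\tau=h_\tau\circ f\circ h_1^{-1}$ and using $\phi\circ f=f\circ\psi$ shows $f_{\tau_1}$ and $f_{\tau_2}$ coincide up to the freedom of post-composing $h_\tau$ and $h_1$ with Moebius transformations, i.e.\ up to pre- and post-composition by Moebius transformations.

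It remains to establish finiteness of $\pi_2$, equivalently that $[G:H]<\infty$. Viewing the unbranched restriction $f\colon\Sphere\sm f^{-1}(P_f)\to\Sphere\sm P_f$ as an ordinary covering of degree $d_f$, a mapping class $[\phi]\in G$ lifts through $f$ precisely when $\phi_*\colon\pi_1(\Sphere\sm P_f,*)\to\pi_1(\Sphere\sm P_f,*)$ preserves the finite-index subgroup $f_*\pi_1(\Sphere\sm f^{-1}(P_f),\tilde*)$ (up to conjugacy) together with the additional requirement that the lift fixes $P_f$ pointwise, which can be imposed using the fact that $f(P_f)\subseteq P_f$. Since there are only finitely many subgroups of $\pi_1(\Sphere\sm P_f)$ of a given finite index and since $G$ permutes them, the stabilizer $H$ has finite index in $G$; hence $\pi_2$ is a finite cover.

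The main obstacle, in my view, is not the formal construction of $\M'_f$ or the equivariance of $\sigma_f$ --- these follow almost directly from the definitions --- but the bookkeeping needed to show that the subgroup $H$ really has finite index in $G$ while simultaneously respecting the requirement that both $\phi$ and its lift $\psi$ act as \emph{pure} mapping classes (i.e.\ fix $P_f$ pointwise). The cleanest route is via the subgroup-preservation characterization above together with the finiteness of subgroups of a fixed finite index, but one must check that lifts can always be arranged to fix $P_f$ pointwise rather than merely setwise; this is where the assumption $f(P_f)\subseteq P_f$ and the degree bound $d_f$ enter, and is precisely the content of \cite[Lemma 5.2]{DH} that this proposition is quoting.
\qed\end{proofof}
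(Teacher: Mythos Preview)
Your proposal is correct and follows exactly the standard construction; note that the paper does not give its own proof of this proposition but simply cites \cite[Lemma~5.2]{DH}, and your sketch is precisely that argument (define $H$ as the liftable subgroup, verify equivariance $\sigma_f([\phi]\cdot\tau)=[\psi]\cdot\sigma_f(\tau)$, and use finiteness of index-$d_f$ subgroups of $\pi_1(\Sphere\sm P_f)$ for $[G:H]<\infty$). The one point you flag --- ensuring the lift $\psi$ can be taken to fix $P_f$ pointwise --- is handled by passing, if necessary, to the finite-index subgroup of liftable classes whose (unique) lift acts trivially on the finite set $f^{-1}(P_f)$; this does not affect any of the conclusions.
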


\begin{remark} 
\label{rk:covers}
Note that $\M'_f$ is a quotient of $\T_f$ by a subgroup $G$ of the pure mapping class group of finite index. Then the quotient $\AM'_f$ of $\AT_f$ by the same subgroup will be a compactification of $\M'_f$. The covers $\pi_1$ and $\pi_2$ can be extended to  the corresponding augmented spaces so that $\pi_2\circ\pi_1=\pi$ still holds. As in the case of the compactified moduli space, we parametrize boundary strata by equivalence classes of multicurves, two classes of simpled closed curves being equivalent if one can be mapped to the other by the action of $G$. Evidently, the whole diagram above also extends to the augmented spaces.
\end{remark}

 Since the pure mapping class group acts by isometries with respect to both the \Teich and Weil-Petersson metrics, both metrics can be projected to $\M_f$ and $\M'_f$.

\begin{proofof}{\bf of Theorem~\ref{thm:Thurston} (Thurston's Theorem)}

{\bf Necessity of criterion.} Suppose $\Gamma$ is a Thurston obstruction. By Proposition~\ref{prop:fixedpts}, it is enough to show that $\sigma_f$ has no fixed points in $\T_f$. Suppose, by contradiction, that $\tau$ is a fixed point, we know then that every forward orbit must converge to $\tau$ (see Section~\ref{sec:Teich}). We may assume that $\Gamma$ is simple. By Proposition~\ref{prop:attracting}
the stratum $\S_\Gamma$ is weakly attracting, so we can choose an invariant neighborhood $U$ of $\S_\Gamma$ that does not contain $\tau$. But then orbits of points from $U$ are contained in $U$ and can not converge to $\tau$, which is a contradiction.

{\bf Sufficiency of criterion.}
Pick a point $\tau_0 \in \T_f$ and set $\tau_n=\sigma_f^n(\tau)$. Consider the projection of $\{\tau_n\}$ to  $\M'_f$: $m'_n=\pi_1(\tau_n)$. Let $D=d_T(\tau_0,\tau_1)$ be the \Teich distance between $\tau_0$ and $\tau_1$. Since $\sigma_f$ is weakly contracting with respect to the \Teich metric, $d_T(\tau_i,\tau_{i+1}) \le D$ for all $i$. Let $m'$ be an accumulation point of $\{m'_n\}$ in $\AM'_f$ that belongs to a stratum of minimal possible dimension. 

If $m' \in \M'_f$, then $\{\tau_n\}$ converges in $\T_f$. Indeed, by part \emph{iii} of  Proposition~\ref{prop:covers}, the norm of the coderivative $(d\sigma_f)^*=(f_\tau)_*$ depends only on $\pi_1(\tau)$. 
We know that  $\|(d\sigma^k_f)^*\|_T <1$, for some $k\in\N$, because $f$ is a  Thurston map with hyperbolic orbifold. For the sake of simplicity, we assume that $\|(d\sigma_f)^*\|_T <1$ (if not, we apply the same argument for $\sigma^k_f$).
 On the $\pi_1$-preimage of any compact subset $K$ of $\M'_f$, we then have $\sup_{\tau \in \pi^{-1}(K)} \|(d\sigma_f)^*\|_T <1$. It follows that the \Teich distance between $\tau_n$ and $\tau_{n+1}$ is contracted by some definite factor $\lambda<1$ by $\sigma_f$ (i.e. $d_T(\tau_{n+1},\tau_{n+2}) \le \lambda d_T(\tau_{n},\tau_{n+1})$) when $\pi_1(\tau_n) \in K$.
We take $K$ to be a closed ball of radius $r$ around $m'$ in $M'_f$. 
Then infinitely many $\tau_n$ are in $K$, thus $d_T(\tau_n,\tau_{n+1})$ tends to 0. 

Take $N$ such that the distance between $\tau_N$ and $\tau_{N+1}$ is smaller than $r(1-\lambda)/2$ and the distance between $m'_N$ and $m'$ is less than $r/2$.
Since $m'_N \in K$ we get  $d_T(\tau_{N+1},\tau_{N+2}) \le \lambda d_T(\tau_{N},\tau_{N+1}) <\lambda r(1-\lambda)/2$. Since $d_T(\tau_N,\tau_{N+1} )\sum_i \lambda^i <r/2$, we see by induction that $m'_{N+k} \in K$ and  $d_T(\tau_{N+k}, \tau_{N+k+1}) \le \lambda^k d_T(\tau_{N}, \tau_{N+1})$ for all $k\in \N$. Thus  $\{\tau_n\}$ converges at least as fast as a geometric series to a fixed point of $\sigma_f$. In this case, $f$ is Thurston equivalent to a rational function.


From now on we assume that $m' \in \S_{[\Gamma']}$ with $\Gamma' =\{\gamma'_1,\ldots,\gamma'_s\}\neq \emptyset$. Since we chose $m'$ on the stratum of minimal possible dimension, it follows that there exists $L \in (0,2\log(\sqrt{2}+1))$ such that for all $\tau_n$ there exist at most $s$ different simple closed geodesics of length less than $L$. The Collaring Lemma implies that these geodesics are mutually disjoint. Choose $L_1>0$ satisfying $L_1<e^{-D}L/d_f$ and $1/L_1>e^{D}(2/\pi+(d_f p_f+1)/L)$. Consider $n$ such that $l(\gamma_i,\tau_n)<L_1$ with $\gamma_i \in [\gamma'_i]$ for all $i=\overline{1,s}$. We claim that $\Gamma_n=\{\gamma_1,\ldots,\gamma_s\}$ is completely invariant. 

Indeed, since $\log l(\gamma,\tau)$ is $1$-Lipschitz, we have $e^{-D} l(\gamma,\tau_{n})\le l(\gamma,\tau_{n+1}) \le e^{D}l(\gamma,\tau_{n})$. On the other hand, every essential preimage of any $\gamma_i$ has length at most $d_f L_1<e^{-D}L$ so it must be homotopic to a curve in $\Gamma_n$; this proves  invariance of $\Gamma_n$. If some $\gamma_i$ were homotopic to no preimage of curves from $\Gamma_n$, then by Proposition~\ref{prop:estimate} we would get $1/l(\gamma_i,\tau_{n+1})<2/\pi+(d_f p_f+1)/L<e^{-D}/L_1<e^{-D}/l(\gamma_i,\tau_n)$; this proves  complete invariance.

Take a subsequence $\{m'_{n_k}\}$ that converges to $m'$ and such that for each $n_k$ there exist 
$\gamma_i \in [\gamma'_i]$ for $i=\overline{1,s}$ such that $l(\gamma_i,\tau_{n_k})<L_1$. Define $\Gamma_{n_k}$ as above for each $k$. Then for any pair $j,k$ there exists an element $g$ of the deck transformation group $G$ corresponding to the covering $\pi_1$ (see Proposition~\ref{prop:covers} and Remark~\ref{rk:covers}) such that $g(\Gamma_{n_k})=\Gamma_{n_j}$. Consider a pair of points $\tau$ and $g(\tau)$;  the commutative diagram~(\ref{diag:covers}) yields $$\pi(\sigma_f(g(\tau)))=\sigmat_f(\pi_1(g(\tau)))=\sigmat_f(\pi_1(\tau))=\pi(\sigma_f(\tau)).$$
Hence, there exists an element $h$ of the pure mapping class group such that $\sigma_f(g(\tau))=h(\sigma_f(\tau))$. Since both $\Gamma_{n_j}$ and $\Gamma_{n_k}$ are completely invariant, we get $$g(\Gamma_{n_k})=\Gamma_{n_j}=f^{-1}(\Gamma_{n_j})=f^{-1}(g(\Gamma_{n_k}))=h(f^{-1}(\Gamma_{n_k}))=h(\Gamma_{n_k}).$$
  This implies that $h(\gamma)=g(\gamma)$ for all $\gamma \in \Gamma_{n_k}$. It follows that $\Gamma_{n_k}$ have the same Thurston matrix $M$ for all $k$.

By assumption $\Gamma_{n_1}$ is not an obstruction. 
Select $T(L)$ and  a semi-norm on $\R^s$ as in Proposition~\ref{prop:seminorm}.
Consider the sets 
$$U(\Gamma_{n_k},T)=\left\{ \tau \in \T_f | \inf_{ \gamma \notin \Gamma_{n_k}} l(\gamma,\tau
    ) \ge L \:\: \text{and} \:\: \|Z(\Gamma_{n_k},\tau)\|>T\right\},$$    
where $T \ge e^{D}T(L)$ 
 and is large enough so that $\tau_{1} \notin U(\Gamma_{n_k},T)$ for all $k$ (this is possible since for $k=\overline{1,\infty}$  the set $Z(\Gamma_{n_k},\tau)$ is bounded  in $\R^s$ for all $\tau \in \T_f$, because there are only finitely many short curves on the Riemann surface corresponding to $\tau$). Since $\{m'_{n_k}\}$ converges to $m'$, we can pick the smallest $n=n_k$ such that $\tau_{n} \in U(\Gamma_{n},T)$. We use the Lipschitz condition again to get that $\tau_{n-1} \in U(\Gamma_n,Te^{-D})$. Proposition~\ref{prop:seminorm} yields $\|Z(\Gamma_n,\tau_{n})\|< \delta\|Z(\Gamma_n,\tau_{n-1})\|$ because $Te^{-D}\ge T(L)$; in particular, we see that $\tau_{n-1} \in U(\Gamma_{n},T)$. 
 Continuing by induction, we see that $\tau_{n_{k-1}} \in U(\Gamma_{n},T)$ which is a contradiction. Therefore, $\Gamma_{n_1}$ is a simple obstruction and we are done.
\qed\end{proofof}

\begin{proofof}{\bf of Theorems~\ref{thm:CanonicalObstruction} and \ref{thm:curvebound}}

As above, pick a point $\tau_0 \in \T_f$ and set $\tau_n=\sigma_f^n(\tau), m'_n=\pi_1(\tau_n)$. If $f$ is Thurston equivalent to a rational map then $\tau_n$ converges in $\T_f$ and hence $\Gamma_f=\emptyset$.

If $f$ is not Thurston equivalent to a rational map, then there exists an accumulation point  $m'$ of $\{m'_n\}$ on a boundary stratum $\S_{[\Gamma']}$ of $\M'_f$ of smallest dimension possible. As we have shown in the previous proof, if $\tau_n$ is close enough to the stratum $\S_\Gamma$ of $\T_f$ where $\Gamma \in [\Gamma']$ then $\Gamma$ is a simple obstruction. Proposition~\ref{prop:attracting} tells us that once $\tau_n$ is in a small neighborhood of $\S_\Gamma$  then $\tau_m$ stays in that neighborhood for all $m>n$. Therefore, the accumulation set of $\{m'_n\}$ must be a subset of $\overline{\S_{[\Gamma']}}$, moreover $l(\gamma,\tau_n) \to 0$ for all $\gamma \in \Gamma$. As $\S_{[\Gamma']}$ was a stratum of minimal dimensions to contain an accumulation point of $\{m'_n\}$, it follows that the accumulation set of $\{m'_n\}$ lies in a compact subset of $\S_{[\Gamma']}$, and thus $l(\gamma,\tau_n)>L$ for all $n=\overline{1,\infty}$ and $\gamma \notin \Gamma$ with some constant $L>0$. This shows that $\Gamma$ is the canonical obstruction for $f$ and proves the statement of Theorem~\ref{thm:curvebound}.
\qed\end{proofof}

\section{The Thurston Boundary of the \Teich space}
\label{sec:ThurstonBoundary}

For a more detailed introduction to the notion of the Thurston boundary we address the reader to \cite{IT}. Let $S$ be the set of all free homotopy classes of simple closed curves on $\Sphere \sm P_f$.
Then the function $l(\gamma, \tau) \colon S \times \T_f \to \R^+$ can be viewed as a map from $\T_f$ to $\R\P^S$. This map can be proven to be an analytic injection (cf. Section~\ref{sec:Aug}) with the image homeomorphic to a ball of the same dimension as $\T_f$. The boundary of this ball in $\R\P^S$ is called the Thurston boundary of $T_f$. Points of the Thurston boundary are represented by positive real-valued functions on $S$ where two functions correspond to the same point if and only if their ratio is constant. For example, for any $\gamma \in S$ the topological intersection number $\langle \gamma, \cdot \rangle$ is a function on $S$ corresponding to a point on the Thurston boundary and the set of all such points is dense. The Thurston boundary can be identified with the set $\mathcal{PMF}$ of projective measured foliations \cite{T}. We will use the following basic fact.

\begin{proposition}
\label{prop:convtb}
Suppose that for a sequence $\{\tau_n\}\in \T_f$, the lengths $\{l(\gamma,\tau_n)\} \to 0$ and $l(\delta,\tau_n)>\eps$ for   all $n\in\N$ and all $\delta$ that are not homotopic to $\gamma$, with some $\eps>0$. Then $\{\tau_n\} $ converges to the point $ \langle \gamma,\cdot\rangle$ in the Thurston boundary.
\end{proposition}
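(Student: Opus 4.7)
The plan is to find positive constants $c_n$ such that $c_n \cdot l(\delta, \tau_n) \to \langle \gamma, \delta \rangle$ for every $\delta \in S$, which is exactly the statement that the projective class $[l(\cdot, \tau_n)]$ converges to $[\langle \gamma, \cdot \rangle]$ in $\R\P^S$. The natural normalizing scale is the hyperbolic width $w_n$ of a standard collar around the geodesic representative of $\gamma$ on $\tau_n$; the Collaring Lemma recalled in Section~\ref{sec:Teich} guarantees modulus at least $\pi/l(\gamma, \tau_n) - 1$, and so the corresponding collar width $w_n$ is of order $\log(1/l(\gamma, \tau_n))$ and tends to infinity.

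The lower bound is straightforward. For every $\delta \in S$, the geodesic representative of $\delta$ on $\tau_n$ must cross this collar at least $\langle \gamma, \delta \rangle$ times, and each crossing contributes hyperbolic length at least $w_n$. Hence $l(\delta, \tau_n) \ge \langle \gamma, \delta \rangle \, w_n$, which immediately gives $\liminf_n l(\delta, \tau_n)/w_n \ge \langle \gamma, \delta \rangle$.

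The matching upper bound is the main technical obstacle. Here one uses the hypothesis $l(\mu, \tau_n) \ge \eps$ for all $\mu \not\sim \gamma$: by Mumford's compactness theorem, the complement of the standard collar of $\gamma$ in $\tau_n$ stays in a compact family of hyperbolic surfaces with boundary, up to the action of the pure mapping class group of this complement. Consequently there is a constant $C_\delta$ depending only on $\delta$ such that $\delta$ can be realized as a curve consisting of $\langle \gamma, \delta \rangle$ orthogonal traversals of the collar (each of length $w_n$ up to bounded error) together with a bounded collection of arcs in the thick part of total length at most $C_\delta$. Minimality of the geodesic then yields $l(\delta, \tau_n) \le \langle \gamma, \delta \rangle \, w_n + C_\delta$.

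Combining both bounds, $l(\delta, \tau_n)/w_n \to \langle \gamma, \delta \rangle$ for every $\delta \in S$: the cases $\delta = \gamma$ and $\delta$ disjoint from but not homotopic to $\gamma$ both give limit zero, in accordance with $\langle \gamma, \delta \rangle = 0$ (in the first case because $l(\gamma, \tau_n) \to 0$, in the second because $l(\delta, \tau_n)$ is bounded by the same compactness argument applied to the thick part). Setting $c_n = 1/w_n$ exhibits projective convergence of $l(\cdot, \tau_n)$ to $\langle \gamma, \cdot \rangle$ in $\R\P^S$, which is precisely the statement that $\tau_n$ converges to the point $\langle \gamma, \cdot \rangle$ of the Thurston boundary. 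The hard step is the upper bound, where the off-collar geometry must be controlled uniformly in $n$ via Mumford compactness together with the assumed lower bound on the lengths of all non-$\gamma$ curves.
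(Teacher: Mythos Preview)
The paper does not prove this proposition; it is stated as a ``basic fact'' and left without argument, so there is no proof to compare against. I therefore focus on the soundness of your sketch.

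Your lower bound is fine. The gap is in the upper bound. You invoke Mumford compactness to say that the complement of the $\gamma$-collar stays in a compact family of hyperbolic surfaces with boundary \emph{up to the action of the mapping class group of the complement}, and then conclude that the arcs of a fixed homotopy class $\delta$ in the thick part have length bounded by a constant $C_\delta$. This inference fails: compactness is only in moduli, and the mapping class group action moves the homotopy class of those arcs. So $C_\delta$ depends on the marking and is not uniform in $n$.

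In fact the stated hypotheses do not suffice. Take a sphere with five marked points $p_1,\dots,p_5$, let $\gamma$ separate $\{p_1,p_2\}$ from the rest and let $\alpha$ (disjoint from $\gamma$) separate $\{p_4,p_5\}$ from the rest. In Fenchel--Nielsen coordinates for the pants decomposition $\{\gamma,\alpha\}$ set $l(\gamma,\tau_n)=1/n$, keep $l(\alpha,\tau_n)$ and $\theta_\gamma$ fixed, and let the $\alpha$-twist go to infinity linearly in $n$. Every simple closed curve other than $\gamma$ has length bounded below (it is $\alpha$, or it crosses the $\gamma$-collar, or it crosses $\alpha$ and is stretched by the twist), so the hypotheses hold. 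But for a curve $\delta$ meeting $\alpha$ and not $\gamma$ one has $l(\delta,\tau_n)\asymp n$, whereas for a curve $\beta$ meeting $\gamma$ and not $\alpha$ one has $l(\beta,\tau_n)\asymp\log n$. Hence $l(\delta,\tau_n)/l(\beta,\tau_n)\to\infty$, which is incompatible with projective convergence to $\langle\gamma,\cdot\rangle$; the Thurston limit here is $\langle\alpha,\cdot\rangle$.

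What rescues the two applications in Section~\ref{sec:ThurstonBoundary} is that in each case the sequence actually converges in the \emph{augmented} \Teich space to a point of $\S_{\{\gamma\}}$. Under that extra hypothesis the lengths $l(\delta,\tau_n)$ for $\delta$ disjoint from $\gamma$ converge and are therefore bounded above; your collar/thick-part decomposition then gives $l(\delta,\tau_n)=\langle\gamma,\delta\rangle\,w_n+O(1)$ for every $\delta$ and the conclusion follows. So your approach is correct once one strengthens the hypothesis to what the paper actually uses, but not under the hypothesis as literally written.
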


Thurston's pullback map $\sigma_f$ can be decomposed into a composition of two maps as follows.
Suppose $g \colon R_1 \to R_2$ is a covering map between two surfaces $R_1$ and $R_2$ of finite type. Then one can define the usual pullback map $g^* \colon \T(R_2) \to \T(R_1)$. If  $i \colon R_1 \to R_2$ is an inclusion map between two surfaces $R_1$ and $R_2$ of finite type (that is a map that fills in some of the punctures of $R_1$) then one can define the push-forward map $i_* \colon \T(R_1) \to \T(R_2)$ (also called the forgetful map) that just forgets the information about the erased punctures. In our setting, we have $\sigma_f =i_* \circ g^*$ where $g=f|_{\Sphere \sm f^{-1}(P_f)}$ and $i=\id|_{\Sphere \sm f^{-1}(P_f)}$.

It is evident that the action of $g^* \colon \T_f \to \T(\Sphere,f^{-1}(P_f))$ could be continuously extended to the Thurston boundary using the natural action of $g^*$ on measured foliations. However, there is no natural way to push measured foliations forward. 

\begin{proposition}
 If $p_f>3$ then $i_*$ has no continuous extension to the Thurston boundary of $\T(\Sphere,f^{-1}(P_f))$.
\end{proposition}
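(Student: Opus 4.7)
The plan is to exhibit two sequences $\{\tau_n^{(1)}\}$ and $\{\tau_n^{(2)}\}$ in $\T(\Sphere, f^{-1}(P_f))$ converging to the same point of the Thurston boundary but whose $i_*$-images converge to different points in $\T_f$; this clearly rules out any continuous extension.

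First I would use the hypothesis $p_f>3$ in two ways. On the one hand, it gives $\dim \T_f\ge 1$, so $\T_f$ contains at least two distinct points $\tau', \tau''$. On the other hand, a direct Riemann--Hurwitz count (using $\sum_{p\in\Sphere}(\deg_p f-1)=2d_f-2$) gives $|f^{-1}(P_f)|> p_f$, so there is some $q\in f^{-1}(P_f)\sm P_f$. Fix such a $q$ together with an arbitrary $p_0\in P_f$, and let $\gamma$ be a simple closed curve bounding a small disk containing exactly the pair $\{p_0,q\}$. The bounded side of $\gamma$ contains two points of $f^{-1}(P_f)$ and the unbounded side contains $|f^{-1}(P_f)|-2\ge 3$, so $\gamma$ is essential in $(\Sphere,f^{-1}(P_f))$; but the bounded side contains only the one point $p_0\in P_f$, so $\gamma$ is non-essential in $(\Sphere,P_f)$.

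Next I would represent $\tau'$ and $\tau''$ by diffeomorphisms $h',h''\colon(\Sphere,P_f)\to \P$, normalized so that three chosen points of $P_f$ (including $p_0$) go to $0,1,\infty$, with $h'(p_0)=h''(p_0)=0$. For each $n$, I extend $h^{(i)}$ to a diffeomorphism $h_n^{(i)}\colon(\Sphere, f^{-1}(P_f))\to\P$ that restricts to $h^{(i)}$ on $P_f$, sends $q$ to the point $1/n$, and sends the remaining points of $f^{-1}(P_f)\sm(P_f\cup\{q\})$ to fixed positions uniformly separated from $\{0\}$ and from $h^{(i)}(P_f)$. Set $\tau_n^{(i)}=\langle h_n^{(i)}\rangle$. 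I would then check the hypotheses of Proposition~\ref{prop:convtb}: by construction, the only pair of marked points coalescing is $(p_0,q)$, so $l(\gamma,\tau_n^{(i)})\to 0$, while every essential simple closed curve $\delta\not\simeq\gamma$ is either disjoint from $\gamma$ (so that $\delta$ lives on the ``large'' side of the limiting noded surface in $\AT(\Sphere,f^{-1}(P_f))$ and has a positive bounded hyperbolic length) or intersects $\gamma$ (so that the Collaring Lemma forces $l(\delta,\tau_n^{(i)})\to\infty$). Combined with the fact that on any Riemann surface only finitely many simple closed geodesics can have length below a given threshold, this yields a single $\eps>0$ bounding $l(\delta,\tau_n^{(i)})$ from below uniformly in $n$ and $\delta$; hence both sequences converge to the single Thurston boundary point $\langle\gamma,\cdot\rangle$.

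Finally, $i_*$ simply forgets the extra punctures: $h_n^{(i)}$ viewed as a map $(\Sphere,P_f)\to\P$ agrees with $h^{(i)}$ on $P_f$ and is therefore isotopic to $h^{(i)}$ rel $P_f$, so $i_*(\tau_n^{(i)})=\tau^{(i)}$ for every $n$. Consequently $i_*(\tau_n^{(1)})\to\tau'$ and $i_*(\tau_n^{(2)})\to\tau''$, and since $\tau'\ne\tau''$, any continuous extension of $i_*$ to the Thurston boundary would be forced to assign two different values to the single point $\langle\gamma,\cdot\rangle$, the desired contradiction. I expect the only delicate step to be the uniform lower bound $\eps>0$ on the lengths of curves not homotopic to $\gamma$; this is essentially the assertion that the sequences $\{\tau_n^{(i)}\}$ converge in the augmented \Teich space to stable noded surfaces with a single node at $\gamma$, on which all other essential curves have definite positive length.
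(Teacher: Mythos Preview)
Your approach is essentially the paper's: both arguments produce two sequences in $\T(\Sphere,f^{-1}(P_f))$ converging to the Thurston boundary point $\langle\gamma,\cdot\rangle$, where $\gamma$ encloses one point of $P_f$ and one point of $f^{-1}(P_f)\sm P_f$, while the $i_*$-images have distinct limits in $\T_f$. The paper starts from a single marked complex structure and slides the $P_f$-point toward the extra point along one path and the extra point toward the $P_f$-point along the other; you instead fix two distinct points $\tau',\tau''\in\T_f$ in advance and push $q$ into $p_0$ in both sequences, so that the $i_*$-images are constant and no limit computation in $\T_f$ is needed. This is a mild and arguably cleaner variant of the same idea.

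One step, however, is incorrectly justified. You write that $h_n^{(i)}$ ``agrees with $h^{(i)}$ on $P_f$ and is therefore isotopic to $h^{(i)}$ rel $P_f$.'' Agreement on a finite set does not imply isotopy relative to that set: any Dehn twist about a curve in $\Sphere\sm P_f$ fixes $P_f$ pointwise but is not isotopic to the identity rel $P_f$. You must build $h_n^{(i)}$ so that its isotopy class rel $P_f$ is controlled from the outset---for instance, set $h_n^{(i)}=\psi_n\circ h^{(i)}$ with $\psi_n$ a diffeomorphism of $\P$ isotopic to the identity rel $h^{(i)}(P_f)$ that carries $h^{(i)}(q)$ to $1/n$ and moves the remaining extra marked points to your chosen fixed positions. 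With this construction $i_*(\tau_n^{(i)})=\langle h^{(i)}\rangle$ genuinely holds, $h_n^{(i)}(\gamma)$ still bounds a disk containing exactly $\{0,1/n\}$, and the rest of your argument (including the appeal to Proposition~\ref{prop:convtb}) goes through unchanged.
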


\begin{proof}
Take a simple closed curve $\gamma$ in $\Sphere \sm f^{-1}(P_f)$ that separates two points $A$ and $B$ of $f^{-1}(P_f)$  such that $A \in P_f$ and $B \notin P_f$ from all other points of $f^{-1}(P_f)$. Connect $A$ and $B$ by a simple path $\delta$ that does not intersect $\gamma$. Fix a complex structure $\tau$ on $\Sphere \sm f^{-1}(P_f)$ and start moving $A$ towards $B$ along $\delta$. The obtained path $\delta_1$ in $\T(\Sphere,f^{-1}(P_f))$ tends to the point on the Thurston boundary defined by $\langle \gamma, \cdot \rangle$ by Proposition~\ref{prop:convtb}. Indeed, the length of any curve $\alpha$ that has zero intersection number with $\gamma$ is bounded below by the length of $\alpha$ on the Riemann surface obtained from $\tau$ by filling in the puncture at $A$, and the length of $\gamma$ clearly tends to 0.

 If we depart from the same initial complex structure and start moving $B$ to $A$ along the path $\delta$, we get a new path $\delta_2$ in $\T(\Sphere,f^{-1}(P_f))$ with the same limit. It is clear that the limits of $i_*(\delta_1)$ and $i_*(\delta_2)$ are different in $\T_f$.
\qed\end{proof}

The previous proposition is the moral reason why $\sigma_f$ can not be extended to the Thurston boundary. But since the image of $g^*$ is by far not the whole of $\T(\Sphere,f^{-1}(P_f))$ (it can not have dimension greater than the dimension of $\T_f$), we have to say a little bit more. If we assume that $\sigma_f$ extends continuously  to the Thurston boundary, we get the following  necessary condition on $f$.

\begin{proposition} Suppose that $\sigma_f$ extends continuously  to the Thurston boundary. If for some essential simple closed curve $\gamma$ in $\Sphere \sm P_f$, all $f$-preimages of $\gamma$ are non-essential, then $\sigma_f$ is constant on the stratum $\S_{\{\gamma\}}$.
\label{prop:necessarycond}
\end{proposition}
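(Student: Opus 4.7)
The plan is to exploit the fact that every sequence in $\T_f$ that tends in $\AT_f$ to some point of $\S_{\{\gamma\}}$ also tends, in the Thurston compactification, to the \emph{same} projective foliation $[\langle\gamma,\cdot\rangle]$. If $\sigma_f$ extends continuously to that compactification, then the images of all such sequences must share a common limit there, and this will force the value of $\sigma_f$ at any point of $\S_{\{\gamma\}}$ to be uniquely determined.

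First I would observe that since every $f$-preimage of $\gamma$ is non-essential, the multicurve $f^{-1}(\{\gamma\})$ is empty, so Proposition~\ref{prop:invariantstrata} gives $\sigma_f(\S_{\{\gamma\}}) \subset \S_\emptyset = \T_f$; in particular, the images to be compared are interior points. Next, fix two points $\tau,\tau'\in\S_{\{\gamma\}}$ and choose sequences $\{\tau_n\},\{\tau'_n\}\subset\T_f$ converging in $\AT_f$ to $\tau$ and $\tau'$ respectively. By Proposition~\ref{prop:embedding}, this convergence means that $l(\gamma,\tau_n),\ l(\gamma,\tau'_n)\to 0$, while $l(\delta,\tau_n)$ and $l(\delta,\tau'_n)$ are bounded away from $0$ uniformly in $n$ for every $\delta$ not homotopic to $\gamma$. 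Proposition~\ref{prop:convtb} then shows that both sequences converge, in the Thurston compactification, to the single point $[\langle\gamma,\cdot\rangle]$.

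Now I would apply Theorem~\ref{thm:Continuity} on the $\AT_f$ side to get $\sigma_f(\tau_n)\to\sigma_f(\tau)$ and $\sigma_f(\tau'_n)\to\sigma_f(\tau')$, with both limits lying in $\T_f$ by the first step. Since $\T_f$ sits in both $\AT_f$ and in the Thurston compactification carrying the same subspace topology, these convergences persist in the Thurston compactification. On the other hand, the continuity hypothesis on the Thurston boundary forces $\{\sigma_f(\tau_n)\}$ and $\{\sigma_f(\tau'_n)\}$ to share a common limit in that compactification, namely $\sigma_f([\langle\gamma,\cdot\rangle])$. Uniqueness of limits yields $\sigma_f(\tau)=\sigma_f(\tau')$, and since $\tau,\tau'$ were arbitrary, $\sigma_f$ is constant on $\S_{\{\gamma\}}$.

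The only delicate point I anticipate is the interchange between the two compactifications: one must note that both restrict to the standard topology on $\T_f$, so that convergence in $\AT_f$ of $\sigma_f(\tau_n)$ to an interior point transfers to convergence in the Thurston compactification to the same point. This is routine but worth flagging, since globally the two compactifications are otherwise unrelated.
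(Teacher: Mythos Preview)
Your argument is correct and follows essentially the same route as the paper: use Proposition~\ref{prop:invariantstrata} to land $\sigma_f(\S_{\{\gamma\}})$ in $\T_f$, observe via Proposition~\ref{prop:convtb} that every approach sequence from $\T_f$ to a point of $\S_{\{\gamma\}}$ converges in the Thurston compactification to the single point $[\langle\gamma,\cdot\rangle]$, and conclude by continuity of the two extensions. The paper compresses your two-point comparison into the single line $\sigma_f(\tau)=\lim\sigma_f(\tau_n)=\sigma_f(\langle\gamma,\cdot\rangle)$, but the content is the same. One small remark: Proposition~\ref{prop:convtb} asks for an $\eps$ uniform over all $\delta\neq\gamma$, not just uniform in $n$; this does hold for sequences converging in $\AT_f$ to a point of $\S_{\{\gamma\}}$ (short curves on $\tau_n$ must eventually be disjoint from $\gamma$ by the Collaring Lemma, and any second pinching curve would force accumulation on a deeper stratum), and in any case you are free to \emph{choose} your approximating sequences, e.g.\ via a plumbing family, so this is not a genuine gap.
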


\begin{proof}
 By Proposition~\ref{prop:convtb}, any sequence $\{\tau_n\} \in \T_f$ that converges to a point on $\S_{\{\gamma\}}$ in the augmented \Teich space also converges to  $ \langle \gamma,\cdot\rangle$ in the Thurston compactification. Since the stratum $\S_{\{\gamma\}}$ is mapped into $\T_f$ by Proposition~\ref{prop:invariantstrata}, we see that $\sigma_f$ must be constant on it. Indeed, for any $\tau \in \S_{\{\gamma\}}$, we can consider a sequence $\{\tau_n\}\in\T_f$ converging to $\tau$. We get $\sigma_f(\tau)=\lim \sigma_f(\tau_n) =\sigma_f( \langle \gamma,\cdot\rangle)$. \qed
\end{proof}

We conclude by constructing explicit examples where the previous condition is violated.

\begin{theorem} 
\label{thm:noextension}
There exist Thurston maps $f$ such that Thurston's pullback map does not extend to the Thurston boundary of the \Teich space.
\end{theorem}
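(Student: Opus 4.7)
The plan is to leverage Proposition~\ref{prop:necessarycond}: it suffices to produce a Thurston map $f$ together with an essential simple closed curve $\gamma \subset (\Sphere, P_f)$ such that every component of $f^{-1}(\gamma)$ is non-essential in $(\Sphere, P_f)$, and yet the restriction of $\sigma_f$ to the boundary stratum $\S_{\{\gamma\}}$ is non-constant. The existence of such a pair $(f,\gamma)$ immediately contradicts the necessary condition in Proposition~\ref{prop:necessarycond}, so $\sigma_f$ cannot extend continuously to the Thurston boundary.

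For $\S_{\{\gamma\}}$ to be nontrivial (of positive dimension), one needs $p_f \ge 5$, since otherwise $\gamma$ splits $P_f$ into two pairs and the stratum is a single point, making the conclusion of Proposition~\ref{prop:necessarycond} vacuous. I therefore take $p_f=5$ and choose $\gamma$ separating $P_f$ into sets $A$, $B$ with $|A|=3$ and $|B|=2$. Then $\S_{\{\gamma\}}$ is isomorphic to the product $\T(\Sphere,A\cup\{n\})\times\T(\Sphere,B\cup\{n\})$, which is one-dimensional; this provides the minimal amount of room needed for non-constancy of $\sigma_f$ along the stratum.

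To construct $f$ I would start from a simple postcritically finite rational map $f_0$ (for instance a quadratic polynomial, or $z \mapsto z^n$) whose combinatorics are easy to manipulate, and enlarge its postcritical set up to size $5$ by introducing a strategically placed pre-periodic critical point via a blow-up or capture surgery supported in a small disk chosen to avoid the preimages of $\gamma$. The design has two competing objectives: all components of $f^{-1}(\gamma)$ must be null-homotopic or bound disks containing at most one point of $P_f$ (this secures the non-essentiality condition), while simultaneously the branched covers $f^{C^1_i}$ landing on the $A$-component of the noded surface must have degree at least $2$ and carry nontrivial dependence on the complex structure of that component, so that this dependence survives the gluing into $\sigma_f(\tau)$.

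The main obstacle is reconciling these two goals, as pushing $\gamma$ into a combinatorially innocuous region tends to make the pullback on $\S_{\{\gamma\}}$ factor through a trivial Teichm\"uller space, whereas keeping the pullback rich tends to introduce essential preimages of $\gamma$. Once a candidate $f$ has been pinned down, condition $(b)$ can be checked explicitly: pick two distinct points $\tau,\tau' \in \S_{\{\gamma\}}$, follow the commutative diagram~\eqref{dg5}, and compute the cross-ratios of the pulled-back images of four chosen points of $P_f$ under $\sigmahat_1 \circ h_1$ (in the notation of Section~\ref{sec:extension}); a single pair $(\tau,\tau')$ giving different cross-ratios shows $\sigma_f(\tau) \ne \sigma_f(\tau')$ and closes the argument.
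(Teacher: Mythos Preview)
Your high-level strategy is exactly right: invoke Proposition~\ref{prop:necessarycond} by exhibiting a Thurston map and an essential curve $\gamma$ all of whose preimages are non-essential, while $\sigma_f$ is non-constant on $\S_{\{\gamma\}}$. However, your proposal does not actually carry out the construction. You explicitly flag ``the main obstacle is reconciling these two goals'' and then defer to ``once a candidate $f$ has been pinned down''; but pinning down such an $f$ \emph{is} the content of the theorem. A surgery on a small-degree polynomial aimed at reaching $p_f=5$ with the right curve behavior is plausible in principle, yet nothing in your outline guarantees that the surgered map will have both properties simultaneously, and you give no mechanism for verifying this beyond ad~hoc cross-ratio computations on a yet-unspecified example.

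The paper resolves this obstacle by a structural trick that avoids any delicate balancing. Rather than building $f$ from scratch, one starts with \emph{any} topological polynomial $(f,P_f)$ whose pullback $\sigma_f$ is non-constant, adjoins two fixed points $A,B\notin P_f$ as extra marked points to form $f'=(f,P_f\cup\{A,B\})$, and takes $\gamma$ to separate $\{A,B\}$ from $P_f$. Non-essentiality of all preimages of $\gamma$ then follows from a short topological argument (using that $\infty$ is totally invariant), and non-constancy of $\sigma_{f'}$ on $\S_{\{\gamma\}}$ is forced by a commutative diagram: the stratum $\S_{\{\gamma\}}$ is canonically $\T(\Sphere,P_f\cup\{B\})$, and $\sigma_{f'}$ restricted there projects, via forgetful maps, onto the original non-constant $\sigma_f$ on $\T(\Sphere,P_f)$. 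This is the missing idea in your attempt: enlarge the marked set of an existing map instead of engineering a new one, so that both required properties come for free from the structure rather than from a case-by-case verification.
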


\begin{proof}
 We start with a Thurston map $(f,P_f)$ which is a topological polynomial (i.e. there exists a fixed point $\infty \in P_f$ that has no $f$-preimages other than itself)  with non-constant $\sigma_f$. We may assume that $f$ has  two fixed points $A$ and $B$ outside of $P_f$ because otherwise we can create extra fixed points by applying a homotopy relative to $P_f$. Consider the Thurston map $f'=(f,P_f\cup\{A,B\})$.

Let $\gamma$ be a simple closed curve that  separates points $A$ and $B$ from $P_f$. A component $\delta$ of the $f$-preimage  of $\gamma$ is essential if and only if it also separates $A$ and $B$ from $P_f$ because the complimentary component of $\delta$ that does not contain $\infty$ can contain no marked points except $A$ and $B$. Thus, if we assume that all curves that  separate points $A$ and $B$ from $P_f$ have  essential preimages, we  easily get that $f$ is a homeomorphism. We fix such a $\gamma$ that has no essential preimages. 

Denote by $P_A:\T(\Sphere, P_f\cup\{A,B\} )\to \T(\Sphere, P_f\cup\{B\} )$ and $P_{AB}:\T(\Sphere, P_f\cup\{A,B\} )\to \T(\Sphere, P_f)$ the  canonical projections between the respective \Teich spaces. Note that $\S_{\{\gamma\}}$ is canonically isomorphic to  $\T(\Sphere, P_f\cup\{B\}$. We get the following commutative diagram. 

$$
\begin{diagram}
\node{\S_{\{\gamma\}}=\T(\Sphere, P_f\cup\{B\})} \arrow{s,l}{P_A} \arrow{e,t}{ \sigma_{f'}} \node{\T(\Sphere, P_f\cup\{A,B\})} \arrow{s,r}{P_{AB}}
\\
\node{\T(\Sphere, P_f)} \arrow{e,t}{ \sigma_{f}} \node{\T(\Sphere, P_f)}
\end{diagram}
$$

Since $\sigma_f$ is non-constant by the assumption and $P_A$ is surjective, the map $\sigma_{f'}$ is non-constant on $\S_{\{\gamma\}}$.  Proposition~\ref{prop:necessarycond} implies that $\sigma_{f'}$ can not be continuously extended to the Thurston boundary of the \Teich space.
 \qed
\end{proof}

\section{Pilgrim's conjecture}
\label{sec:Pilgrim}

The geometric description of the extension of $\sigma_f$ to the boundary of the augmented \Teich space allows us to understand fairly well what exactly happens when $f$ is obstructed. In this case, by Theorem~\ref{thm:CanonicalObstruction}  the sequence $\sigma_f^n(\tau)$ tends to $\S_{\Gamma_f}$ for all $\tau \in \T_f$, where $\Gamma_f$ is the canonical obstruction for $f$. 
Recall that the action on any invariant stratum is given by pullbacks of complex structures by a collection of maps $\sigma_{f^C}$ for all components $C$ of any surface in the stratum. As we iterate, a new complex structure on each component is obtained by pulling back an old one from the image component. The combinatorics of the process is very simple: we have a map from a finite set into itself, every component is (pre-)periodic. The whole action, therefore, can be characterized by studying cycles of components. For each component $C$ there are three cases (compare \cite[Canonical Decomposition Theorem]{P1}): the composition $F:=F^C$ of all coverings in the cycle (that is the first-return map of $C$) is one of the following:
\begin{itemize}
 \item a homeomorphism,
 \item a Thurston map with parabolic orbifold,
 \item a Thurston map with hyperbolic orbifold.
\end{itemize}

 In the first case, the map $\sigma_{F}$ acts on $\T_F$ as an element of the mapping class group. All possible Thurston maps with parabolic orbifold are described in Section~9 of \cite{DH}. We prove below that in the stratum $\S_{\Gamma_f}$ all Thurston maps with hyperbolic orbifold are not obstructed.
 
\begin{remark} We consider the first-return map $F$ to be a Thurston map in the sense of the definition given in Remark~\ref{rk:GenDef} where we take $P$ to be the set of marked points (including nodes) of $C$. Clearly, if $(F,P)$ is not obstructed then neither is $(F,P_F)$.
\end{remark}
 
 By considering $f^r$ where $r$ is the least common multiple of the lengths of all cycles of components, we can assume that all components of $C$ are fixed or prefixed. Clearly $P_f=P_{f^r}$ (which means that $\T_f=\T_{f^r}$), $\sigma_{f^r}=\sigma^r_f$, and any  $f$-invariant multicurve $\Gamma$ is $f^r$-invariant; the Thurston matrix $M_\Gamma$ for $f^r$ is the $r$-th power of the analogous Thurston matrix for $f$ (see Lemma 1.1 in \cite{DH}). Moreover, the following is immediate.
 
\begin{proposition}
$\Gamma_f=\Gamma_{f^r}$.
\end{proposition}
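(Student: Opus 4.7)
The plan is simply to prove the two inclusions $\Gamma_f \subseteq \Gamma_{f^r}$ and $\Gamma_{f^r} \subseteq \Gamma_f$ separately. The setup is clean: the paragraph preceding the proposition already records $\sigma_{f^r} = \sigma_f^r$ and $P_f = P_{f^r}$, so both canonical obstructions live on the same surface $(\Sphere, P_f)$ and a comparison is meaningful. Fix any basepoint $\tau \in \T_f$.

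The inclusion $\Gamma_f \subseteq \Gamma_{f^r}$ is immediate from the definition. If $\gamma \in \Gamma_f$, then $l(\gamma, \sigma_f^n(\tau)) \to 0$ as $n \to \infty$, and in particular the subsequence $l(\gamma, \sigma_{f^r}^n(\tau)) = l(\gamma, \sigma_f^{rn}(\tau))$ also tends to $0$, so $\gamma \in \Gamma_{f^r}$.

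For the reverse inclusion I would argue by contrapositive, and here the key input is Theorem~\ref{thm:curvebound} (Curves Degenerate or Stay Bounded) applied to $f$. Assume $\gamma \notin \Gamma_f$; Theorem~\ref{thm:curvebound} produces a constant $L = L(\tau,f) > 0$ such that $l(\gamma, \sigma_f^n(\tau)) \ge L$ for \emph{every} $n \ge 0$. Restricting to multiples of $r$ gives $l(\gamma, \sigma_{f^r}^n(\tau)) = l(\gamma, \sigma_f^{rn}(\tau)) \ge L$ for every $n$, whence $\gamma \notin \Gamma_{f^r}$. This yields $\Gamma_{f^r} \subseteq \Gamma_f$, finishing the proof.

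The main conceptual subtlety — and essentially the only step requiring care — is that the reverse inclusion is \emph{not} automatic from the definitions alone: a priori, $\gamma \notin \Gamma_f$ only says that $l(\gamma, \sigma_f^n(\tau))$ fails to converge to $0$, which would still be compatible with a subsequence (such as the one indexed by multiples of $r$) tending to $0$. Theorem~\ref{thm:curvebound} precisely rules this out by upgrading "does not converge to $0$" into a uniform positive lower bound along the full orbit, after which the argument collapses to a trivial subsequence observation. Hence there is no real obstacle once Theorem~\ref{thm:curvebound} is in hand.
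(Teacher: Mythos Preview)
Your proof is correct, but the route for the reverse inclusion differs from the paper's. You argue by contrapositive and invoke Theorem~\ref{thm:curvebound} to upgrade ``$\gamma\notin\Gamma_f$'' to a uniform positive lower bound on $l(\gamma,\sigma_f^n(\tau))$ along the whole orbit, then restrict to multiples of $r$. The paper instead argues the inclusion $\Gamma_{f^r}\subseteq\Gamma_f$ directly and more elementarily: if $l(\gamma,\sigma_f^{nr}(\tau))\to 0$, then since $\sigma_f$ is weakly contracting one has $d_T(\tau_{nr},\tau_{nr+s})\le sD$ for $D=d_T(\tau_0,\tau_1)$ and $0\le s<r$, and the $1$-Lipschitz property of $\log l(\gamma,\cdot)$ gives $l(\gamma,\sigma_f^{nr+s}(\tau))\le e^{sD}\,l(\gamma,\sigma_f^{nr}(\tau))\to 0$, so the full sequence tends to~$0$. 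Your approach is shorter to write down but leans on the substantially deeper Theorem~\ref{thm:curvebound}; the paper's argument is self-contained, using only the basic contraction and Lipschitz facts from Section~\ref{sec:Teich}, and in particular would survive even in a context where Theorem~\ref{thm:curvebound} had not yet been established.
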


\begin{proof} Take any $\tau \in \T_f$. If $\gamma \in \Gamma_f$ then $l(\gamma, \sigma^n_f(\tau) ) \to 0$. In particular, $l(\gamma,\sigma^{nr}_f(\tau) ) = l(\gamma,\sigma^{n}_{f^r}(\tau) )\to 0$, hence $\gamma \in \Gamma_{f^n}$. 

Set $D=d_T(\tau,\sigma_f(\tau))$. Since $\sigma_f$ is weakly contracting and $\log l(\cdot, \gamma)$ is 1-Lipschitz, we have $l (\gamma,\sigma^{nr+s}_f(\tau)) \le e^{Ds} l( \gamma,\sigma^{nr}_f(\tau))$. Therefore, if $l( \gamma,\sigma^{nr}_f(\tau)) \to 0$ then  $l( \gamma,\sigma^n_f(\tau)) \to 0$ as well.
\qed\end{proof}

 Using the tools developed, we are now able to give a proof of the following conjecture from \cite{P1}.
 
\begin{theorem}
\label{thm:KevinConjecture}
  If the first-return map $F$ of a periodic component $C$ of the topological surface corresponding to the stratum $\S_{\Gamma_f}$ has hyperbolic orbifold then $F$ is not obstructed and, hence, equivalent to a rational map.
\end{theorem}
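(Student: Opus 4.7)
The plan is to argue by contradiction: assume $F$ is obstructed, and derive a contradiction with the remark following Theorem~\ref{thm:curvebound}, which says that the accumulation set of $\{\pi_1(\sigma_f^n(\tau_0))\}$ in $\AM'_f$ is a compact subset of the stratum $\S_{[\Gamma_f]}$. Replacing $f$ by $f^r$ using the preceding proposition, I may assume $C$ is fixed and $F=f|_C$. Under the contradiction hypothesis, Theorem~\ref{thm:CanonicalObstruction} applied to $F$ on $(C,P_F)$ (which has hyperbolic orbifold) yields a non-empty canonical obstruction $\Gamma_F$, and the corresponding statement of Theorem~\ref{thm:curvebound} for $F$ implies that every $\sigma_F$-orbit in $\T(C,P_F)$ converges to $\S_{\Gamma_F}$ in $\AT(C,P_F)$; projecting to $\M(C,P_F)$, every such orbit eventually escapes every compact subset, by accumulating on the boundary stratum $\S_{[\Gamma_F]}$.

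Next, let $K$ be the accumulation set of $\{\pi_1(\sigma_f^n(\tau_0))\}$ in $\AM'_f$. By the analogue for $\AM'_f\to\AM_f$ of the remark after Theorem~\ref{thm:curvebound}, $K$ is compact in $\S_{[\Gamma_f]}\subset\AM'_f$. The multi-valued map $\Sigma := \pi_2^{-1}\circ\tilde\sigma_f$ from $\AM'_f$ to itself is upper semi-continuous with finite values at each point and satisfies $\pi_1(\sigma_f^{n+1}(\tau_0))\in\Sigma(\pi_1(\sigma_f^n(\tau_0)))$ for every $n$. A standard compactness argument using upper semi-continuity then shows that $K$ is weakly $\Sigma$-invariant: for every $\bar c\in K$ the set $\Sigma(\bar c)\cap K$ is non-empty.

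Finally, each point of $\S_{[\Gamma_f]}\subset\AM'_f$ has a well-defined $C$-coordinate in an appropriate quotient of $\M(C,P_F)$; by the defining property of the cover $\M'_f$ from Proposition~\ref{prop:covers}(iii) (namely that $f_\tau$ depends only on $\pi_1(\tau)$), the deck transformations of $\pi_1$ do not permute the components of the stratum noded surfaces and act trivially on $C$-coordinates, so $\Sigma$ restricts on the $C$-coordinate to the single-valued descent of $\sigma_F$. Projecting $K$ to the $C$-coordinate therefore produces a compact set $K_C$ satisfying $\sigma_F(K_C)\subseteq K_C$; hence any $\sigma_F$-orbit in $K_C$ stays in $K_C$ for all time, contradicting the conclusion of the first paragraph. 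The hard part will be rigorously justifying that $\Sigma$ descends single-valuedly to $\sigma_F$ on $C$-coordinates, i.e.\ that the covering transformations of both $\pi_1$ and $\pi_2$ behave well with respect to the component structure of the boundary strata; this is where Proposition~\ref{prop:covers}(iii) plays the critical role.
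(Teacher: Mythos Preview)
Your contradiction strategy is natural, and the weak $\Sigma$-invariance of the accumulation set $K$ is correct (upper semi-continuity of $\pi_2^{-1}\circ\tilde\sigma_f$ plus compactness gives this). The gap is exactly where you flag it: the claim that $\Sigma$ descends to a \emph{single-valued} map on $C$-coordinates is not justified, and Proposition~\ref{prop:covers}(iii) does not supply it.

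The multi-valuedness of $\Sigma=\pi_2^{-1}\circ\tilde\sigma_f$ comes from $\pi_2^{-1}$, i.e.\ from the action of the \emph{full} pure mapping class group modulo $G$, not from the deck group $G$ of $\pi_1$. Lift $\bar c\in\S_{[\Gamma_f]}$ to $\hat\tau\in\S_{\Gamma_f}$; the elements of $\Sigma(\bar c)\cap\S_{[\Gamma_f]}$ are the points $\pi_1\bigl(h\cdot\sigma_f(\hat\tau)\bigr)$ for those $h$ in the pure mapping class group with $h(\Gamma_f)$ in the $G$-class of $\Gamma_f$. After adjusting by an element of $G$, such an $h$ lies in the stabilizer of $\Gamma_f$, whose restriction to $C$ surjects onto the full pure mapping class group of $(C,P_F)$; but the $C$-coordinate in $\AM'_f$ is only defined modulo the (generally proper) finite-index subgroup $G_{\Gamma_f}|_C$. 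So different $h$'s yield different $C$-coordinates in $\M'_F$, and $\Sigma$ is genuinely multi-valued there. Proposition~\ref{prop:covers}(iii) controls $f_\tau$ on fibers of $\pi_1$ and says nothing about this $\pi_2^{-1}$-ambiguity. Without single-valuedness, weak $\Sigma$-invariance of $K$ does \emph{not} produce an actual $\sigma_F$-orbit in $\T_F$ whose $\M'_F$-projection remains in $K_C$, so you cannot invoke Theorem~\ref{thm:CanonicalObstruction} for $F$ to get the contradiction.

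The paper avoids this by never iterating $\Sigma$ on the quotient. It uses the Weil--Petersson Lipschitz bound (Proposition~\ref{prop:Lipschitz}) together with the discreteness of the fibers of $\pi_2$ to find, for each $t$, an actual point $\hat\tau_k\in\S_{\Gamma_f}\subset\AT_f$ close to some $\tau_{n_k}$ with $\pi_1(\sigma_f^i(\hat\tau_k))=\hat m_i$ for all $i\le t$. This produces genuine $\sigma_F$-orbit segments in $\T_F$ of arbitrary length with prescribed $\M'_F$-projections; the metric comparison of Proposition~\ref{prop:metriccompairson} then transfers uniform bounds to the Teichm\"uller metric, and a fixed point of $\sigma_F$ is constructed directly. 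Working with honest lifts in $\AT_f$ is precisely what resolves the ambiguity your approach runs into.
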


\begin{proof}

   We start again by considering the orbit of a point $\tau_0 \in \T_f$ and denote $\tau_n=\sigma_f^n(\tau), m'_n=\pi_1(\tau_n)$, and $D=d_T(\tau_0,\tau_1)$. Theorems~\ref{thm:CanonicalObstruction} and \ref{thm:curvebound} imply that the limit set of $m'_n$ is contained in a compact subset of $\S_{[\Gamma_{f}]} \subset \AM'_f$.
  
  Any point $\th$ that lies in the stratum $\S_{\Gamma_f}$ can be represented as $\th=(\th^1,\ldots,\th^s)$ where $\th^i$ are points in the \Teich spaces corresponding to different components of the noded topological surface corresponding to the stratum; let us say that $\th^C:=\th^1$ is the coordinate corresponding to $C$, i.e. a point in $\T_F$. We similarly write $\mh=(\mh^C,\ldots,\mh^s)$ for points in $\S_{[\Gamma_f]}$. As was explained above, we may assume that $C$ is a fixed component. The action of $\sigma$ on $\S_{\Gamma_f}$ can then be written in a form $\sigma_f^i(\th)=(\sigma_F^i(\th^C),\ldots)$. For notational convenience, we assume that $\|d\sigma_{F}\|<1$; otherwise we can take an appropriate iterate of $f$ and work from there. Denote by $\pi^C,\pi_1^C,\pi_2^C$ and $\sigmat_{F}$ the maps that we get applying Proposition~\ref{prop:covers} to $F$ where $\M'_{F}$ is the ``restriction'' of $\M'_f$ to the component corresponding to $C$ (we chose  $\M'_{F}$ so that $\pi_1^C$ is a restriction of $\pi_1$; we may be able to construct a smaller covering space for $F$ that satisfies conditions of Propostion~\ref{prop:covers}).
  
    To illustrate the idea of the proof, let us first suppose that there exists an accumulation point $\tt \in \S_{\Gamma_f}$ of $\{\tau_n\}$. Then, clearly, $\tt_i:=\sigma_f^i(\tt)$ is also in the accumulation set of $\{\tau_n\}$ for all $i$. Hence, $\{\pi_1(\tt_i)\}$  is contained in a compact subset of $\S_{[\Gamma_{f}]}$. Considering the coordinate corresponding to $C$, we get that $\pi_1^C(\tt_i^C)=\pi_1^C(\sigma_F^i(\tt^C))$ lie in a compact subset of $\M'_{F}$. Theorem~\ref{thm:CanonicalObstruction} would imply that $F$ is not obstructed. In the rest of the proof we will remove the assumption we made. For each $t\in\N$ we find a point $\th_k \in \S_{\Gamma_{f}}$ such that the first $t$ elements of the sequence $\{\pi_1(\sigma_f^i(\th_k))\}$ coincide with the corresponding elements of some fixed sequence contained in a compact subset of $\M'_{f}$. From this we will be able to conclude the proof.
  
  Choose a sequence $\{n_k^1\}$ so that $m'_{n_k^1} \to \mh_0 \in \S_{[\Gamma_{f}]}$, then choose a subsequence $\{n_k^2\}$ of $\{n_k^1\}$ so that $m'_{n_k^2+1} \to \mh_1$ and so on, so that  $m'_{n_k^i+j} \to \mh_j$ for all $0 \le j \le i$. Then the diagonal subsequence $\{n_k:=n_k^k\}$ satisfies $m'_{n_k^k+i} \to \mh_i$ for all $i=\overline{0,\infty}$. The commutative diagram~(\ref{diag:covers}) implies $\sigmat_f(m'_{n_k+i})= \pi_2(m'_{n_k+i+1})$ so by continuity (Theorem~\ref{thm:extensionA}) we get
\begin{equation}
	\sigmat_f(\mh_i)= \pi_2(\mh_{i+1}).
\label{eq:2}
\end{equation} 

  Suppose $d_{WP}(m'_{n_k},\mh_0)<\eps$ and $d_{WP}(m'_{n_k+1},\mh_1)<\eps$  for some $k$, where $\eps>0$ is small. Then there exists a point $\th_k \in \S_{\Gamma_f}$ such that $d_{WP}(\tau_{n_k},\th_k)=d_{WP}(m'_{n_k},\mh_0)<\eps$ and $\pi_1(\th_k)=\mh_0$. Then Proposition~\ref{prop:Lipschitz} implies 
 \begin{align*}
d_{WP}(\mh_1,\pi_1(\sigma_f(\th_k))) \le 
   &  d_{WP}(m'_{n_k+1},\pi_1(\sigma_f(\th_k))) + d_{WP}(\mh_1,m'_{n_k+1}) \le
    \\
  & \le d_{WP}(\tau_{n_k+1},\sigma_f(\th_k)) + \eps \le (\sqrt{d_f}+1)\eps.
\end{align*}
  
 Since $\pi(\sigma_f(\th_k))=\sigmat_f(\mh_0)=\pi_2(\mh_1)$ we have that $\pi_1(\sigma_f(\th_k))$ lies in the fiber $\pi_2^{-1}(\pi_2(\mh_1))$. 
Since $\pi_2^{-1}(\pi_2(\mh_1))$ is finite in $\M'_f$ there exists a positive constant $c_1$ such that the Weil-Petersson distance between any two different points in the fiber is at least $c_1$. Similarly, set $c_i$ to be the minimal distance  between any two different points in the fiber $\pi_2^{-1}(\pi_2(\mh_i))$ for all $i=\overline{1,\infty}$.  We conclude that $\pi_1(\sigma_f(\th_k))=\mh_1$ if $\eps \le c_1/(\sqrt{d_f}+1)$.

For any positive integer $t$, we set $\eps(t)=\min_{i=\overline{1,t}} \{c_i\}/(\sqrt{d_f}+1)$ and chose $k=k(t)$ large enough so that $d_{WP}(m'_{n_k+i},\mh_i)<\eps(t)$ for all $i=\overline{1,t}$. 
Then the same reasoning yields
\begin{equation}
\pi_1(\sigma_f^i(\th_k))=\mh_i
\label{eq:1}
\end{equation}
 for all $i=\overline{1,t}$.

 By the first part of Proposition~\ref{prop:metriccompairson} $d_{WP}(\tau_{n_k},\tau_{n_k+1})\le C^0 d_{T}(\tau_{n_k},\tau_{n_k+1})$ and therefore
 \begin{align*}
	d_{WP}(\th_k,\sigma_f(\th_k)) &\le
	d_{WP}(\th_k,\tau_{n_k}) + d_{WP}(\tau_{n_k},\tau_{n_k+1})  + d_{WP}(\tau_{n_k+1},\sigma_f(\th_k)) \le \\
	&\le \eps(t) + C^0\cdot D + \sqrt{d_f}\eps(t) \le c_1+C^0\cdot D.
 \end{align*} 
 is bounded independently of $k$. The length $L$ of the shortest simple closed geodesic  on the Riemann surface corresponding to $\th^C_k$ is the same for all $k$ since $\pi(\th_k)=\pi_2(\mh_0)$ does not depend on $k$. Using the second part of Proposition~\ref{prop:metriccompairson} 
 we obtain 
 \begin{align*}
 &d_{T}(\th_k^C,\sigma_{F}(\th_k^C)) \le C^1(L) d_{WP}(\th_k^C,\sigma_{F}(\th_k^C))\le \\ &\le  C^1(L) d_{WP}(\th_k,\sigma_f(\th_k))\le C^1(L)(c_1+C^0 \cdot D)=:D_1
 \end{align*}
  where $D_1$ is a constant (note that the first two distances are measured in $\T_F$; the second inequality follows from the definition of the Weil-Petersson metric on the boundary, see Section~\ref{sec:Aug}).  Then (\ref{eq:1}) implies $\pi_1^C(\sigma_{F}^i(\th_k^C))=\mh_i^C$ for $i=\overline{1,t}$, thus this sequence lies in a compact subset of $\M'_{F}$. From the fact that $\sigma_{F}$ is uniformly contracting on the $\pi_1^C$-preimage of any compact set in $\M'_{F}$, it follows that $d_T(\mh_i^C,\mh_{i+1}^C) \le d_T(\sigma_{F}^i(\th_k^C),\sigma_{F}^{i+1}(\th_k^C)) \le q^i D_1$ for some $q<1$ and all $i=\overline{1,t}$. 
 
 Since the bound $D_1$ does not depend on $k$, the inequality $d_T(\mh_i^C,\mh_{i+1}^C) \le q^i D_1$ follows for all $i$ and, hence, $\{\mh_i^C\}$ converges to some  $\mh^C \in \M'_{F}$. It follows from (\ref{eq:2}) that $\sigmat_{F}(\mh^C)=\pi_2^C(\mh^C)$ since all the maps involved are continuous. This means that for any point $\tau$ in the fiber $(\pi_1^C)^{-1}(\mh^C)$ both $\tau$ and $\sigma_{F}(\tau)$ lie in $A=(\pi^C)^{-1}(\pi_2^C(\mh^C))$. Let $R>0$ be the lower bound on the \Teich distance in $\T_F$ between distinct points in the fiber $A$.  
 
 Choose $t$ and $\eps$ such that $2\eps + q^tD_1<R$ and $d_{T}(\mh^C,\mh^C_t) < \eps$; set $k=k(t)$. 
 Since $\pi_1^C(\sigma_{F}^t(\th_k^C))= \mh^C_t$ we can find a point $\th^C \in \T_{F}$ such that $\pi_1^C(\th^C)=\mh^C$ and $d_{T}(\th^C,\sigma_{F}^t(\th_k^C))=d_{T}(\mh^C,\mh^C_t) < \eps$.
 Then 
 \begin{align*}
 &d_T(\th^C,\sigma_{F}(\th^C)) \le \\ \le
 d_T(\th^C,\sigma_{F}^t(\th_k^C)) + &d_T(\sigma_{F}^t(\th_k^C),\sigma_{F}^{t+1}(\th_k^C))  + d_T(\sigma_{F}^{t+1}(\th_k^C),\sigma_{F}(\th^C)) \le \\
 &\le \eps + q^tD_1 + \eps < R.
\end{align*}
 Since both $\th^C$ and $\sigma_{F}(\th^C)$ belong to $A$, it follows that $\sigma_{F}(\th^C)=\th^C$ yielding that $F$ is equivalent to a rational map.
\qed\end{proof}

As a corollary we have the following

\begin{theorem}
\label{thm:fixedpts}
If the first-return maps of all periodic components of the topological surface corresponding to the stratum $\S_{\Gamma_f}$ have hyperbolic orbifolds, then
 $\sigma_f$ has a unique fixed point $\th$ in this stratum, and the orbit of any point in $\T_f$ converges to $\th$.

\end{theorem}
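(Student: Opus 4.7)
The plan is to construct $\th$ component by component using Theorem~\ref{thm:KevinConjecture}, and then to upgrade accumulation of the orbit in the compact cover $\AM'_f$ to genuine convergence in $\AT_f$. By hypothesis, for each periodic component $C$ the first-return map $F = F^C$ has hyperbolic orbifold, so Theorem~\ref{thm:KevinConjecture} gives that $F$ is not obstructed and Proposition~\ref{prop:fixedpts} supplies a fixed point $\th^C\in\T_F$ of $\sigma_F$; the weak contraction of $\sigma_F$ noted at the end of Section~\ref{sec:Teich} makes this fixed point unique in $\T_F$. For a full cycle $C_0\to C_1\to\dots\to C_{k-1}\to C_0$, once $\th^{C_0}$ is chosen as the fixed point on $C_0$, the coordinates on $C_1,\ldots,C_{k-1}$ are forced by successive pullbacks, and the resulting configuration is $\sigma_f$-invariant; for pre-periodic components we propagate backwards, defining $\th^C$ inductively as the pullback of $\th^{f(C)}$ along the tail. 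This produces $\th\in\S_{\Gamma_f}$ fixed by $\sigma_f$, and uniqueness is immediate, since any fixed point must agree with $\th$ on each periodic cycle and then on each pre-periodic tail.

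For the convergence statement, fix $\tau\in\T_f$ and set $\tau_n=\sigma_f^n(\tau)$. By Proposition~\ref{prop:embedding} it suffices to prove $l(\gamma,\tau_n)\to l(\gamma,\th)$ for every homotopy class $\gamma$ of essential simple closed curves. When $\gamma\in\Gamma_f$, Theorem~\ref{thm:CanonicalObstruction} gives $l(\gamma,\tau_n)\to 0 = l(\gamma,\th)$; when $\gamma$ has positive intersection with some $\delta\in\Gamma_f$, the Collaring Lemma together with $l(\delta,\tau_n)\to 0$ forces $l(\gamma,\tau_n)\to\infty = l(\gamma,\th)$. The principal case is that $\gamma$ is disjoint from $\Gamma_f$ and therefore lives on a unique component $C$ of the noded surface of $\S_{\Gamma_f}$.

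For this remaining case I would re-invoke the mechanism developed in the proof of Theorem~\ref{thm:KevinConjecture}. Theorem~\ref{thm:curvebound} confines $m'_n=\pi_1(\tau_n)$ to a compact subset of $\S_{[\Gamma_f]}\subset\AM'_f$, and the diagonal-subsequence and fibre-separation argument there shows that every accumulation point in $\AM'_{F^C}$ of the sequence of $C$-coordinates must be a fixed point of $\sigmat_{F^C}$. Combined with the uniqueness established above, this identifies the only possible accumulation point as $\pi_1^C(\th^C)$, so the whole sequence converges there. Using the strict contraction of a high iterate of $\sigma_{F^C}$ on $\pi_1^C$-preimages of compact sets of $\M'_{F^C}$ (available because $F^C$ has hyperbolic orbifold), this upgrades to convergence of the $C$-coordinate of $\tau_n$ to $\th^C$, yielding $l(\gamma,\tau_n)\to l(\gamma,\th^C)$ for $\gamma$ on a periodic component. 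Convergence on pre-periodic components then propagates automatically from the continuity of $\sigma_f$ on $\AT_f$ (Theorem~\ref{thm:Continuity}), which turns the limit on the target component into the limit on the tail via pullback.

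The main obstacle will be the last step: promoting the accumulation-point statement for the projected orbit in the compact cover $\AM'_f$ to genuine convergence of $\{\tau_n\}$ in $\AT_f$, which is neither compact nor locally compact. Controlling the pure-mapping-class-group ambiguity in the lifts requires using the strict Teichm\"uller contraction on each periodic cycle, while simultaneously the pre-periodic tails must be handled without losing control of lengths. The interplay between the contraction on periodic cycles and the merely continuous (non-contracting) pullback on the tails is the technical heart of the argument.
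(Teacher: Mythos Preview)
Your construction of $\th$ and its uniqueness are fine and essentially match the paper, which obtains $\th$ by iterating $\sigma_f$ from an arbitrary point of $\S_{\Gamma_f}$ and invoking convergence on each periodic component; your direct cycle-by-cycle and then tail-by-tail construction is equivalent.

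The convergence half has a genuine gap. When you write ``this upgrades to convergence of the $C$-coordinate of $\tau_n$ to $\th^C$'' you are treating $\tau_n\in\T_f$ as though it had a well-defined $C$-coordinate, but it does not: the product decomposition exists only on $\S_{\Gamma_f}$, and there is no canonical projection $\T_f\to\T_F$ to which the strict contraction of $\sigma_{F^C}$ could be applied. The contraction you want lives on the boundary stratum, while the orbit lives in the interior, and the bridge between the two (the points $\th_k$ from the proof of Theorem~\ref{thm:KevinConjecture}) only gives control along a subsequence, with the Weil--Petersson Lipschitz constant $\sqrt{d_f}>1$ working against you when you try to propagate forward. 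Your closing paragraph correctly names this obstacle but does not dissolve it.

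The paper's argument sidesteps this entirely. It extracts from the proof of Theorem~\ref{thm:KevinConjecture} that $\th$ lies in the accumulation set of $\{\tau_n\}$ in $\AT_f$, and then observes that $\th$ is \emph{weakly attracting} in the sense of Section~\ref{sec:strata}: there is a nested system of $\sigma_f$-invariant neighborhoods shrinking to $\{\th\}$, obtained by combining the invariant neighborhoods of $\S_{\Gamma_f}$ from Proposition~\ref{prop:attracting} with the attraction to $\th$ inside the stratum. Once the orbit enters any such neighborhood it never leaves, so accumulation at $\th$ forces convergence to $\th$. This is the missing ingredient; your length-function strategy via Proposition~\ref{prop:embedding} can be salvaged, but only after you already know $\tau_n\to\th$, which is precisely what the weakly-attracting argument delivers.
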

  
\begin{proof} Take any point $\th_0$ in $\S_{\Gamma_f}$ and consider its forward orbit. Since all $F^C$ are not obstructed, the sequence will tend to a limit $\th$ in $\S_{\Gamma_f}$ which    is a fixed point by Theorem~\ref{thm:extensionA}. Indeed, if $C$ is a fixed component then we deal with Thurston's pullback map for a branched cover $F^C$ which has hyperbolic orbifold and is not obstructed. Therefore the coordinate corresponding to this component converges to the unique fixed point of $F^C$. If $C$ is in a cycle of components of length $n$, then by the same argument the coordinate corresponding to $C$ for the sequence $\{\sigma_f^{nk+i}(\th_0)\}$ converges to the unique fixed point of $F^C$ for any given $i$ as $k$ goes to infinity. Thus the coordinate corresponding to $C$ converges for the whole sequence $\sigma_f^{k}(\th_0)$.
Convergence of coordinates corresponding to pre-periodic components follows then from continuity of Thurston's pullback map.

To see that every orbit in $\T_f$ converges to $\th$, note that from the proof of the previous theorem, it follows that $\th$ is in the limit set of any orbit in $\T_f$. On the other hand, it is easy to see that $\th$ is weakly attracting in the sense of the definition given in Section~\ref{sec:strata}, therefore the orbit must converge to $\th$.
\qed\end{proof}

\begin{acknowledgements}
This article is an extended and completely reworked version of the research announcement \cite{S}. I thank all the people who helped and motivated me, including Dima Dudko, Adam Epstein, Alan Huckleberry, Sarah Koch, Daniel Meyer, Kevin Pilgrim, Dierk Schleicher. I thank the anonymous referee for many useful comments. This research is financially supported by Deutsche Forschungsgemeinschaft. I also thank the German-Israeli foundation and CODY network for additional support.
\end{acknowledgements}

\bibliographystyle{amsplain}
\bibliography{selinger}

\end{document}